\newcommand{\quotes}[1]{\lq#1\rq}
\newcommand{\norm}[1]{\lVert #1 \rVert}
\newcommand{\br}{\mathbb{R}}
\newcommand{\cd}{\mathcal{D}}
\DeclareMathOperator{\summation}{sum}
\newcommand{\abs}[1]{\lvert#1\rvert}
\newcommand{\dsigma}{\mathrm{d}\sigma}
\newcommand{\domega}{\mathrm{d}\omega}
\newcommand{\dmu}{\mathrm{d}\mu}
\newcommand{\cdotroomy}{\,\cdot\,}
\newcommand{\cq}{\mathcal{Q}}
\newcommand{\angles}[1]{\langle #1 \rangle}
\theoremstyle{plain}
\newtheorem{theorem}{Theorem}[section]
\newtheorem{lemma}[theorem]{Lemma}
\newtheorem{proposition}[theorem]{Proposition}
\newtheorem{corollary}[theorem]{Corollary}
\theoremstyle{definition}
\theoremstyle{remark}
\newtheorem*{remark}{Remark}
\numberwithin{equation}{section}
\begin{document}
\title[Two-weight norm inequalities]{On two-weight norm inequalities for positive dyadic operators}

\author{Timo S. H\"anninen}
\address{Department of Mathematics and Statistics, University of Helsinki, P.O. Box 68, FI-00014 HELSINKI, FINLAND}
\email{timo.s.hanninen@helsinki.fi}
\author{Igor E. Verbitsky}
\address{Department of Mathematics, University of Missouri, Columbia, MO  65211, USA}
\email{verbitskyi@missouri.edu}

\thanks{T.S.H. is supported by the Academy of Finland through funding of his postdoctoral researcher post (Funding Decision No 297929), and by the Jenny and Antti Wihuri Foundation through covering of expenses of his visit to the University of Missouri. He is a member of the Finnish Centre of Excellence in Analysis and Dynamics Research.}

\keywords{Two-weight norm inequalities, positive dyadic operators, maximal operators, Riesz potentials, Wolff potentials, discrete Littlewood--Paley spaces}
\subjclass[2010]{42B25, 42B35, 47G40}
\begin{abstract} Let $\sigma$ and $\omega$ be locally finite Borel measures on $\br^d$, and let $p\in(1,\infty)$ and $q\in(0,\infty)$. We study the two-weight norm inequality
$$
\lVert T(f\sigma) \rVert_{L^q(\omega)}\leq C \lVert f \rVert_{L^p(\sigma)}, \quad \text{for all} \, \,  f \in L^p(\sigma), 
$$
for both the positive summation operators $T=T_\lambda(\cdotroomy \sigma)$ and positive maximal operators $T=M_\lambda(\cdotroomy \sigma)$. Here, for a family $\{\lambda_Q\}$ of non-negative reals indexed by the dyadic cubes $Q$, these operators are defined by
$$
T_\lambda(f\sigma):=\sum_Q \lambda_Q \angles{f}^\sigma_Q 1_Q \quad\text{ and } \quad M_\lambda(f\sigma):=\sup_Q \lambda_Q \angles{f}^\sigma_Q 1_Q,
$$
where $\angles{f}^\sigma_Q:=\frac{1}{\sigma(Q)} \int_Q |f| d \sigma.$
We obtain new characterizations of the two-weight norm inequalities in the following cases:
\begin{itemize}
\item For  $T=T_\lambda(\cdotroomy \sigma)$ in the subrange $q<p$. Under the additional assumption that $\sigma$ satisfies  the $A_\infty$ condition with respect to $\omega$, we characterize the inequality in terms of a simple integral condition. The proof is based on characterizing the multipliers between certain classes of Carleson measures.

\item For $T=M_\lambda(\cdotroomy \sigma)$ in the subrange $q<p$. We introduce a scale of simple conditions that depends on an integrability parameter and show that, on this scale, the sufficiency and necessity are separated only by an arbitrarily small integrability gap.

\item For the summation operators $T=T_\lambda(\cdotroomy \sigma)$ in the subrange $1<q<p$. We characterize the inequality for summation operators by means of related inequalities for maximal operators $T=M_\lambda(\cdotroomy \sigma)$. This maximal-type characterization is an alternative to the known  potential-type characterization.
\end{itemize} 
The subrange of the exponents $q<p$ 
appeared recently in applications to nonlinear elliptic PDE with $\lambda_Q = \sigma(Q) \, |Q|^{\frac{\alpha}{d}-1}$, $\alpha \in (0, d)$. In this important special case $T_\lambda$ is a discrete analogue of the Riesz potential $I_\alpha=(-\Delta)^{-\frac{\alpha}{2}}$, and 
$M_\lambda$ is the dyadic fractional maximal operator.  

\end{abstract}
\maketitle
\tableofcontents

\section*{Notation}
\begin{tabular}{c l}
$\cd$ & The collection of all the dyadic cubes $Q$ in $\br^d$.\\
$L^p(\mu)$ & The Lebesgue space with respect to a measure $\mu$, equipped with the norm \\
& $\norm{f}_{L^p(\mu)}:=(\int \abs{f}^p \dmu)^{\frac{1}{p}}.$\\
$f^{p,q}(\mu)$ & The discrete Littlewood--Paley space, equipped with the norm\\
&  $\norm{a}_{f^{p,q}(\mu)}:=\big(\int \big( \sum_Q \abs{a_Q}^q 1_Q\big)^{\frac{p}{q}} \dmu\big)^{\frac{1}{p}},  \text{ when } 0<p<\infty, \, 0<q\le \infty,$\\
&  $\norm{a}_{f^{\infty, q}(\mu)}:=  \Big( \sup_Q  \frac{1}{\mu(Q)}\sum_{R\subseteq Q} \abs{a_R}^q \, \mu(R) \Big)^{\frac{1}{q}},\text{when } p=\infty, \, 0<q<\infty$.\\
$\angles{f}^\mu_Q$ & The average of the function $|f|$ on a cube $Q$,\\
& $\angles{f}^\mu_Q:=\frac{1}{\mu(Q)} \int_Q |f| \dmu.$\\
$p'$& The H\"older conjugate $p'\in[1,\infty]$ of an exponent $p\in[1,\infty]$,\\
& $p':=\frac{p}{p-1}$.\\
$T_\lambda(\cdotroomy\sigma)$ & The dyadic summation operator, \\ 
&$T_\lambda(f\sigma):=\sum_{Q \in \cq} \lambda_Q \, \angles{f}^\sigma_Q \, 1_Q.$\\
$M_\lambda(\cdotroomy\sigma)$ & The dyadic maximal operator, \\ 
&$M_\lambda(f\sigma):=\sup_{Q \in \cq} \lambda_Q \, \angles{f}^\sigma_Q \, 1_Q.$\\
$\rho^{{\rm sum}}_Q$& The localized sum of the $T_\lambda(\cdotroomy \sigma)$  coefficients,  \\
&$\rho_Q^{{\rm sum}}:= \sum_{R\subseteq Q}\lambda_R 1_R.$\\
$\rho^{\sup}_Q$& The localized supremum of the  $M_\lambda(\cdotroomy \sigma)$ coefficients,  \\
&$\rho_Q^{\sup}:= \sup_{R\subseteq Q}\lambda_R 1_R.$\\
$\Lambda^{{\rm sum}}_{\gamma,Q}$ & The $\gamma$-average of $\rho_Q^{{\rm sum}}$, \\
&$\Lambda^{{\rm sum}}_{\gamma,Q}:=\Big(\frac{1}{\omega(Q)}\int_Q (\rho^{{\rm sum}}_Q)^\gamma\domega \Big)^{\frac{1}{\gamma}}, \, \gamma \in \br\setminus\{0\}.
$\\
$\Lambda_{Q}$ & The average of $\rho_Q^{{\rm sum}}$, \\
&$\Lambda_{Q}:=\Lambda_{Q}^{{\rm sum}}=
\frac{1}{\omega(Q)}\sum_{R\subseteq Q} \lambda_R \omega(R).
$\\
$\Lambda^{\sup}_{\gamma,Q}$ & The $\gamma$-average of $\rho_Q^{\sup}$, \\
&$\Lambda^{\sup}_{\gamma,Q}:=\Big(\frac{1}{\omega(Q)}\int_Q (\rho^{\sup}_Q)^\gamma\domega \Big)^{\frac{1}{\gamma}}, \, \gamma \in \br\setminus\{0\}.
$\\
$a^{-1}$& For a family $a:=\{a_Q\}$, the family $a^{-1}$ is defined by $a^{-1}:=\{a_Q^{-1}\}$. 
\end{tabular}
\smallskip

The least constant in the $L^p(\sigma)\to L^q(\omega)$ two-weight norm inequality 
\eqref{eq:norminequality}  
for the operator $T=T_\lambda(\cdotroomy \sigma)$ is denoted by $\norm{T_\lambda(\cdotroomy \sigma)}_{L^p(\sigma)\to L^q(\omega)}$, and the least constant for the operator $T=M_\lambda(\cdotroomy \sigma)$ by $\norm{M_\lambda(\cdotroomy \sigma)}_{L^p(\sigma)\to L^q(\omega)}$.

The uppercase letters  $P,Q,R,S$ are reserved for dyadic cubes. The  indexing \quotes{$Q\in \cd$} is abbreviated as \quotes{$Q$} in the  indexing of summations, and omitted in the  indexing of families (and similarly for the cubes $P,R,S$). 

The lowercase letters  $a,b, \ldots$ are reserved for various families $a:=\{a_Q\}$, $b:=\{b_Q\}, \ldots$ of non-negative reals, and $\lambda:=\{\lambda_Q\}$ for the fixed family of non-negative reals associated with the operators $T_\lambda(\cdotroomy \sigma)$ and $M_\lambda(\cdotroomy \sigma)$.

We follow the usual convention $\frac{0}{0}:=0$. 

The standing assumption is that $p\in(1,\infty)$, $q\in(0,\infty)$ and $q<p$. Hence only further restrictions on the exponents are mentioned.

\section{Introduction} 

Let $\sigma$ and $\omega$ be locally finite Borel measures on $\br^d$, and 
let $\lambda=\{\lambda_Q\}_{Q \in \cd}$ 
be a sequence of non-negative reals indexed by the dyadic cubes $Q \in \cd$. We study the two-weight inequalities 
\begin{equation} \label{eq:norminequality} 
\lVert T(f\sigma) \rVert_{L^q(\omega)}\leq C \lVert f \rVert_{L^p(\sigma)}, \quad \text{for all} \, \,  f \in L^p(\sigma),
\end{equation} 
in the range of the exponents $0<q<p$ and $p>1$. It is a standing assumption  throughout this article that the exponents are in this range and hence only further restrictions on the exponents are mentioned. (Here, $T$ is either the dyadic summation operator $T_\lambda(\cdotroomy \sigma)$, or the dyadic maximal operator $M_\lambda(\cdotroomy \sigma)$, both of which are defined in Notation.)

This range of exponents
appeared recently in applications to nonlinear elliptic PDE \cite{cao2017}, \cite{quinn2017}, 
\cite{verbitsky2017}; in this case $\lambda_Q = \sigma(Q) \, |Q|^{\frac{\alpha}{d}-1}$ with $\alpha \in (0, d)$ and so
$T_\lambda(f \sigma)= I^{\cd}_\alpha(f  \sigma)$ is the dyadic Riesz potential, a discrete 
analogue of the classical Riesz potential $I_\alpha=(-\Delta)^{-\frac{\alpha}{2}}$, and $M_\lambda(f \sigma)= M^{\cd}_\alpha(f  \sigma)$ is the dyadic fractional maximal operator (see \cite{adams1996}, \cite{cascante2006}, \cite{sawyer1992}, \cite{verbitsky1992}). 

Nevertheless, this range of exponents is still insufficiently understood, especially 
in the range $0<q<1<p$ for the summation operator. (The characterization in the 
case $1<q<p$ was completed recently by Tanaka \cite[Theorem 1.3]{tanaka2014}.) 

For the two-weight inequality \eqref{eq:norminequality} in its full generality, the known sufficient and necessary conditions are complicated. The conditions that characterize the inequality for maximal operators in this range are, in essence, conditions that are required to hold uniformly over all linearizations of maximal operators (see  \cite[Theorem 2]{verbitsky1992} by Verbitsky, \cite[Theorem 7.8]{hanninen2015} by H\"anninen, and \cite[Theorem 5.2]{hanninen2016}) by H\"anninen, Hyt\"onen, and Li). These conditions are described in Subsection \ref{subsection:statement_scale}. Similarly, the conditions that characterize  the inequality for summation operators in the subrange $0<q<1$ are required to hold over all possible factorizations (see \cite[Theorem 1.1. and Theorem 1.2]{hanninen2017a} by the authors).  Although these characterizations provide us with alternative viewpoints at these inequalities and offer an alternative starting point for their study, such conditions  are  difficult to verify in applications. 
 
To ameliorate this problem in the case of summation operators, we introduced earlier  a scale of conditions that depends on an integrability parameter and showed that, on this scale, the sufficiency and necessity conditions are separated by a certain  integrability gap (see \cite[Theorem 1.3]{hanninen2017a} by the authors). In this article, we now introduce an analogous scale of conditions for maximal operators and show that, on this scale, the sufficiency and necessity conditions are separated only by an arbitrarily small integrability gap (see Proposition \ref{proposition:sufficient_and_necessary_maximal} for the precise statement).

Under the additional assumption that the measures $\sigma$ and $\omega$ satisfy the $A_\infty$ condition with respect to each other, simple conditions for both summation and maximal operators are known in many ranges of exponents $p$ and $q$. In this article, we complete this picture by addressing the remaining case: the case of summation operators and the range $p\in(1,\infty)$, $q\in(0,\infty)$, and $q<p$ (see Proposition \ref{proposition:ainfty} for the precise statement). The proof is based on a characterization of multipliers of Carleson coefficients (see Proposition \ref{proposition:multipliers_carleson} for the precise statement).

Although the summation operator and supremum operator can both be viewed on the scale of vector-valued operators
$$
T_{r}(f\sigma):=\Big( \sum_{Q\in\cd} (\lambda_Q \angles{f}_Q^\sigma 1_Q)^r 
\Big)^{\frac{1}{r}}\quad r\in(0,\infty],
$$
 the characterizations of them, both the statements and the proofs, seem to be very different from each other and, to the best of the authors' knowledge, no explicit connections between the inequalities for summation and maximal operators are known. In this article, we find that, in the range $q\in (1,\infty)$, the inequality for summation operators can be characterized in terms of inequalities for related maximal operators (see Proposition \ref{proposition:connection} for the precise statement). This maximal-type condition can also be regarded as an alternative to the known potential-type condition (see  \cite[Theorem A]{cascante2006} by Cascante, Ortega, and Verbitsky, and \cite[Theorem 1.3]{tanaka2014} by Tanaka). The known potential-type condition is described in Subsection \ref{subsection:statement_connection}.

Next, we present in more detail each of our results and how they are related to the earlier results in the literature. 

\section{Statements of results}

\subsection{Scale of conditions for maximal operators}\label{subsection:statement_scale}Let $0<q<p<\infty$ and $p>1$. We study the two-weight norm inequality
\begin{equation}\label{max} 
\lVert M_\lambda(f\sigma) \rVert_{L^q(\omega)}\leq C \lVert f \rVert_{L^p(\sigma)}, \quad \text{for all} \, \,  f \in L^p(\sigma).
\end{equation} 
In the general case, the known sufficient and necessary conditions  are complicated and difficult to apply, whereas only in the limited particular cases simpler and more easily applicable conditions are known. 
For general measures $\sigma$ and $\omega$ and coefficients $\lambda$, the following complicated conditions are known:
\begin{itemize}
\item For every collection $\cq$ of dyadic cubes, we define the auxiliary function $\lambda_{\cq}$  by $$\lambda_{\cq}(x):=\inf_{Q\in \cq} \sup_{\substack{R\in\cq:\\ R\subseteq Q}} \lambda_R \, 1_R(x).$$
Inequality \eqref{max} holds if and only if there exists a constant $C>0$ such that
\begin{equation}\label{max-char} 
\int \sup_{x\in Q \in \cq} \left( \frac{\int_Q \lambda^q_{\cq}(y) \domega(y)}{\sigma(Q)}\right)^{\frac{q}{p-q}} \lambda_{\cq}^q(x)\domega(x)\leq C
\end{equation}
for all collections $\cq$ of dyadic cubes. This characterization was obtained by Verbitsky \cite[Theorem 2]{verbitsky1992}.

\item Inequality  \eqref{max} holds if and only if there exists a constant $C>0$ such that
\begin{equation}
\label{temp41}
\int \left( \sum_{Q\in\cd} \lambda_Q^q \frac{\omega(E_Q)}{\sigma(Q)}1_Q \right)^{\frac{p-q}{q}} \dsigma \leq C
\end{equation}
for all collections $\{E_Q\}$ of pairwise disjoint sets $E_Q$ such that $E_Q\subseteq Q$. This characterization was observed by H\"anninen \cite[Theorem 7.8]{hanninen2015}, and a variant of it by H\"anninen, Hyt\"onen, and Li \cite[Theorem 5.2]{hanninen2016}.
\end{itemize}
For particular measures $\sigma$ and $\omega$, or for particular coefficients $\lambda$, from these conditions the following simpler conditions follow:
\begin{itemize}
\item Assume that the coefficients $\lambda$ satisfy 
$$
\sup_{R: R\subseteq Q} \lambda_R 1_R \eqsim \lambda_Q
$$
for all dyadic cubes $Q$. This is an analogue of the so called {\it dyadic logarithmic bounded oscillation condition} (DLBO) for summation operators (see, for example, \cite{cascante2004}). 
 Then inequality \eqref{max} holds if and only if there exists a constant $C>0$ such that
$$
\int \left( \sup_{Q \in \cd}  \lambda_Q \big(\frac{\omega(Q)}{\sigma(Q)}\big)^{\frac{1}{p}} 1_Q \right)^{\frac{pq}{p-q}} \domega\leq C.
$$
\item Assume that the measures $\sigma$ and $\omega$ satisfy the $A_\infty$ condition with respect to each other and have no point masses. Then, by Corollary \ref{corollary:a_infty_disjoint}, for each collection $\{E_Q\}$ of disjoint sets with $E_Q\subseteq Q$ there exists a collection $\{F_Q\}$ of disjoint sets with $F_Q\subseteq Q$ such that 
$$
\frac{\sigma(E_Q)}{\sigma(Q)}\leq [\omega]_{A_\infty(\sigma)} \frac{\omega(F_Q)}{\omega(Q)}
$$
for all dyadic cubes $Q$, and conversely. From combining this with the condition \eqref{temp41} it follows that the two-weight norm inequality holds if and only if there exists a constant $C>0$ such that
$$
\int \left( \sup_{Q \in \cd}  \lambda_Q \big(\frac{\omega(Q)}{\sigma(Q)}\big)^{\frac{1}{q}} 1_Q \right)^\frac{pq}{p-q} \dsigma\leq C.$$
\end{itemize}

In this paper, we introduce a scale of simple conditions that depend on an integrability parameter, and prove that the necessity and sufficiency on this scale are separated only by an arbitrarily small integrability gap. For each integrability parameter $\gamma\in(-\infty,\infty)$, we define the localized auxiliary quantity $\Lambda^{\sup}_{\gamma,Q}$ by 
$$\Lambda^{\sup}_{\gamma,Q}:= \left(\frac{1}{\omega(Q)}\int_Q ( \sup_{R: R\subseteq Q} \lambda_R 1_R )^\gamma \domega \right)^{\frac{1}{\gamma}}.$$ 
Our result reads as follows:
\begin{proposition}[Scale of conditions 
for maximal operators]\label{proposition:sufficient_and_necessary_maximal}Let $p\in(1,\infty)$, $q\in(0,\infty)$, and $q<p$. The following assertions hold:
\begin{enumerate}[label=(\roman*)]
\item (Sufficient condition) We have
\begin{equation}
\label{temp40}
\norm{M_\lambda(\cdotroomy)}_{L^p(\sigma)\to L^q(\omega)}\lesssim_{p,q} \left(\int \sup_Q  \Big(\frac{\omega(Q)}{\sigma(Q)} \Big)^{\frac{q}{p-q}} \lambda_Q^q (\Lambda_{q,Q}^{\sup})^{\frac{q^2}{p-q}} 1_Q \domega\right)^{\frac{p-q}{pq}}.
\end{equation}
\item (Necessary condition) Let $\epsilon>0$ be an arbitrarily small positive real. We have
$$
 \left(\int \sup_Q \Big(\frac{\omega(Q)}{\sigma(Q)} \Big)^{\frac{q}{p-q}} (\Lambda^{\sup}_{(q-\epsilon),Q})^{\frac{pq}{p-q}} 1_Q \domega\right)^{\frac{p-q}{pq}} \lesssim_{\epsilon,p,q} \norm{M_\lambda(\cdotroomy)}_{L^p(\sigma)\to L^q(\omega)}.
$$
\end{enumerate}
\end{proposition}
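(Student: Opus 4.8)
\emph{Overall approach.} I would deduce part (i) from the known disjointification characterisation \eqref{temp41} and part (ii) from direct testing in \eqref{max}. The whole scale is organised by two facts: (a) the tautology $\int_Q(\rho_Q^{\sup})^{\gamma}\,\domega=\omega(Q)\,(\Lambda^{\sup}_{\gamma,Q})^{\gamma}$; and (b) for $\gamma>0$ and any pairwise disjoint family $\{E_R\}$ with $E_R\subseteq R$,
\[
\sum_{R\subseteq Q}\lambda_R^{\gamma}\,\omega(E_R)\ \le\ \int_Q(\rho_Q^{\sup})^{\gamma}\,\domega\ =\ \omega(Q)\,(\Lambda^{\sup}_{\gamma,Q})^{\gamma},
\]
since $\rho_Q^{\sup}\ge\lambda_R$ on $E_R$ whenever $R\subseteq Q$. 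Taking $E_Q=Q$ gives $\Lambda^{\sup}_{\gamma,Q}\ge\lambda_Q$, and $\gamma\mapsto\Lambda^{\sup}_{\gamma,Q}$ is nondecreasing on $(0,\infty)$ by the power--mean inequality; in particular, under the hypothesis $\rho^{\sup}_Q\eqsim\lambda_Q$ every condition on the scale collapses to the classical DLBO-type condition recalled above, so the scale is consistent and the only room between (i) and (ii) is the passage from exponent $q$ to $q-\epsilon$.

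\emph{Sufficiency (i).} Fix $0\le f\in L^p(\sigma)$ and run a Calder\'on--Zygmund stopping-time decomposition of $f$ relative to $\sigma$, producing a $\sigma$-sparse family $\mathcal G$ with $\langle f\rangle_R^\sigma\eqsim\langle f\rangle^\sigma_{\pi_{\mathcal G}(R)}$ for every $R$. Since $\lambda_R 1_R\le\rho^{\sup}_{G}$ whenever $R\subseteq G$ and the $\sigma$-averages change by bounded factors along each principal tower, one gets the pointwise bound $M_\lambda(f\sigma)(x)\lesssim\sup_{G\in\mathcal G,\,G\ni x}\rho_G^{\sup}(x)\,\langle f\rangle_G^\sigma$; estimating the supremum by the $\ell^q$-sum, integrating in $\omega$, and using (a) yields $\|M_\lambda(f\sigma)\|_{L^q(\omega)}^{q}\lesssim\sum_{G\in\mathcal G}(\langle f\rangle_G^\sigma)^{q}(\Lambda^{\sup}_{q,G})^{q}\omega(G)$. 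Because $q<p$, the right-hand side is $\|T^{\mathcal G}_\mu(f^q\sigma)\|_{L^1(\omega)}$ for the sparse summation operator with coefficients $\mu_G=(\Lambda^{\sup}_{q,G})^q$ (the range $q<1$ being handled directly by the same decomposition), and $\|T^{\mathcal G}_\mu\|_{L^{p/q}(\sigma)\to L^1(\omega)}=\big\|\sum_{G\in\mathcal G}\mu_G\tfrac{\omega(G)}{\sigma(G)}1_G\big\|_{L^{p/(p-q)}(\sigma)}$, so matters reduce to
\[
\Big\|\sum_{G\in\mathcal G}(\Lambda^{\sup}_{q,G})^{q}\tfrac{\omega(G)}{\sigma(G)}1_G\Big\|_{L^{p/(p-q)}(\sigma)}\ \lesssim\ \Big(\int\sup_Q\,\lambda_Q^{q}\,(\Lambda^{\sup}_{q,Q})^{\frac{q^2}{p-q}}\Big(\tfrac{\omega(Q)}{\sigma(Q)}\Big)^{\frac{q}{p-q}}1_Q\,\domega\Big)^{\frac{p-q}{p}}
\]
uniformly over $\sigma$-sparse $\mathcal G$. \emph{This is the step I expect to be the main obstacle in (i):} the left side carries the full local average $(\Lambda^{\sup}_{q,G})^{q}$ while the right integrand carries only $\lambda_Q^{q}$, and the inequality converts a $\sigma$-side quantity into an $\omega$-side one. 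The way to close it is to re-expand $(\Lambda^{\sup}_{q,G})^{q}\omega(G)=\int_G\sup_{R\subseteq G,\,R\ni x}\lambda_R^{q}\,\domega(x)$ and run a \emph{second} principal-cube decomposition, of the auxiliary density $Q\mapsto(\Lambda^{\sup}_{q,Q})^{q^2/(p-q)}(\omega(Q)/\sigma(Q))^{q/(p-q)}$ against $\mathcal G$, so that the supremum over \emph{all} cubes in \eqref{temp40} absorbs both the $\lambda_Q^q$ versus $(\Lambda^{\sup}_{q,Q})^q$ discrepancy and the $\omega$-overlap of the sparse cubes.

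\emph{Necessity (ii).} Fix $\epsilon\in(0,q)$ and let $\mathcal P$ consist of the maximal cubes at each dyadic level of $x\mapsto\sup_Q(\omega(Q)/\sigma(Q))^{q/(p-q)}(\Lambda^{\sup}_{(q-\epsilon),Q})^{pq/(p-q)}1_Q(x)$; with $L_P$ the pairwise disjoint pieces of the $P\in\mathcal P$ one has
\[
\int\sup_Q\Big(\tfrac{\omega(Q)}{\sigma(Q)}\Big)^{\frac q{p-q}}(\Lambda^{\sup}_{(q-\epsilon),Q})^{\frac{pq}{p-q}}1_Q\,\domega\ \eqsim\ \sum_{P\in\mathcal P}\Big(\tfrac{\omega(P)}{\sigma(P)}\Big)^{\frac q{p-q}}(\Lambda^{\sup}_{(q-\epsilon),P})^{\frac{pq}{p-q}}\,\omega(L_P).
\]
I would test \eqref{max} against $f=\sum_{P\in\mathcal P}c_P\,1_{P}$ with $c_P$ to be optimised: from the elementary bound $M_\lambda(1_P\sigma)\ge\rho^{\sup}_P 1_P$ one gets $\|M_\lambda(f\sigma)\|_{L^q(\omega)}^q\gtrsim\sum_{P}c_P^q\int_{L_P}(\rho_P^{\sup})^q\,\domega$, while, thanks to the geometric decay of the $c_P$ along principal towers of $\mathcal P$, $\|f\|_{L^p(\sigma)}$ is comparable to $(\sum_P c_P^p\sigma(P))^{1/p}$. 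Optimising over $c_P$ and rearranging recovers the displayed sum once $\int_{L_P}(\rho_P^{\sup})^q\,\domega$ is replaced by $\omega(P)(\Lambda^{\sup}_{(q-\epsilon),P})^{q-\epsilon}=\int_P(\rho_P^{\sup})^{q-\epsilon}\,\domega$. This double replacement — enlarging the disjoint piece $L_P$ to the whole cube $P$ and lowering the integrability from $q$ to $q-\epsilon$ — provides exactly the margin needed for the summation that controls the overlap of the test functions attached to distinct $P\in\mathcal P$; the $\epsilon$-loss is a Kolmogorov-type weak-type correction whose constant degenerates as $\epsilon\downarrow0$, which is why the gap, though arbitrarily small, cannot be removed. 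Carrying out this overlap estimate is the heart of part (ii).
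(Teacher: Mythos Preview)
Your route diverges from the paper's in both parts, and in (i) the divergence leaves a genuine gap.

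\textbf{Sufficiency.} You reduce via a $\sigma$-sparse family $\mathcal G$ to the estimate
\[
\Big\|\sum_{G\in\mathcal G}(\Lambda^{\sup}_{q,G})^{q}\tfrac{\omega(G)}{\sigma(G)}1_G\Big\|_{L^{p/(p-q)}(\sigma)}
\ \lesssim\
\Big(\int\sup_Q\lambda_Q^{q}(\Lambda^{\sup}_{q,Q})^{\frac{q^2}{p-q}}\big(\tfrac{\omega(Q)}{\sigma(Q)}\big)^{\frac{q}{p-q}}1_Q\,\domega\Big)^{\frac{p-q}{p}},
\]
and you correctly flag this as the obstacle. It is a real one: on the left you carry $(\Lambda^{\sup}_{q,G})^{q}$, while the right-hand integrand only carries $\lambda_Q^{q}$ in that slot; moreover your family $\mathcal G$ is $\sigma$-sparse, so the indicator $b_G=1_{\{G\in\mathcal G\}}$ lies in $f^{\infty,1}(\sigma)$, not in $f^{\infty,1}(\omega)$, and the $\omega$-Carleson structure needed to absorb it is simply absent. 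A ``second principal-cube decomposition'' of the auxiliary density does not manufacture the missing $\omega$-Carleson control, and I do not see how to close the step along your lines. The paper avoids this entirely: it never performs a stopping decomposition on $f$. Instead it dualises the maximal inequality to
\[
\int\Big(\sum_Q\lambda_Q^{q}\tfrac{\omega(Q)}{\sigma(Q)}b_Q1_Q\Big)^{\frac{p}{p-q}}\dsigma
\ \lesssim\ C^{\frac{pq}{p-q}}\|b\|_{f^{\infty,1}(\omega)}^{\frac{p}{p-q}},
\]
rewrites the left side via the Cascante--Ortega--Verbitsky equivalent expression (Lemma~\ref{lemma:eq_exp}) as
$\sum_Q\lambda_Q^{q}b_Q\omega(Q)\big(\sigma(Q)^{-1}\sum_{R\subseteq Q}\lambda_R^{q}b_R\omega(R)\big)^{q/(p-q)}$,
and then applies the $f^{1,\infty}(\omega)$--$f^{\infty,1}(\omega)$ dual pairing \emph{twice}: once on the inner sum (turning $\sum_{R\subseteq Q}\lambda_R^{q}b_R\omega(R)$ into $\|b\|_{f^{\infty,1}(\omega)}\!\int_Q(\rho_Q^{\sup})^{q}\domega$) and once on the outer sum. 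The specific shape $\lambda_Q^{q}(\Lambda^{\sup}_{q,Q})^{q^2/(p-q)}$ drops out automatically from this double duality; it is not something one has to engineer by a further decomposition.

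\textbf{Necessity.} Your testing-and-optimise scheme could in principle be pushed through, but you have left the decisive overlap estimate unwritten, and the Kolmogorov-type slack you invoke is heuristic. The paper's argument is both shorter and conceptually cleaner: it proves a \emph{self-improvement lemma} (Lemma~\ref{lemma:max_replacing}) stating that for every $\gamma\in(0,q)$ the coefficients $\lambda_Q$ in the maximal inequality may be replaced by $\Lambda^{\sup}_{\gamma,Q}$ with comparable constants. The proof is a one-line substitution $\tilde a_Q:=\sup_{R\supseteq Q}a_R$ followed by the $L^{q/\gamma}(\omega)$ Hardy--Littlewood maximal inequality --- this is exactly where the restriction $\gamma<q$ (your $\epsilon>0$) enters. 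Once that lemma is in hand, (ii) follows from the elementary necessary condition $\int\sup_Q\lambda_Q^{pq/(p-q)}(\omega(Q)/\sigma(Q))^{q/(p-q)}1_Q\,\domega\lesssim\|M_\lambda\|^{pq/(p-q)}$ (Lemma~\ref{lemma:max_necessary}), obtained by Littlewood--Paley duality and the formula \eqref{eq:dualnormformula}. No principal-cube selection, no overlap bookkeeping.
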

\begin{remark}Our condition \eqref{temp40} is sufficient in the general case. In addition, it is also necessary in the particular case where $\sup_{R: R\subseteq Q} \lambda_R 1_R \eqsim \lambda_Q$, and also in the particular case where $\sigma$ and $\omega$ are $A_\infty$ measures with respect to each other. Thus, our condition includes the above-listed earlier particular cases in which simple conditions were known. Furthermore, our sufficient condition is close to being necessary even in the general case, since the sufficient condition \eqref{temp40}  becomes necessary once the integrability parameter $q$ in the quantity $\Lambda^{\sup}_{q,Q}$ is lowered by an arbitrarily small  $\epsilon>0$.\end{remark}

\subsection{Characterization for summation operators under the $A_\infty$ assumption} In the case where the  measure $\sigma$ satisfies the $A_\infty$ condition with respect to  $\omega$, the two-weight norm inequality 
\begin{equation}\label{sum} 
\lVert T_\lambda(f\sigma) \rVert_{L^q(\omega)}\leq C \lVert f \rVert_{L^p(\sigma)}, \quad \text{for all} \, \,  f \in L^p(\sigma),
\end{equation} 
can be characterized by simple integral conditions. In this work, we use the Fujii--Wilson $A_\infty$ condition. Since the Coifman--Fefferman $A_\infty$ condition is also used in related earlier work, such as \cite{verbitsky1992}, we recall both of these conditions and their relations. The conditions are as follows:
\begin{enumerate}
\item\label{item:fw} (Fujii--Wilson) A measure $\sigma$ is said to satisfy the {\it dyadic Fujii--Wilson $A_\infty$ condition} with respect to a measure $\omega$ if there exists a constant $C$ such that, for every dyadic cube $Q$, we have $$\int \sup_{R\in\cd : R\subseteq Q} \Big(\frac{\sigma(R)}{\omega(R)} 1_R\Big) \, d \omega\leq C \sigma(Q).$$
The least such constant $C$ is called the {\it Fujii--Wilson $A_\infty$ characteristic} and denoted by $[\sigma]_{A_\infty(\omega)}$.
\item \label{item:cf} (Coifman--Fefferman) A measure $\sigma$ is said to satisfy the {\it dyadic Coifman--Fefferman $A_\infty$ condition} with respect to a measure $\omega$ if there exist $\alpha, \beta \in(0,1)$ such that for every dyadic cube and every subset $E\subseteq Q$ we have that $\omega(E)\leq \beta \omega(Q)$ implies $\sigma(E)\leq \alpha \sigma(Q)$. 
\end{enumerate}
We observe, by contraposition and by taking complement, that the Coifman--Fefferman condition is symmetric in the measures $\sigma$ and $\omega$. Some relations between the conditions are as follows:
\begin{itemize}
\item For non-doubling measures, the Coifman--Fefferman condition is in general strictly stronger than the Fujii--Wilson condition. For a proof that \eqref{item:cf} implies \eqref{item:fw}, see, for example, \cite[Proof of Lemma 2.5]{hanninen2017}. To see that measures may satisfy the Fujii--Wilson condition, but fail to satisfy the Coifman--Fefferman condition, notice that, by the Lebesgue differentiation theorem, the Coifman--Fefferman condition requires that $\sigma$ is absolutely continuous with respect to  $\omega$, whereas the Fujii--Wilson condition does not require this. Accordingly, the case with $\sigma$ being Lebesgue measure and $\omega$ a Dirac measure is an example of measures satisfying \eqref{item:fw} but not  \eqref{item:cf}. 
\item Nevertheless, the conditions are equivalent 
provided both $\omega$ and $\sigma$ are doubling \cite[Theorem 1]{fujii1978}. Moreover, the doubling properties of the measures were originally assumed in the Coifman--Fefferman condition \cite{coifman1974}. Furthermore, because the Coifman--Fefferman condition is symmetric in the measures, in the case of doubling measures, $\sigma$ satisfies the 
Fujii--Wilson $A_\infty$ condition with respect to $\omega$ if and only if  $\omega$ satisfies the same condition with respect to $\sigma$ .
\end{itemize}

Under the $A_\infty$ assumption, the following simpler (than in the general case) characterizations are known:
\begin{itemize}
\item \textit{The subrange $1<p\leq q<\infty$ for maximal and summation operators.}  H\"anninen \cite[Theorem 1.5]{hanninen2017} noticed that the two-weight norm inequality for the summation operators is characterized by testing the bilinear estimate against the indicator functions of cubes:
\begin{equation*}
\begin{split}
&\sup_Q \frac{\int 1_QT_\lambda(1_Q \sigma) \domega}{\omega(Q)^{\frac{1}{q'}}  \sigma(Q)^{\frac{1}{p}}} \\
&\lesssim \norm{T_\lambda(\cdotroomy \sigma)}_{L^p(\sigma)\to L^q(\omega)}\\
&\lesssim_{p,q} ([\sigma]_{A_\infty(\omega)}^{\frac{1}{p}}+[\omega]_{A_\infty(\sigma)}^{\frac{1}{q'}}) \sup_Q \frac{ \int 1_QT_\lambda(1_Q \sigma) \domega }{\omega(Q)^{\frac{1}{q'}} \sigma(Q)^{\frac{1}{p}}}.
\end{split}
\end{equation*}
A similar characterization 
holds for the maximal operators as well, and it can be proven, for example,  by a parallel  stopping cubes argument analogous to the argument appearing in \cite{hanninen2017}.
\item  \textit{The subrange $0<q<p$ and $p>1$ for maximal operators.} Verbitsky \cite{verbitsky1992} proved that the two-weight norm inequality for the maximal operators is characterized by a simple integral condition:
\begin{equation*}
\begin{split}
& [\omega]_{A_\infty(\sigma)}^{-\frac{1}{q}} \left( \int \big(\sup_Q \lambda_Q \big( \frac{\omega(Q)}{\sigma(Q)}\big)^{\frac{1}{q}} 1_Q \big)^{\frac{pq}{p-q}} \dsigma \right)^{\frac{p-q}{pq}} \\
&\lesssim \norm{M_\lambda(\cdotroomy \sigma)}_{L^p(\sigma)\to L^q(\omega)}\\
&\lesssim_{p,q} [\sigma]_{A_\infty(\omega)}^{-\frac{1}{q}} \left( \int \big(\sup_Q \lambda_Q \big( \frac{\omega(Q)}{\sigma(Q)}\big)^{\frac{1}{q}} 1_Q \big)^{\frac{pq}{p-q}} \dsigma \right)^{\frac{p-q}{pq}}.
\end{split}
\end{equation*}
\end{itemize}

In this paper, we address the remaining case:  \textit{The subrange  $0<q<p$ and $p>1$ for summation operators.} 
For brevity, we write
\begin{equation*}
I_{\sigma,\omega,p,q,\lambda}:= \left( \int \big(\sum_Q \lambda_Q \Big(\frac{\omega(Q)}{\sigma(Q)}\Big)^{\frac{1}{q}} 1_Q \big)^{\frac{pq}{p-q}} \dsigma \right)^\frac{p-q}{pq}
\end{equation*}
for the integral expression, whose finiteness is sufficient and necessary for  inequality \eqref{sum}:

\begin{proposition}[Characterization under the $A_\infty$ assumption]\label{proposition:ainfty}Let $\sigma$ and $\omega$ be measures that satisfy the $A_\infty$ condition with respect to each other. Let $p\in(1,\infty)$ and $q\in(0,\infty)$ be such that $q<p$. Then we have the following characterization by subranges:
\begin{itemize}
\item In the subrange $q\in(0,1]$, we have
\begin{equation*}
\begin{split}
&[\omega]_{A_\infty(\sigma)}^{-\frac{1-q}{q}} I_{\sigma,\omega,p,q,\lambda}\lesssim_{p,q} \norm{T_\lambda (\cdotroomy \sigma)}_{L^p(\sigma)\to L^q(\omega)}  \lesssim_{p,q} [\sigma]_{A_\infty(\omega)}^{\frac{1-q}{q}} I_{\sigma,\omega,p,q,\lambda}.
\end{split}
\end{equation*}

\item In the subrange $q\in(1,\infty)$, we have
\begin{equation*}
\begin{split}
&[\sigma]_{A_\infty(\omega)}^{-\frac{q-1}{q}} I_{\sigma,\omega,p,q,\lambda}
\lesssim_{p,q} \norm{T_\lambda (\cdotroomy \sigma)}_{L^p(\sigma)\to L^q(\omega)} 
 \lesssim_{p,q} [\omega]_{A_\infty(\sigma)}^{\frac{q-1}{q}} I_{\sigma,\omega,p,q,\lambda}.
\end{split}
\end{equation*}

\end{itemize}
\end{proposition}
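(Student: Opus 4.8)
The plan is to derive the two-sided estimate from the known characterizations of \eqref{sum} valid for arbitrary measures, using the $A_\infty$ hypothesis to collapse the $\sigma$-side and the $\omega$-side of those characterizations onto a single measure. The quantitative device for this collapse is Proposition~\ref{proposition:multipliers_carleson}, which evaluates the relevant factorization (Calder\'on-product / Carleson-multiplier) norm; each time one trades a local average against $\sigma$ for one against $\omega$ one pays a single factor of $[\sigma]_{A_\infty(\omega)}$ or of $[\omega]_{A_\infty(\sigma)}$, and tracking where these factors land, and with what power, is what produces the exponents $\tfrac{1-q}{q}$, resp.\ $\tfrac{q-1}{q}$, in the statement.

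For the sufficiency in the range $q\le1$ I would start from the factorization-type characterization of \eqref{sum} from \cite{hanninen2017a}: \eqref{sum} holds if and only if there is a family $a=\{a_Q\}$ for which an $L^{p'}(\sigma)$-type condition on $\lambda_Q\tfrac{\omega(Q)}{\sigma(Q)}a_Q^{-1}$ and an $L^{\frac{q}{1-q}}(\omega)$-type condition on $\sup_Q a_Q 1_Q$ both hold. Optimising over $a$ formally produces an $L^{\frac{pq}{p-q}}$-type norm of $\{\lambda_Q\}$ --- indeed $\tfrac{1}{p'}+\tfrac{1-q}{q}=\tfrac{p-q}{pq}$ --- but over a product of conditions relative to two different measures; Proposition~\ref{proposition:multipliers_carleson} makes this optimisation rigorous in its Carleson-localised form, and the $A_\infty$ hypothesis is exactly what reduces the two-measure product to the single-measure quantity $I_{\sigma,\omega,p,q,\lambda}$. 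Concretely I would take the ansatz $a_Q\eqsim\big(\tfrac{\sigma(Q)}{\omega(Q)}\big)^{\frac{1-q}{q}}$ modulated by a dyadic maximal function of $\big\{\lambda_Q(\tfrac{\omega(Q)}{\sigma(Q)})^{1/q}\big\}$, so that the $\sigma$-condition becomes the $L^{\frac{pq}{p-q}}(\sigma)$-norm appearing in $I_{\sigma,\omega,p,q,\lambda}$, while the $\omega$-condition reduces, via the Fujii--Wilson inequality for $\sigma\in A_\infty(\omega)$, to a $[\sigma]_{A_\infty(\omega)}^{(1-q)/q}$-multiple of it. For $q>1$ I would instead dualise \eqref{sum} on $L^q(\omega)$ to the bilinear estimate $\sum_Q\lambda_Q\angles{f}^\sigma_Q\angles{g}^\omega_Q\,\omega(Q)\lesssim\norm{f}_{L^p(\sigma)}\norm{g}_{L^{q'}(\omega)}$, invoke the potential-type characterization available in this range (\cite{cascante2006}, \cite{tanaka2014}), and repeat the argument with the roles of $(\sigma,p)$ and $(\omega,q')$ interchanged; this interchange is what flips the characteristic to $[\omega]_{A_\infty(\sigma)}$ and the exponent to $\tfrac{q-1}{q}$.

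For the reverse inequalities I would run the same chain backwards: assuming \eqref{sum}, the general characterization supplies an admissible factorization (resp.\ the potential-type condition), and $A_\infty$ --- used here through Corollary~\ref{corollary:a_infty_disjoint}, which transfers $\sigma$-disjoint-set ratios into $\omega$-disjoint-set ratios at the cost of $[\omega]_{A_\infty(\sigma)}$ --- allows one to discard the localisation $\rho^{{\rm sum}}_Q=\sum_{R\subseteq Q}\lambda_R1_R$ (equivalently the local sum $\Lambda_Q$), leaving only the bare coefficient $\lambda_Q$ and hence $I_{\sigma,\omega,p,q,\lambda}\lesssim[\omega]_{A_\infty(\sigma)}^{(1-q)/q}\norm{T_\lambda(\cdotroomy\sigma)}_{L^p(\sigma)\to L^q(\omega)}$, with the analogous statement for $q>1$.

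The main obstacle I anticipate is the sufficiency, together with the sharp accounting of the $A_\infty$ exponents: the factorizing family $a$ --- equivalently, the near-optimal splitting in the Carleson-multiplier problem of Proposition~\ref{proposition:multipliers_carleson} --- has to be chosen so that precisely one power of the relevant characteristic survives, and with exactly the exponent claimed rather than a worse one. The subrange $q\le1$ is the more delicate, since without $L^q(\omega)$-duality one must work throughout with quasi-Banach lattices and negative-exponent H\"older inequalities and reconcile that arithmetic with Proposition~\ref{proposition:multipliers_carleson}; the subrange $q>1$ should then come from the same scheme applied to the bilinear dual form.
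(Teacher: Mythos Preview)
Your plan is a plausible outline but it takes a detour through the external characterizations of \cite{hanninen2017a} (for $q\le1$) and the Wolff-potential characterization of \cite{cascante2006,tanaka2014} (for $q>1$), and several steps you describe are not actually carried out: the factorizing ansatz $a_Q$ is left vague (``modulated by a dyadic maximal function''), collapsing the Wolff potential $W^{p'}_{\lambda,\sigma}[\omega]$, which contains the localised sum $\Lambda_Q$, down to the bare $\lambda_Q$ under $A_\infty$ is not explained, and for the necessity you invoke Corollary~\ref{corollary:a_infty_disjoint}, which requires atomless measures and is not the right transfer mechanism here. With those gaps the sharp exponents $\tfrac{1-q}{q}$ and $\tfrac{q-1}{q}$ are not accounted for.

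The paper avoids all of this by working entirely inside the discrete Littlewood--Paley scale. The two-weight inequality is first rewritten, via the $f^{p,q}(\mu)$ dualities of Proposition~\ref{proposition:normduality} (for $q<1$) or Lemma~\ref{lemma:reformulations_summation} (for $q>1$), as a bilinear estimate
\[
\sum_Q \lambda_Q^{\,s}\, a_Q\, b_Q\, \omega(Q)\ \lesssim\ \norm{a}_{f^{r_1,\infty}(\sigma)}\,\norm{b}_{f^{r_2,t}(\omega)}
\]
with suitable exponents. The single use of $A_\infty$ is then through Proposition~\ref{proposition:multipliers_carleson} (equivalently Corollary~\ref{corollary:a_infty_via_carleson}): it converts $\norm{b}_{f^{r_2,t}(\omega)}$ into $\norm{\tilde b}_{f^{r_2,t}(\sigma)}$ at the cost of exactly one power of the relevant $A_\infty$ characteristic, and scaling that power through the $\ell^t$-index produces $\tfrac{1-q}{q}$, resp.\ $\tfrac{1}{q'}=\tfrac{q-1}{q}$. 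Once both factors live over $\sigma$, the factorization theorem for $f^{p,q}$ spaces (Proposition~\ref{proposition:factorization}) merges them into a single $f^{r,s}(\sigma)$-norm, and one more duality identifies that with $I_{\sigma,\omega,p,q,\lambda}$. This chain of equivalences handles sufficiency and necessity simultaneously and never touches the Wolff potential or an explicit choice of factorizing family.
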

\begin{remark} In the subrange $q\in(1,\infty)$, by the $L^q(\omega)-L^{q'}(\omega)$ duality, we have
$$
\norm{T_{\{\lambda_Q\}}(\cdotroomy \sigma)}_{L^p(\sigma)\to L^q(\omega)}=\norm{T_{\{\lambda_Q \frac{\omega(Q)}{\sigma(Q)}\}}(\cdotroomy \omega)}_{L^{q'}(\omega)\to L^{p'}(\sigma)}.
$$
Therefore, by Proposition \ref{proposition:ainfty}, we also have 
\begin{equation}
\begin{split}
&[\omega]_{A_\infty(\sigma)}^{-\frac{1}{p}} I^*_{\sigma,\omega,p,q,\lambda} \lesssim_{p,q} \norm{T_\lambda (\cdotroomy \sigma)}_{L^p(\sigma)\to L^q(\omega)}  \lesssim_{p,q}  [\sigma]_{A_\infty(\omega)}^{\frac{1}{p}} I^*_{\sigma,\omega,p,q,\lambda},
\end{split}
\end{equation}
where the dual integral expression $I^*_{\sigma,\omega,p,q,\lambda}$ is defined by
$$
I^*_{\sigma,\omega,p,q,\lambda}:= \left( \int \big(\sum_Q \lambda_Q \Big(\frac{\omega(Q)}{\sigma(Q)}\Big)^{\frac{1}{p}} 1_Q \big)^{\frac{pq}{p-q}} \domega \right)^{\frac{p-q}{pq}},
$$
and is related to the expression $I_{\sigma,\omega,p,q,\lambda}$ via interchanging $\lambda_Q \frac{\omega(Q)}{\sigma(Q)}$ and $\lambda_Q$, $q'$ and $p$, and $\omega$ and $\sigma$.
\end{remark}

\subsection{Inequality for summation operators via maximal operators}\label{subsection:statement_connection} In this section, we show that the two-weight norm inequality \eqref{sum} for the summation operator is equivalent to a pair of two-weight norm  inequalities for certain related maximal operators:
\begin{proposition}\label{proposition:connection}Let $1<q<p<\infty$. Let $\{\lambda_Q\}$ be non-negative reals. Then the following assertions are equivalent:
\begin{enumerate}[label=(\roman*)]
\item\label{assertion:inequality_summation} Inequality \eqref{sum}  holds, that is, 
\begin{equation}\label{temp50}
\norm{\sum_Q \lambda_Q \angles{f}^\sigma_Q 1_Q}_{L^q(\omega)}\lesssim_{p,q}   \norm{f}_{L^p(\sigma)}\quad \text{for all functions $f$.}
\end{equation}
\item\label{assertion:inequalities_maximal} The following two-weight norm  inequalities hold for the related maximal operators:
$$
\left\{
\begin{aligned}
&\norm{\sup_Q \Lambda_Q \angles{f}^\sigma_Q 1_Q}_{L^q(\omega)}\lesssim_{p,q}   \norm{f}_{L^p(\sigma)} \quad \text{for all functions $f$} ,  \\
& \norm{\sup_Q \frac{\omega(Q)}{\sigma(Q)}\Lambda_Q \angles{g}^\omega_Q 1_Q}_{L^{p'}(\sigma)}\lesssim_{p,q}   \norm{g}_{L^{q'}(\omega)} \quad \text{for all functions $g$} , 
  \end{aligned}
\right.
$$
where $\Lambda_Q= \Lambda^{{\rm sum}}_{Q}:= \frac{1}{\omega(Q)} \sum_{R\subseteq Q} \lambda_R \omega(R)$. 
\end{enumerate}
\end{proposition}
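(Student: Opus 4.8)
The plan is to prove the two implications separately, after a reduction that renders assertion~\ref{assertion:inequalities_maximal} manifestly self-dual. Since $q>1$, the $L^q(\omega)$--$L^{q'}(\omega)$ duality shows that \eqref{temp50} is equivalent to the bilinear estimate
\begin{equation*}
\sum_Q \lambda_Q\,\omega(Q)\,\angles{f}^\sigma_Q\,\angles{g}^\omega_Q \;\lesssim_{p,q}\; \norm{f}_{L^p(\sigma)}\,\norm{g}_{L^{q'}(\omega)},\qquad f,g\ge 0;
\end{equation*}
as in the Remark after Proposition~\ref{proposition:ainfty}, the left-hand side is unchanged under the substitution $(\sigma,\omega,p,q,\{\lambda_Q\})\mapsto(\omega,\sigma,q',p',\{\lambda_Q\,\omega(Q)/\sigma(Q)\})$, under which the coefficient $\Lambda_Q=\tfrac1{\omega(Q)}\sum_{R\subseteq Q}\lambda_R\omega(R)$ becomes $\tfrac1{\sigma(Q)}\sum_{R\subseteq Q}\lambda_R\omega(R)=\tfrac{\omega(Q)}{\sigma(Q)}\Lambda_Q$, so that the first inequality in \ref{assertion:inequalities_maximal} is carried to the second. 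Hence it suffices to prove: (a) \ref{assertion:inequality_summation} implies the first inequality in \ref{assertion:inequalities_maximal} (the second then follows by applying (a) to the dualized data); and (b) the two inequalities in \ref{assertion:inequalities_maximal} together imply \ref{assertion:inequality_summation}.

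\emph{Proof of (a).} Fix $f\ge 0$ and put $g:=M_\sigma^{\cd}f$, where $M_\mu^{\cd}h:=\sup_Q\angles{h}^\mu_Q\,1_Q$ denotes the dyadic maximal operator with respect to a measure $\mu$. For $R\subseteq Q$ one has $\angles{g}^\sigma_R\ge\angles{f}^\sigma_Q$, hence $T_\lambda(g\sigma)(x)\ge\angles{f}^\sigma_Q\,\rho_Q^{{\rm sum}}(x)$ for $x\in Q$. Choose any $s\in(1,q)$; by Jensen's inequality ($s\ge 1$), for $x\in Q$,
\begin{equation*}
M_\omega^{\cd}\big[T_\lambda(g\sigma)^s\big](x)\;\ge\;\big(\angles{f}^\sigma_Q\big)^s\,\frac1{\omega(Q)}\int_Q(\rho_Q^{{\rm sum}})^s\,\domega\;\ge\;\big(\Lambda_Q\,\angles{f}^\sigma_Q\big)^s,
\end{equation*}
so that $M_\omega^{\cd}[T_\lambda(g\sigma)^s]\ge\big(\sup_Q\Lambda_Q\angles{f}^\sigma_Q\,1_Q\big)^s$ pointwise. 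Since $q/s>1$, the operator $M_\omega^{\cd}$ is bounded on $L^{q/s}(\omega)$, and therefore
\begin{align*}
\norm{\textstyle\sup_Q\Lambda_Q\angles{f}^\sigma_Q\,1_Q}_{L^q(\omega)}^q
&\le\norm{M_\omega^{\cd}[T_\lambda(g\sigma)^s]}_{L^{q/s}(\omega)}^{q/s}
\;\lesssim_{p,q}\;\norm{T_\lambda(g\sigma)^s}_{L^{q/s}(\omega)}^{q/s}\\
&=\norm{T_\lambda(g\sigma)}_{L^q(\omega)}^q
\;\le\;\norm{T_\lambda(\cdotroomy \sigma)}_{L^p(\sigma)\to L^q(\omega)}^q\,\norm{M_\sigma^{\cd}f}_{L^p(\sigma)}^q,
\end{align*}
and $\norm{M_\sigma^{\cd}f}_{L^p(\sigma)}\lesssim_p\norm{f}_{L^p(\sigma)}$ because $p>1$. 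Thus both hypotheses $p>1$ (boundedness of $M_\sigma^{\cd}$ on $L^p(\sigma)$) and $q>1$ (room for $s\in(1,q)$) enter essentially.

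\emph{Proof of (b): the main point.} Here I would carry out a parallel stopping-time (corona) decomposition: let $\cf$ be the $\sigma$-principal cubes of $f$ and $\mathcal{G}$ the $\omega$-principal cubes of $g$, with stopping values $a_F:=\angles{f}^\sigma_F$, $b_G:=\angles{g}^\omega_G$, so that $\angles{f}^\sigma_Q\le 2a_{\pif(Q)}$, $\angles{g}^\omega_Q\le 2b_{\pig(Q)}$, both families are Carleson for the respective measures, and $\sum_Fa_F^p\sigma(F)\lesssim_p\norm{f}_{L^p(\sigma)}^p$, $\sum_Gb_G^{q'}\omega(G)\lesssim_q\norm{g}_{L^{q'}(\omega)}^{q'}$. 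Splitting according to whether $\pif(Q)\subseteq\pig(Q)$ or $\pig(Q)\subsetneq\pif(Q)$ gives
\begin{equation*}
\sum_Q\lambda_Q\omega(Q)\angles{f}^\sigma_Q\angles{g}^\omega_Q\;\le\;4\sum_Q\lambda_Q\omega(Q)\,a_{\pif(Q)}\,b_{\pig(Q)}\;=\;\Sigma_{\mathrm I}+\Sigma_{\mathrm{II}},
\end{equation*}
and by the self-duality from the reduction, $\Sigma_{\mathrm{II}}$ for the given data is $\Sigma_{\mathrm I}$ for the dualized data, so it is enough to estimate $\Sigma_{\mathrm I}$. For $F\in\cf$, $G\in\mathcal{G}$ with $F\subseteq G$, the cubes $Q$ with $\pif(Q)=F$ and $\pig(Q)=G$ all lie in $F$ and contribute at most $\sum_{Q\subseteq F}\lambda_Q\omega(Q)=\Lambda_F\,\omega(F)$, and such a pair occurs only when $G=\pig(F)$. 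This is precisely where the \emph{first}-moment coefficient $\Lambda_Q=\Lambda^{{\rm sum}}_{1,Q}$ appears naturally — the total $\lambda$-mass carried by the cubes below a stopping cube $F$ is exactly $\Lambda_F\,\omega(F)$ — and one is left with the estimate
\begin{equation*}
\Sigma_{\mathrm I}\;\lesssim\;\sum_{F\in\cf}a_F\,\Lambda_F\,\omega(F)\,b_{\pig(F)}\;\lesssim_{p,q}\;\norm{f}_{L^p(\sigma)}\,\norm{g}_{L^{q'}(\omega)}
\end{equation*}
(and its dual for $\Sigma_{\mathrm{II}}$), to be obtained by regrouping over the cubes $\pig(F)$, using the geometric decay of the stopping values $a_F$ up towers in $\cf$, and invoking the two maximal inequalities of \ref{assertion:inequalities_maximal} — one to absorb the $f$-part, the other the $g$-part.

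\emph{The main obstacle.} The delicate step is this last estimate: passing from $\sum_F a_F\Lambda_F\omega(F)b_{\pig(F)}$ (and its dual) to $\norm{f}_{L^p(\sigma)}\norm{g}_{L^{q'}(\omega)}$ \emph{without} enlarging $\Lambda_Q=\Lambda^{{\rm sum}}_{1,Q}$ to the higher moment $\Lambda^{{\rm sum}}_{q,Q}$. A naive application of H\"older's inequality at this stage does produce the higher moment, and hence only the weaker, ``maximal-scale'' type condition of Proposition~\ref{proposition:sufficient_and_necessary_maximal}; keeping the sharp first moment is where $q>1$ and the \emph{simultaneous} use of both inequalities of \ref{assertion:inequalities_maximal} are essential. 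A more transparent, if less economical, alternative for (b) is to invoke the potential-type characterization of \ref{assertion:inequality_summation} from \cite{cascante2006,tanaka2014} (recalled in Subsection~\ref{subsection:statement_connection}) and verify directly, using the linearization characterizations \eqref{max-char}--\eqref{temp41} of the maximal inequalities, that it is equivalent to \ref{assertion:inequalities_maximal}; I expect the direct corona argument to be the shorter of the two.
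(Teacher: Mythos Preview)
Your direction (a) is correct (and in fact $s=1$ already suffices, since $q>1$ makes $M_\omega^{\cd}$ bounded on $L^q(\omega)$; the ``room for $s\in(1,q)$'' is not essential). Direction (b), however, is not a proof: you set up a parallel corona, arrive at $\sum_{F\in\cf}a_F\,\Lambda_F\,\omega(F)\,b_{\pig(F)}$, and then explicitly label the remaining step ``the main obstacle'' without carrying it out. This is a genuine gap, and your own diagnosis is accurate --- a naive H\"older at this point inflates $\Lambda_Q$ to a higher moment, and it is not obvious how the corona route closes with only the first-moment coefficients (falling back on the potential-type characterization of \cite{cascante2006,tanaka2014} would defeat the purpose of the proposition).

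The paper sidesteps this obstacle entirely by a different and much shorter argument that proves \emph{both directions at once}. Rather than work with functions, it passes to coefficient families via Lemma~\ref{lemma:reformulations_summation}: inequality~\eqref{temp50} is \emph{equivalent} to
\[
\sum_P\lambda_P\,\omega(P)\Big(\sum_{Q\supseteq P}\tilde a_Q\Big)\Big(\sum_{R\supseteq P}\tilde b_R\Big)\;\lesssim\;\Big\lVert\sum_Q\tilde a_Q1_Q\Big\rVert_{L^p(\sigma)}\Big\lVert\sum_R\tilde b_R1_R\Big\rVert_{L^{q'}(\omega)}\quad\text{for all }\tilde a,\tilde b.
\]
Since $Q,R\supseteq P$ forces $Q\subseteq R$ or $R\subseteq Q$, the left side splits \emph{exactly} into two pieces; in the piece $P\subseteq R\subseteq Q$ one sums the innermost variable first to get $\sum_{P\subseteq R}\lambda_P\omega(P)=\Lambda_R\,\omega(R)$, leaving $\sum_R\tilde b_R\,\Lambda_R\,\omega(R)\big(\sum_{Q\supseteq R}\tilde a_Q\big)$. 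By the $f^{q',1}(\omega)$--$f^{q,\infty}(\omega)$ duality (Proposition~\ref{proposition:normduality}) this is equivalent to $\lVert\sup_R\Lambda_R(\sum_{Q\supseteq R}\tilde a_Q)1_R\rVert_{L^q(\omega)}\lesssim\lVert\sum_Q\tilde a_Q1_Q\rVert_{L^p(\sigma)}$, which by Lemma~\ref{lemma:reformulations_supremum} is exactly the first maximal inequality in \ref{assertion:inequalities_maximal}; the other piece gives the second maximal inequality symmetrically. The Hardy--Littlewood maximal inequality is used once, inside the reformulation lemmas (via the substitutions $a_Q=\angles{f}^\sigma_Q$ and the telescoping $\tilde a_Q=\sup_{S\supseteq Q}a_S-\sup_{S\supseteq\hat Q}a_S$), after which the equivalence becomes a purely algebraic splitting --- no corona decomposition, no delicate step.
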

In this range $1<q<p$, inequality \eqref{temp50}  for the summation operator can also be characterized by the following two potential-type conditions:
\begin{equation}\label{eq:wolff_potential_condition}
\int \big(W^{p'}_{\lambda,\sigma}[\omega]\big)^{\frac{(p-1)q}{p-q}} \domega < \infty\quad \text{ and } \quad \int \big(W^{q}_{\lambda,\omega}[\sigma]\big)^{\frac{(q'-1)p'}{q'-p'}} \dsigma < \infty .
\end{equation}
The necessity  of \eqref{eq:wolff_potential_condition} for \eqref{temp50} follows from the results of Cascante, Ortega, and Verbitsky
 \cite[Theorem 2.1]{cascante2006}, and the sufficiency was established later by Tanaka \cite[Theorem 1.3]{tanaka2014}. 
Here, the {\it discrete Wolff potential} $W^{p'}_{\lambda,\sigma}[\omega]$ associated with the summation operator $T_{\lambda}(\cdotroomy \sigma):L^p(\sigma)\to L^q(\omega)$ is defined  by
\begin{equation*}\label{eq:def:wolfpotential}
W^{p'}_{\lambda,\sigma}[\omega]:= \sum_Q 1_Q \lambda_Q \Big(\frac{\omega(Q)}{\sigma(Q)}\Big)^{p'-1} (\Lambda_{Q})^{p'-1},
\end{equation*}
and the dual Wolff potential $W^{q}_{\lambda,\omega}[\sigma]$ is the discrete Wolff potential associated with the adjoint operator $T_{\{\lambda_Q \frac{\omega(Q)}{\sigma(Q)}\}}(\cdotroomy \omega):L^{q'}(\omega)\to L^{p'}(\sigma)$. (Hence, $W^{q}_{\lambda,\omega}[\sigma]$ has an expression similar to $W^{p'}_{\lambda,\sigma}[\omega]$, but with $\lambda_Q$ replaced by $\lambda_Q \frac{\sigma(Q)}{\omega(Q)}$, $p$ by $q'$, $q$ by $p'$, respectively, and with $\sigma$ and $\omega$ swapped.)

Whereas in the range $1<q<p$ the potential-type condition 
\eqref{eq:wolff_potential_condition} is both sufficient and necessary, in the more difficult range $0<q<1$ and $p>1$ no explicit necessary and sufficient condition is known. The authors hope that the connection between the  two-weight norm  inequality for the summation operator and the two-weight inequalities for the related maximal operators (Proposition \ref{proposition:connection}) may be extended from the range $1<q<\infty$  to the range $0<q<1$, which would be useful in finding a concrete necessary and sufficient condition for summation operators.

\section{Preliminaries}
\subsection{Discrete Littlewood--Paley spaces}We recall the definition of discrete Littlewood--Paley spaces $f^{r,s}(\mu)$ for exponents $p\in (0, +\infty]$, $q\in\br\setminus\{0\}$, and a locally finite Borel measure $\mu$ on $\br^d$. Essentially this scale of spaces was introduced by Frazier and Jawerth \cite{frazier1990} in the case of Lebesgue measure  (see \cite{cohn2000} in the general case). The {\it discrete Littlewood--Paley norm} $\norm{a}_{f^{p,q}(\mu)}$ of a family $\{a_Q\}_{Q\in\cd}$ of nonnegative reals is defined by cases as follows:
\begin{itemize}
\item For $p\in(0,\infty)$ and $q\in\br\setminus\{0\}$,
$$
\norm{a}_{f^{p,q}(\mu)}:= \Big(\int \big(  \sum_Q a_Q^q 1_Q\big)^{\frac{p}{q}} \dmu\Big)^{\frac{1}{p}}.
$$
\item For $p\in(0,\infty)$ and $q=\infty$,
$$
\norm{a}_{f^{p,\infty}(\mu)}:= \Big(\int \big( \sup_Q a_Q 1_Q \big)^p \dmu\Big)^{\frac{1}{p}}.
$$
\item For $p=\infty$ and $q\in\br\setminus\{0\}$,
$$
\norm{a}_{f^{\infty,q}(\mu)}:= \sup_{Q }\Big( \frac{1}{\mu(Q)} \sum_{R\subseteq Q} a_R^q \mu(R) \Big)^{\frac{1}{q}}.
$$
\item For $p=\infty$ and $q=\infty$,
$$
\norm{a}_{f^{\infty,\infty}(\mu)}:=\sup_Q a_Q.
$$
\end{itemize}
The discrete Littlewood--Paley norm can be computed via duality as follows \cite[Theorem 4 and Remark 5]{verbitsky1996}:
\begin{proposition}[Computing norm by duality in discrete Littlewood--Paley spaces]\label{proposition:normduality} Let $p,q\in[1,\infty]$. Let $\mu$ be a locally finite Borel measure.  Then, we have
$$
\norm{a}_{f^{p,q}(\mu)}\eqsim_{p,q} \sup_{\norm{b}_{f^{p',q'}(\mu)}\leq 1} \sum_Q a_Q b_Q \mu(Q)
$$
for every family $\{a_Q\}_{Q\in\cd}$.
\end{proposition}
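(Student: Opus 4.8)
The plan is to prove the two inequalities that make up the asserted equivalence separately. I would carry out the argument in full detail for $1<p,q<\infty$ and then reduce the endpoint exponents and the diagonal case $p=q$ to this one.

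\emph{Upper bound for the supremum.} To obtain $\sup_{\norm{b}_{f^{p',q'}(\mu)}\le 1}\sum_Q a_Q b_Q\,\mu(Q)\le\norm{a}_{f^{p,q}(\mu)}$, I would apply H\"older's inequality twice. After rewriting $\sum_Q a_Q b_Q\,\mu(Q)=\int\big(\sum_{Q\ni x}a_Q b_Q\big)\dmu(x)$ by Tonelli's theorem, H\"older in the cube index $Q$ with exponents $q,q'$ (fibrewise in $x$) bounds the integrand pointwise by $\big(\sum_Q a_Q^q 1_Q\big)^{1/q}\big(\sum_Q b_Q^{q'}1_Q\big)^{1/q'}$, and then H\"older in $L^p(\mu)$--$L^{p'}(\mu)$ gives exactly $\norm{a}_{f^{p,q}(\mu)}\norm{b}_{f^{p',q'}(\mu)}$. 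This half goes through for all $p,q\in[1,\infty]$, using the supremum forms of H\"older at the endpoints.

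\emph{Lower bound via an extremizer.} For the reverse inequality $\norm{a}_{f^{p,q}(\mu)}\lesssim_{p,q}\sup_{\norm{b}_{f^{p',q'}(\mu)}\le 1}\sum_Q a_Q b_Q\,\mu(Q)$, I would first reduce, by truncation and monotone convergence, to $a=\{a_Q\}$ supported on finitely many cubes. Writing $A:=\big(\sum_Q a_Q^q 1_Q\big)^{1/q}$, so that $\norm{a}_{f^{p,q}(\mu)}=\norm{A}_{L^p(\mu)}$, I would test against the natural extremizer
$$
b_Q:=a_Q^{q-1}\,\frac{1}{\mu(Q)}\int_Q A^{p-q}\dmu .
$$
Tonelli's theorem gives immediately $\sum_Q a_Q b_Q\,\mu(Q)=\int A^{p-q}\big(\sum_Q a_Q^q 1_Q\big)\dmu=\int A^p\dmu=\norm{a}_{f^{p,q}(\mu)}^{\,p}$, so everything comes down to establishing the bound $\norm{b}_{f^{p',q'}(\mu)}\lesssim_{p,q}\norm{a}_{f^{p,q}(\mu)}^{\,p-1}$: given it, $b/\norm{b}_{f^{p',q'}(\mu)}$ is an admissible competitor realizing the supremum up to a constant.

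\emph{The key estimate (main obstacle).} Proving $\norm{b}_{f^{p',q'}(\mu)}\lesssim_{p,q}\norm{a}_{f^{p,q}(\mu)}^{\,p-1}$ is the heart of the matter, and it is where the nesting structure of the dyadic cubes is used; the argument bifurcates according to the sign of $p-q$. If $q<p$, then for $x\in Q$ the average $\frac{1}{\mu(Q)}\int_Q A^{p-q}\dmu$ is at most $M_\mu(A^{p-q})(x)$, where $M_\mu$ is the dyadic maximal operator relative to $\mu$; this bound is independent of $Q$, so the inner sum factors as $\sum_{Q\ni x}a_Q^q\big(\tfrac{1}{\mu(Q)}\int_Q A^{p-q}\dmu\big)^{q'}\le A(x)^q\,M_\mu(A^{p-q})(x)^{q'}$ (with the supremum norm in place of the $\ell^{q'}$-sum when $q=1$), and one concludes by a further H\"older and the standard $L^s(\mu)$-boundedness of $M_\mu$ applied with $s=\tfrac{p}{p-q}>1$, the exponents fitting so that the resulting integrals return $\int A^p\dmu$. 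If $q>p$ (so automatically $q>1$), then $p-q<0$; here I would use that $A\ge S_Q^{1/q}$ on $Q$, where $S_Q:=\sum_{R:\,R\supseteq Q}a_R^q$, whence $\frac{1}{\mu(Q)}\int_Q A^{p-q}\dmu\le S_Q^{(p-q)/q}$, and then, because the dyadic cubes through a fixed point form a single decreasing chain, an Abel summation --- comparing $\sum_j(\tau_j-\tau_{j-1})\tau_j^{\beta}$ with $\int_0^{A(x)^q}t^{\beta}\dt$, where $\tau_j$ are the increasing partial sums of $a_Q^q$ along the chain and $\beta=\tfrac{p-q}{q-1}\in(-1,0)$ --- bounds the inner sum by $\lesssim_{p,q}A(x)^{(p-1)q'}$, after which $L^{p'/q'}(\mu)$-integration again returns $\int A^p\dmu$. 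The diagonal case $p=q$ is immediate, since then $f^{p,p}(\mu)$ is $\ell^p$ with weights $\{\mu(Q)\}_Q$ and the claim is $\ell^p$--$\ell^{p'}$ duality. Finally, the endpoint exponents $p\in\{1,\infty\}$ and $q=\infty$ I would treat by the same constructions adapted to the Carleson-type definition of $\norm{\cdot}_{f^{\infty,q}(\mu)}$ and to the supremum norms, or by passing to the limit from the cases already settled; the details are in \cite{verbitsky1996}.
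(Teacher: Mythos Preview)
The paper does not prove this proposition; it is quoted from \cite[Theorem 4 and Remark 5]{verbitsky1996}, so there is no in-paper argument to compare against. Your sketch is essentially the standard proof from that reference and is correct in the interior range $1<p,q<\infty$: the two-fold H\"older bound is immediate, and your extremizer $b_Q=a_Q^{q-1}\langle A^{p-q}\rangle_Q^\mu$ together with the dyadic maximal inequality (when $q<p$) or the Abel summation along the chain of cubes through a fixed point (when $q>p$, using that $\beta=\tfrac{p-q}{q-1}\in(-1,0)$ precisely because $p>1$) does give $\norm{b}_{f^{p',q'}(\mu)}\lesssim_{p,q}\norm{a}_{f^{p,q}(\mu)}^{p-1}$; the exponents close up exactly.

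The only place your outline is genuinely incomplete is the endpoint $p=\infty$ (and dually $p=1$), where $\norm{\cdot}_{f^{\infty,q}(\mu)}$ is the Carleson-type supremum rather than an integral. Here your extremizer is undefined, and ``passing to the limit'' does not recover the Carleson norm from the $p<\infty$ cases, so this endpoint is not a formality. It is precisely the content of the remark following the proposition (the $f^{\infty,1}$--$f^{1,\infty}$ duality is the dyadic Carleson embedding theorem), and your deferral to \cite{verbitsky1996} for it is appropriate; just be aware that this is where the substantive work in that reference lies.
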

\begin{remark}In particular, in the case $p=\infty,q=1$, the dual norm formula reads that the dual estimate
$$
\sum_{Q\in\cd} a_Q b_Q \mu(Q) \leq C \norm{\sup_{Q\in\cd} b_Q 1_Q}_{L^{1}(\mu)}\quad \text{for all families $b$}
$$
holds if and only if the Carleson condition $$\sup_{Q\in\cd} \frac{1}{\mu(Q)}\sum_{R\in\cd: R\subseteq Q} a_R \mu(R) \leq C $$ holds, which is a dyadic form 
of the Carleson imbedding theorem. \end{remark}

In the Littlewood--Paley spaces the following factorization holds (\cite[Theorem 2.4]{cohn2000}):
\begin{proposition}[Factorization in discrete Littlewood--Paley spaces]
\label{proposition:factorization}Let $\mu$ be a locally finite Borel measure on $\br^d$. Let $p,p_1,p_2\in(0,\infty]$ and $q,q_1,q_2\in(0,\infty]$ be exponents that satisfy the H\"older relations:
$$
\frac{1}{p}=\frac{1}{p_1}+\frac{1}{p_2}\quad \text{ and } \quad\frac{1}{q}=\frac{1}{q_1}+\frac{1}{q_2}. 
$$
Then, the following assertions hold:
\begin{enumerate}[label=(\roman*)]
\item Every $a\in f^{p_1,q_1}$ and $b\in f^{p_2,q_2}$ satisfy the estimate
$$
\norm{ab}_{f^{p,q}(\mu)}\lesssim_{q,p}\norm{a}_{f^{p_1,q_1}(\mu)}\norm{b}_{f^{p_2,q_2}(\mu)}.
$$
\item For each $c\in f^{p,q}(\mu)$ there exist $a\in f^{p_1,q_1}(\mu)$ and $b\in f^{p_2,q_2}(\mu)$ such that $c=ab$ and
$$
\norm{a}_{f^{p_1,q_1}(\mu)} \, \norm{b}_{f^{p_2,q_2}(\mu)} \lesssim_{p,q} \norm{c}_{f^{p,q}(\mu)}.
$$
\end{enumerate}
\end{proposition}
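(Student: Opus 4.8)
For assertion~(i) the plan is a twofold application of H\"older's inequality. Since $1/q=1/q_1+1/q_2$ gives $q/q_1+q/q_2=1$, I would first apply, for $\mu$-a.e.\ $x$, H\"older's inequality to the nonnegative sequences $(a_Q^{q}1_Q(x))_Q$ and $(b_Q^{q}1_Q(x))_Q$ with conjugate exponents $q_1/q$ and $q_2/q$, obtaining the pointwise bound $\big(\sum_Q(a_Qb_Q)^{q}1_Q(x)\big)^{1/q}\le A(x)\,B(x)$, where $A:=\big(\sum_Q a_Q^{q_1}1_Q\big)^{1/q_1}$ and $B:=\big(\sum_Q b_Q^{q_2}1_Q\big)^{1/q_2}$ (using $1_Q^{q}=1_Q$). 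Then I would apply H\"older's inequality in $L^p(\mu)$ with exponents $p_1/p$ and $p_2/p$ to obtain $\norm{AB}_{L^p(\mu)}\le\norm{A}_{L^{p_1}(\mu)}\norm{B}_{L^{p_2}(\mu)}=\norm{a}_{f^{p_1,q_1}(\mu)}\norm{b}_{f^{p_2,q_2}(\mu)}$, which is the claim. The endpoint cases are routine variants: replace a sum by a supremum when the relevant exponent is $\infty$; use $\sup_Q(a_Qb_Q)1_Q\le(\sup_Q a_Q1_Q)(\sup_Q b_Q1_Q)$ when $q=\infty$; and when $p=\infty$ (so $p_1=p_2=\infty$) apply the exponents $q_1/q,q_2/q$ inside the Carleson sum $\sum_{R\subseteq Q}(\,\cdot\,)^{q}\mu(R)$ before dividing by $\mu(Q)$ and taking $\sup_Q$.

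For assertion~(ii) I would first normalize $\norm{c}_{f^{p,q}(\mu)}=1$ and reduce, by exhausting $\cd$ with finite subcollections and passing to a pointwise-in-$Q$ limit (Fatou gives lower semicontinuity of the factored norms, as in part~(i)), to the case of $c$ supported on finitely many cubes with $c_Q>0$ there. Assume first $p<\infty$ (the case $p=\infty$, forcing $p_1=p_2=\infty$, is analogous, with the Carleson norm on $c$ replacing $\norm{\cdot}_{L^p(\mu)}$ throughout). Put $G:=\big(\sum_Q c_Q^{q}1_Q\big)^{1/q}$, so $\norm{G}_{L^p(\mu)}=1$. The construction is to pick a positive scalar weight $w_Q$ for each cube and set $a_Q:=c_Q^{q/q_1}w_Q$ and $b_Q:=c_Q^{q/q_2}w_Q^{-1}$; since $q/q_1+q/q_2=1$ one has $a_Qb_Q=c_Q$ automatically, so the whole problem is the choice of $\{w_Q\}$ making $\norm{a}_{f^{p_1,q_1}(\mu)}\norm{b}_{f^{p_2,q_2}(\mu)}\lesssim_{p,q}1$. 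Heuristically $w_Q^{q_1}$ should be the ``value of $G^{\kappa}$ at $Q$'' with $\kappa:=pq_1/p_1-q$ (the H\"older relations force $\kappa+q=pq_1/p_1>0$ and $q/q_2-\kappa/q_1=p/p_2>0$): then $\sum_Q c_Q^{q}w_Q^{q_1}1_Q\eqsim G^{\kappa+q}$ and $\sum_Q c_Q^{q}w_Q^{-q_2}1_Q\eqsim G^{q-\kappa q_2/q_1}$, whence $\norm{a}_{f^{p_1,q_1}(\mu)}\lesssim\norm{G^{p/p_1}}_{L^{p_1}(\mu)}=\norm{G}_{L^p(\mu)}^{p/p_1}$ and $\norm{b}_{f^{p_2,q_2}(\mu)}\lesssim\norm{G}_{L^p(\mu)}^{p/p_2}$, and the product equals $\norm{G}_{L^p(\mu)}^{p/p_1+p/p_2}=\norm{G}_{L^p(\mu)}=1$; the last two $L^{p_i}$-bounds are exactly the equality case of the scalar H\"older inequality $\norm{F_1F_2}_{L^p(\mu)}\le\norm{F_1}_{L^{p_1}(\mu)}\norm{F_2}_{L^{p_2}(\mu)}$.

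The difficulty, and the step I expect to be the main obstacle, is that $G$ need not be constant on a cube, so ``the value of $G^{\kappa}$ at $Q$'' is ambiguous: taking $w_Q^{q_1}=\inf_Q G^{\kappa}$ (or $\sup_Q G^{\kappa}$) would control one of $\norm{a}_{f^{p_1,q_1}(\mu)},\norm{b}_{f^{p_2,q_2}(\mu)}$ in the right direction but the other in the wrong direction. I would resolve this by a stopping-time (principal cube) decomposition of $G$: build a sparse family $\mathcal S\subseteq\cd$ of principal cubes on which a suitable average of $G$ roughly doubles from a principal cube to each of its principal children, attach a scalar value $G_S$ to each $S\in\mathcal S$, assign to every cube $Q$ the minimal principal cube $\pi(Q)\supseteq Q$, and set $w_Q^{q_1}:=G_{\pi(Q)}^{\kappa}$. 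On the stopping region of each $S\in\mathcal S$ one has the two-sided comparison $G\eqsim G_S$, so there the sum $\sum_{\pi(Q)=S}c_Q^{q}w_Q^{q_1}1_Q$ (and likewise the sum with $w_Q^{-q_2}$ in place of $w_Q^{q_1}$) is comparable to the appropriate power of $G$, while the Carleson packing property of $\mathcal S$ permits summation over $S\in\mathcal S$ at the cost of an absolute constant, reducing both norm estimates to the scalar H\"older identity above. Verifying the principal-cube construction, its packing estimate, and the two-sided comparisons — along with the endpoint bookkeeping when some $p_i$ or $q_i$ equals $\infty$ — is where essentially all the work lies. Alternatively, this factorization is precisely \cite[Theorem~2.4]{cohn2000} and could simply be cited.
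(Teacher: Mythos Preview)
The paper does not give a proof of this proposition at all: it is stated with a citation to \cite[Theorem~2.4]{cohn2000} and used as a black box, exactly the alternative you mention in your last sentence.

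Your argument for~(i) is correct and is the standard two-step H\"older proof. Your sketch for~(ii) follows the right strategy and is in spirit the Cohn--Verbitsky argument, but one step is stated too optimistically: a principal-cube construction based on doubling of averages of $G$ does \emph{not} give a pointwise two-sided comparison $G\eqsim G_S$ on the stopping region of $S$; it gives one-sided control of averages $\langle G\rangle_Q\lesssim G_S$ for $\pi(Q)=S$, together with $G_S\lesssim G_{S'}$ for principal children $S'$ and the Carleson packing of $\{S'\}$ inside $S$. The actual bookkeeping therefore runs not through ``$g_S\eqsim G^q 1_{E_S}$'' but through splitting $\sum_Q c_Q^q w_Q^{\pm}1_Q$ over principal coronas, bounding the inner sums by the corona averages, and then using the packing estimate to sum the resulting geometric-type series in $S$. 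You correctly flag this as ``where essentially all the work lies''; just be aware that the two-sided pointwise heuristic, as written, would not survive a careful write-up.
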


\subsection{Dyadic Hardy--Littlewood maximal inequality}We recall the dyadic Hardy--Littlewood maximal inequality. The {\it dyadic Hardy--Littlewood maximal operator} $M^\mu(\cdotroomy)$ is defined by 
$$
M^\mu(f):=\sup_{Q\in\cd} \angles{f}^\mu_Q 1_Q.
$$

\begin{lemma}[Dyadic Hardy--Littlewood maximal inequality] Let $p\in(1,\infty]$, and let $\mu$ be a locally finite Borel measure  on $\br^d$. Then
$$
\norm{M^\mu(f)}_{L^p(\mu)}\lesssim_p \norm{f}_{L^p(\mu)}
$$
for every $f\in L^p(\mu)$.
\end{lemma}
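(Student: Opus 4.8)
The plan is to deduce the inequality directly from the duality formula for discrete Littlewood--Paley spaces (Proposition~\ref{proposition:normduality}), which collapses the maximal inequality into a one-line application of H\"older's inequality. First note that, since $\mu$ is locally finite and every dyadic cube is bounded, $\mu(Q)<\infty$ for all $Q\in\cd$; combined with H\"older's inequality this gives $\angles{f}^\mu_Q<\infty$ whenever $f\in L^p(\mu)$, so $\{\angles{f}^\mu_Q\}_{Q\in\cd}$ is a genuine family of non-negative reals. Moreover, straight from the definitions,
$$
\norm{M^\mu(f)}_{L^p(\mu)}=\Big\|\sup_Q \angles{f}^\mu_Q 1_Q\Big\|_{L^p(\mu)}=\big\|\{\angles{f}^\mu_Q\}_Q\big\|_{f^{p,\infty}(\mu)}.
$$
The endpoint $p=\infty$ is immediate and needs no duality: for $\mu$-a.e.\ $x$ and every cube $Q\ni x$ one has $\angles{f}^\mu_Q\le\norm{f}_{L^\infty(\mu)}$, hence $M^\mu(f)\le\norm{f}_{L^\infty(\mu)}$ $\mu$-a.e., with constant $1$.

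For $p\in(1,\infty)$ I would set $a_Q:=\angles{f}^\mu_Q$ and apply Proposition~\ref{proposition:normduality} with the exponent pair $(p,\infty)$, which is legitimate since $p,\infty\in[1,\infty]$; the conjugate pair is $(p',1)$ with $p'\in(1,\infty)$. This yields
$$
\norm{M^\mu(f)}_{L^p(\mu)}=\norm{a}_{f^{p,\infty}(\mu)}\lesssim_p \sup_{\norm{b}_{f^{p',1}(\mu)}\le 1}\sum_Q a_Q b_Q\,\mu(Q).
$$
Then, for any family $b=\{b_Q\}$ with $\norm{b}_{f^{p',1}(\mu)}\le 1$, using $a_Q\mu(Q)=\int_Q|f|\dmu$, Tonelli's theorem, H\"older's inequality with exponents $p$ and $p'$, and the very definition of the $f^{p',1}(\mu)$ norm, one gets
\begin{align*}
\sum_Q a_Q b_Q\,\mu(Q)
&=\sum_Q b_Q\int_Q|f|\dmu
=\int|f|\Big(\sum_Q b_Q 1_Q\Big)\dmu\\
&\le\norm{f}_{L^p(\mu)}\Big\|\sum_Q b_Q 1_Q\Big\|_{L^{p'}(\mu)}
=\norm{f}_{L^p(\mu)}\norm{b}_{f^{p',1}(\mu)}\le\norm{f}_{L^p(\mu)}.
\end{align*}
Taking the supremum over such $b$ gives $\norm{M^\mu(f)}_{L^p(\mu)}\lesssim_p\norm{f}_{L^p(\mu)}$, which is the claim.

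There is essentially no serious obstacle: the only points needing (routine) care are verifying that the exponents fed to Proposition~\ref{proposition:normduality} lie in $[1,\infty]$ and that all the averages $\angles{f}^\mu_Q$ are finite, the latter being exactly where local finiteness of $\mu$ enters. For the record, the classical alternative is available too --- prove the weak-type $(1,1)$ bound $\mu(\{M^\mu(f)>\lambda\})\le\lambda^{-1}\norm{f}_{L^1(\mu)}$ by decomposing the level set into the pairwise disjoint maximal dyadic cubes on which the average of $|f|$ exceeds $\lambda$, and interpolate with the trivial $L^\infty$ estimate via the Marcinkiewicz interpolation theorem --- but that route must confront the existence of maximal cubes for a possibly non-doubling, possibly infinite measure, a nuisance the duality argument above avoids entirely.
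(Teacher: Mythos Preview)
The paper does not prove this lemma; it is recorded as a standard preliminary without argument. Your computation is clean and formally correct once Proposition~\ref{proposition:normduality} is taken as a black box.

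The substantive concern is circularity. The inequality you actually invoke is the \emph{hard} direction of the duality,
\[
\norm{a}_{f^{p,\infty}(\mu)}\ \lesssim_p\ \sup_{\norm{b}_{f^{p',1}(\mu)}\le 1}\ \sum_Q a_Q b_Q\,\mu(Q),
\]
and the standard proofs of this direction (including the one in the reference cited for Proposition~\ref{proposition:normduality}) pass through the dyadic maximal inequality or a result of equivalent strength; the paper's own remark after Proposition~\ref{proposition:normduality} already identifies the endpoint case of this duality with the Carleson embedding theorem. So before presenting this as an independent proof you would need to verify that the specific instance of Proposition~\ref{proposition:normduality} you use can be established without the maximal inequality --- otherwise the argument is circular. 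By contrast, the classical weak-type $(1,1)$ plus Marcinkiewicz route you sketch at the end is genuinely self-contained, and the ``maximal cubes'' technicality you flag is routine for dyadic grids (reduce by monotone convergence to $f$ supported in a fixed top cube, after which every ascending chain of cubes with $\angles{f}^\mu_Q>\lambda$ terminates).
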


\subsection{Equivalent expressions}
\begin{lemma}[Equivalent expressions; Proposition 2.2 in \cite{cascante2004}]\label{lemma:eq_exp}Let $p\in(1,\infty)$. Then the following expressions are comparable:
\begin{equation}
\begin{split}
&\int \big( \sum_Q a_Q 1_Q \big)^p \dmu\\
&\eqsim_p \sum_Q a_Q \mu(Q) \big(\frac{1}{\mu(Q)} \sum_{R\subseteq Q} a_R \mu(R) \big)^{p-1}\\
&\eqsim_p \int \big( \sup_{Q} \frac{1_Q}{\mu(Q)} \sum_{R\subseteq Q} a_R \mu(R) \big)^p \dmu.
\end{split}
\end{equation}
\end{lemma}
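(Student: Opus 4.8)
The plan is to route everything through a one-dimensional summation-by-parts identity and to close the estimates with a single use of H\"older's inequality and the dyadic Hardy--Littlewood maximal inequality. Write $b_Q:=\tfrac1{\mu(Q)}\sum_{R\subseteq Q}a_R\mu(R)$, and set $F:=\sum_Q a_Q1_Q$ and $G:=\sup_Q b_Q1_Q$, so that the three expressions in the statement are $\int F^p\dmu$, $\mathfrak S:=\sum_Q a_Q b_Q^{\,p-1}\mu(Q)$, and $\int G^p\dmu$. First I would reduce to the case where only finitely many $a_Q$ are nonzero: all three quantities are increasing limits of their values on finite truncations of $\{a_Q\}$, and on a truncation they are all finite, so it suffices to prove the comparisons with $p$-only constants for truncations. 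The tool is the elementary comparison that, for nonnegative reals $c_1,\dots,c_N$ and $r>0$, with $C_k:=\sum_{j\le k}c_j$,
$$
\Big(\sum_k c_k\Big)^{r}=C_N^{\,r}=\sum_k\big(C_k^{\,r}-C_{k-1}^{\,r}\big)\eqsim_r\sum_k c_k\,C_k^{\,r-1},
$$
where $C_k^{\,r}-C_{k-1}^{\,r}\eqsim_r c_kC_k^{\,r-1}$ is verified by distinguishing $C_{k-1}\ge C_k/2$ from $C_{k-1}<C_k/2$; the same holds with the tail sums $\sum_{j\ge k}c_j$ in place of $C_k$. Applying this to the chain of dyadic cubes containing a fixed $x$, ordered by decreasing size, with $r=p$, and then integrating $\dmu$, produces the two identities
$$
\int F^p\dmu\ \eqsim_p\ \sum_Q a_Q A_Q^{\,p-1}\mu(Q),\qquad\int F^p\dmu\ \eqsim_p\ \sum_Q a_Q\!\int_Q F_Q^{\,p-1}\dmu,
$$
where $A_Q:=\sum_{R\supseteq Q}a_R$ (so $\sum_{R\supseteq Q}a_R1_R=A_Q$ on $Q$: the head-sum version) and $F_Q:=\sum_{R\subseteq Q}a_R1_R$ (so $\angles{F_Q}^{\mu}_Q=b_Q$: the tail-sum version).

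Three inequalities are now essentially immediate. Since $b_Q\le\angles{F_Q}^{\mu}_Q\le\angles{F}^{\mu}_Q\le M^{\mu}F$ on $Q$, the dyadic Hardy--Littlewood maximal inequality gives $\int G^p\dmu\le\int(M^{\mu}F)^p\dmu\lesssim_p\int F^p\dmu$, and the same pointwise bound with H\"older gives $\mathfrak S\le\int F\,(M^{\mu}F)^{p-1}\dmu\lesssim_p\int F^p\dmu$. And since $b_Q\le G$ on $Q$, we get $b_Q^{\,p-1}\mu(Q)\le\int_Q G^{p-1}\dmu$, so summing against $a_Q$ and using H\"older gives $\mathfrak S\le\int F\,G^{p-1}\dmu\le\norm{F}_{L^p(\mu)}\,\norm{G}_{L^p(\mu)}^{\,p-1}$.

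The substance is the reverse bound $\int F^p\dmu\lesssim_p\mathfrak S$. For $1<p\le2$ it is immediate from the tail-sum identity and the concavity of $t\mapsto t^{p-1}$, since $\int_Q F_Q^{\,p-1}\dmu\le\mu(Q)\big(\angles{F_Q}^{\mu}_Q\big)^{p-1}=\mu(Q)\,b_Q^{\,p-1}$. For $p>2$ I would apply the summation-by-parts identity once more, now to $A_Q^{\,p-1}=\big(\sum_{R\supseteq Q}a_R\big)^{p-1}$ with $r=p-1$, and interchange the order of summation over the dyadic tree:
$$
\int F^p\dmu\ \eqsim_p\ \sum_Q a_Q\mu(Q)\sum_{R\supseteq Q}a_R A_R^{\,p-2}\ =\ \sum_R a_R A_R^{\,p-2}b_R\mu(R)\ =:Z.
$$
H\"older on $Z$ with conjugate exponents $\tfrac{p-1}{p-2}$ and $p-1$, via $a_R A_R^{\,p-2}b_R\mu(R)=\big(a_R A_R^{\,p-1}\mu(R)\big)^{\frac{p-2}{p-1}}\big(a_R b_R^{\,p-1}\mu(R)\big)^{\frac1{p-1}}$, yields $Z\lesssim_p\big(\int F^p\dmu\big)^{\frac{p-2}{p-1}}\mathfrak S^{\frac1{p-1}}$; since $\int F^p\dmu\eqsim_p Z$ and is finite for truncations, this rearranges to $\int F^p\dmu\lesssim_p\mathfrak S$ (for $p=2$ the sum $Z$ already equals $\mathfrak S$). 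Finally, combining $\int F^p\dmu\eqsim_p\mathfrak S$ with $\mathfrak S\le\norm{F}_{L^p(\mu)}\norm{G}_{L^p(\mu)}^{\,p-1}$ gives $\int F^p\dmu\lesssim_p\int G^p\dmu$, which with $\int G^p\dmu\lesssim_p\int F^p\dmu$ closes the chain of comparisons; passing to the limit along the truncations finishes the proof.

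The main obstacle is the $p>2$ half of $\int F^p\dmu\lesssim_p\mathfrak S$. A term-by-term comparison is unavailable because the Carleson average $b_Q$ need not be monotone in $Q$, so one is forced through the mixed quantity $Z$ and must close the estimate by a self-improving H\"older inequality; keeping the dyadic-tree summation exchanges honest and all quantities finite (so the self-improving step is legitimate) is where the care lies. A stopping-time decomposition adapted to the running sums $A_Q$ would be an alternative route, but it appears longer.
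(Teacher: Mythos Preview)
The paper does not give its own proof of this lemma; it simply quotes it as Proposition~2.2 of \cite{cascante2004}. Your argument is a correct, self-contained proof, and it is in fact close to the one in the cited reference: the summation-by-parts comparison $\big(\sum_k c_k\big)^r\eqsim_r\sum_k c_k C_k^{\,r-1}$ along the chain of dyadic ancestors of a point is exactly the mechanism used there, and the self-improving H\"older step to close $\int F^p\dmu\lesssim_p\mathfrak S$ for $p>2$ is also standard. Your treatment of the three ``easy'' inequalities via $G\le M^\mu F$ and H\"older is clean, the concavity argument for $1<p\le2$ is correct, and the finiteness on truncations legitimizes both self-improving steps. Nothing is missing.
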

\subsection{Reformulations of the two-weight norm inequalities}We reformulate the two-weight norm inequalities in terms of coefficients in place of functions. These reformulations are used in Subsection \ref{subsection:proof_connection} to pass between the two-weight norm inequality for summation operators and the related inequalities for related maximal operators.

\begin{lemma}[Reformulations for summation operators]\label{lemma:reformulations_summation}Let  $p,q\in(1,\infty)$. Then the following estimates are equivalent:
\begin{enumerate}[label=(\roman*)]
\item \label{temp:a1} We have
$$
\norm{\sum_P \lambda_P \angles{f}^\sigma_P 1_P }_{L^q(\omega)}\lesssim_{p,q} C \norm{f}_{L^p(\sigma)}
$$
for all functions $f$.
\item  \label{temp:a2}We have
$$
\sum_P \lambda_P \omega(P) \angles{f}^\sigma_P \angles{g}^\omega_P \lesssim_{p,q} C \norm{f}_{L^p(\sigma)} \norm{g}_{L^{q'}(\omega)}
$$
for all functions $f$ and $g$.
\item \label{temp:a3} We have
$$
\sum_P \lambda_P \omega(S) a_P b_P \lesssim_{p,q} C \norm{\sup_Q a_Q 1_Q}_{L^p(\sigma)} \norm{\sup_R b_R 1_R}_{L^{q'}(\omega)}
$$
for all families $a$ and $b$.
\item  \label{temp:a4} We have
$$
\sum_P \lambda_P \omega(P) \big(\sum_{Q\supseteq P} \tilde{a}_Q\big) \big( \sum_{R\supseteq P} \tilde{b}_R \big) \lesssim_{p,q} C \norm{\sum_Q \tilde{a}_Q 1_Q}_{L^p(\sigma)} \norm{\sum_R \tilde{b}_R 1_R}_{L^{q'}(\omega)}
$$
for all families $\tilde{a}$ and $\tilde{b}$.
\end{enumerate}
\end{lemma}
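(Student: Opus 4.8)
The plan is to establish the three equivalences \ref{temp:a1}$\Leftrightarrow$\ref{temp:a2}, \ref{temp:a2}$\Leftrightarrow$\ref{temp:a3}, and \ref{temp:a3}$\Leftrightarrow$\ref{temp:a4} separately, each by a pair of one-line substitutions, using the dyadic Hardy--Littlewood maximal inequality only for the middle link. Since all quantities in sight are non-negative, absolute values, Tonelli's theorem and the convention $\tfrac00=0$ cause no difficulty, and I use them without comment (reading the factor $\omega(S)$ in \ref{temp:a3} as $\omega(P)$). The link \ref{temp:a1}$\Leftrightarrow$\ref{temp:a2} is just $L^q(\omega)$--$L^{q'}(\omega)$ duality, available because $q\in(1,\infty)$: for non-negative $g$ one has $\int\big(\sum_P\lambda_P\angles{f}^\sigma_P1_P\big)g\domega=\sum_P\lambda_P\omega(P)\angles{f}^\sigma_P\angles{g}^\omega_P$, so taking the supremum over $\norm{g}_{L^{q'}(\omega)}\le1$ converts the $L^q(\omega)$-norm in \ref{temp:a1} into the bilinear form in \ref{temp:a2}.

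\emph{The link \ref{temp:a2}$\Leftrightarrow$\ref{temp:a3}:} given families $a,b$, apply \ref{temp:a2} to $f:=\sup_Q a_Q1_Q$ and $g:=\sup_R b_R1_R$; since $f\ge a_P1_P$ pointwise we get $\angles{f}^\sigma_P\ge a_P$ and $\angles{g}^\omega_P\ge b_P$, while $\norm{f}_{L^p(\sigma)}=\norm{\sup_Q a_Q1_Q}_{L^p(\sigma)}$, which gives \ref{temp:a3}. Conversely, given $f,g$, apply \ref{temp:a3} to $a_Q:=\angles{f}^\sigma_Q$ and $b_R:=\angles{g}^\omega_R$; then $\sup_Q a_Q1_Q=M^\sigma f$ and $\sup_R b_R1_R=M^\omega g$, so the dyadic Hardy--Littlewood maximal inequality (applicable as $p>1$ and $q'>1$) gives $\norm{\sup_Q a_Q1_Q}_{L^p(\sigma)}\lesssim_p\norm{f}_{L^p(\sigma)}$ and $\norm{\sup_R b_R1_R}_{L^{q'}(\omega)}\lesssim_q\norm{g}_{L^{q'}(\omega)}$, yielding \ref{temp:a2}.

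\emph{The link \ref{temp:a3}$\Leftrightarrow$\ref{temp:a4}:} the bridge is the substitution $a_Q:=\sum_{R\supseteq Q}\tilde a_R$, under which $\sup_Q a_Q1_Q=\sum_R\tilde a_R1_R$ pointwise (the dyadic cubes containing a fixed point form a chain) and $a_P=\sum_{Q\supseteq P}\tilde a_Q$; applying it to both $\tilde a$ and $\tilde b$ turns \ref{temp:a4} verbatim into an instance of \ref{temp:a3}, so \ref{temp:a3}$\Rightarrow$\ref{temp:a4}. For the reverse I would, given $a,b$ in \ref{temp:a3}, first replace them by their monotone majorants $a^*_Q:=\sup_{R\supseteq Q}a_R$, $b^*_R:=\sup_{S\supseteq R}b_S$, which satisfy $\sup_Q a^*_Q1_Q=\sup_Q a_Q1_Q$ (likewise for $b$) while only enlarging the left-hand side, so that it suffices to treat monotone families; then truncate to finitely many cubes, recovering the general estimate by monotone convergence; and finally, for a finitely supported monotone family, set $\tilde a_Q:=a_Q-a_{\hat Q}$ with $\hat Q$ the dyadic parent of $Q$, which is non-negative and telescopes to $\sum_{R\supseteq Q}\tilde a_R=a_Q$, so that feeding $\tilde a,\tilde b$ into \ref{temp:a4} recovers \ref{temp:a3}.

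\emph{Main obstacle.} None of the steps is deep; every link is a change of variables. The only place that demands care is the reverse implication \ref{temp:a4}$\Rightarrow$\ref{temp:a3}, where the reduction to monotone, finitely supported families (and the attendant limiting arguments) must be justified before the substitution $a_Q=\sum_{R\supseteq Q}\tilde a_R$ becomes an exact identity; I expect the routine bookkeeping, rather than any genuine difficulty, to be concentrated there.
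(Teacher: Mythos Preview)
Your proof is correct and matches the paper's approach step for step: $L^q$--$L^{q'}$ duality for \ref{temp:a1}$\Leftrightarrow$\ref{temp:a2}, the Hardy--Littlewood maximal inequality for \ref{temp:a2}$\Leftrightarrow$\ref{temp:a3}, and the telescoping substitution for \ref{temp:a3}$\Leftrightarrow$\ref{temp:a4}. For the bookkeeping you flag in \ref{temp:a4}$\Rightarrow$\ref{temp:a3}, the paper truncates \emph{first} and then sets $\tilde a_Q:=\sup_{S\supseteq Q}a_S-\sup_{S\supseteq\hat Q}a_S$ directly (merging your majorant and difference steps), which sidesteps the issue that a nontrivial monotone family can never be finitely supported.
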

\begin{proof} The equivalence between  estimates \ref{temp:a1} and \ref{temp:a2} follows from the $L^q(\omega)-L^{q'}(\omega)$ duality.

Estimate \ref{temp:a2} implies estimate \ref{temp:a3} via the substitutions $f:=\sup_Q a_Q 1_Q$ and $g:=\sup_R b_R 1_R$, and, conversely,   estimate \ref{temp:a3} implies estimate \ref{temp:a2}  via the substitutions $a_Q:= \angles{f}^\sigma_Q$ and $b_R:= \angles{g}^\omega_R$ together with the Hardy--Littlewood maximal inequality.

Estimate \ref{temp:a3} implies  estimate \ref{temp:a4} via the substitutions $a_Q:=\sum_{S\supseteq Q} \tilde{a}_S$ and $b_R:=\sum_{S\supseteq R} \tilde{b}_R$. We next check that, conversely,  estimate \ref{temp:a4} implies estimateestimate \ref{temp:a3} via the substitutions 
$$
\tilde{a}_Q:= (\sup_{S\supseteq Q} a_S - \sup_{S\supseteq \hat{Q}} a_S) \quad\text{ and } \quad \tilde{b}_R:= (\sup_{S\supseteq R} b_S - \sup_{S\supseteq \hat{R}} b_S),
$$
where $\hat{Q}$ and $\hat{R}$ denote the dyadic parents of the cubes $Q$ and $R$. 

By the monotone converge theorem, we may assume without loss of generality that the the families $a$ and $b$ are supported on finitely many cubes. Now, in the expression appearing on the right-hand side of estimate \ref{temp:a4}, by a telescoping summation, we have
\begin{equation}\label{temp:a5}
\sum_Q \tilde{a}_Q 1_Q= \sup_Q \sum_{R\supseteq Q} \tilde{a}_R 1_Q=\sup_Q \big( \sum_{R\supseteq Q} (\sup_{S\supseteq R} a_S - \sup_{S\supseteq \hat{R}} a_S) \big)1_Q= \sup_Q \sup_{S\supseteq Q} a_S 1_Q = \sup_Q a_Q 1_Q,
\end{equation}
and, in the expression appearing on the left-hand side of estimate  \ref{temp:a4}, again by a telescoping summation, we have
\begin{equation}\label{temp:a6}
\big(\sum_{Q\supseteq P} \tilde{a}_Q\big)= \big(\sum_{Q\supseteq P} (\sup_{S\supseteq Q} a_S - \sup_{S\supseteq \hat{Q}} a_S)\big)= \sup_{S\supseteq P} a_S\geq a_P.
\end{equation}
Combining the inequalities \eqref{temp:a5} and \eqref{temp:a6} for the family $a$ and the same inequalities for the family $b$ with estimate \ref{temp:a4} yields estimate \ref{temp:a3}. The proof is complete.

\end{proof}

Similarly, using the same substitutions as in the proof of Lemma \ref{lemma:reformulations_summation}, we obtain the following reformulations of the two-weight norm inequality for maximal operators:
\begin{lemma}[Reformulations for maximal operators]\label{lemma:reformulations_supremum} Let  $q\in(0,\infty)$ and $p\in(1,\infty)$. Then the following estimates are equivalent:
\begin{enumerate}[label=(\roman*)]
\item We have
$$
\norm{\sup_P \lambda_P \angles{f}^\sigma_P 1_P}_{L^q(\omega)}\lesssim_{p} \norm{f}_{L^p(\sigma)}
$$
for all functions $f$.
\item We have
$$
\norm{\sup_P\lambda_P a_P 1_P}_{L^q(\omega)}\lesssim_{p} \norm{\sup_Q a_Q 1_Q }_{L^p(\sigma)}
$$
for all families $a$.
\item We have
$$
\norm{\sup_P \lambda_P  \big(\sum_{Q\supseteq P} \tilde{a}_Q\big) 1_P}_{L^q(\omega)}\lesssim_{p} \norm{\sum_Q \tilde{a}_Q 1_Q }_{L^p(\sigma)}
$$ 
 for all families $\tilde{a}$.
\end{enumerate}
\end{lemma}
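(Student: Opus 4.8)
The plan is to follow the proof of Lemma~\ref{lemma:reformulations_summation} line by line, with the localized sums $\sum_{R\subseteq Q}(\cdotroomy)$ replaced throughout by localized suprema $\sup_{R\subseteq Q}(\cdotroomy)$, and with the very same substitutions. Namely: to go from (i) to (ii) I substitute $f:=\sup_Q a_Q 1_Q$; from (ii) to (i) I substitute $a_Q:=\angles{f}^\sigma_Q$; from (ii) to (iii) I substitute $a_Q:=\sum_{S\supseteq Q}\tilde a_S$; and from (iii) to (ii) I substitute $\tilde a_Q:=\sup_{S\supseteq Q}a_S-\sup_{S\supseteq\hat Q}a_S$, where $\hat Q$ is the dyadic parent of $Q$. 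Note that since the implicit constants are allowed to depend on $p$, the only place where a genuine constant is lost is the invocation of the Hardy--Littlewood maximal inequality.

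The equivalence of (i) and (ii) is immediate. For (i)$\Rightarrow$(ii), taking $f=\sup_Q a_Q 1_Q$ gives $\angles{f}^\sigma_P\geq a_P$ for every $P$, since $\sup_Q a_Q 1_Q\geq a_P 1_P$ pointwise; hence $\sup_P\lambda_P\angles{f}^\sigma_P 1_P\geq\sup_P\lambda_P a_P 1_P$ pointwise, while $\norm{f}_{L^p(\sigma)}=\norm{\sup_Q a_Q 1_Q}_{L^p(\sigma)}$, which is (ii). For (ii)$\Rightarrow$(i), taking $a_Q=\angles{f}^\sigma_Q$ makes the two left-hand sides coincide and turns the right-hand side into $\norm{M^\sigma f}_{L^p(\sigma)}\lesssim_p\norm{f}_{L^p(\sigma)}$ by the dyadic Hardy--Littlewood maximal inequality. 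The implication (ii)$\Rightarrow$(iii) is likewise a direct substitution: with $a_Q=\sum_{S\supseteq Q}\tilde a_S$ the left-hand side of (ii) is literally the left-hand side of (iii), and since $\tilde a\geq 0$ one has $\sup_Q\big(\sum_{S\supseteq Q}\tilde a_S\big)1_Q=\sum_Q\tilde a_Q 1_Q$ pointwise (this is the first equality in \eqref{temp:a5}, read from right to left), so the two right-hand sides agree.

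The only implication needing a little care is (iii)$\Rightarrow$(ii), which is where the telescoping identities \eqref{temp:a5}--\eqref{temp:a6} enter. By the monotone convergence theorem I may assume $a$ is supported on finitely many cubes, so that all sums over dyadic ancestors terminate. The family $\tilde a_Q=\sup_{S\supseteq Q}a_S-\sup_{S\supseteq\hat Q}a_S$ is non-negative because $\{S:S\supseteq Q\}\supseteq\{S:S\supseteq\hat Q\}$. Telescoping along the chain of ancestors of $P$ gives $\sum_{Q\supseteq P}\tilde a_Q=\sup_{S\supseteq P}a_S\geq a_P$, exactly as in \eqref{temp:a6}, so the left-hand side of (iii) dominates $\sup_P\lambda_P a_P 1_P$, the left-hand side of (ii); and a second telescoping together with the reindexing in \eqref{temp:a5} gives $\sum_Q\tilde a_Q 1_Q=\sup_Q\big(\sum_{R\supseteq Q}\tilde a_R\big)1_Q=\sup_Q\big(\sup_{S\supseteq Q}a_S\big)1_Q=\sup_Q a_Q 1_Q$, so the right-hand side of (iii) equals that of (ii). Plugging these into (iii) yields (ii) and closes the cycle. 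The main obstacle is thus purely bookkeeping --- justifying the telescoping and the interchange of suprema --- which is handled by the reduction to finitely supported families, precisely as in Lemma~\ref{lemma:reformulations_summation}.
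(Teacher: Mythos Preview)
Your proof is correct and follows exactly the approach indicated in the paper, which simply states that the lemma is obtained ``using the same substitutions as in the proof of Lemma~\ref{lemma:reformulations_summation}'' without spelling out the details. You have faithfully carried out those substitutions and the telescoping identities \eqref{temp:a5}--\eqref{temp:a6}, including the reduction to finitely supported families; there is nothing to add.
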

\subsection{Characterization of multipliers between Carleson coefficients}We characterize the two-weight norm inequality for multipliers of Carleson coefficients. In addition to being interesting in its own right, this characterization is applied to characterize the two-weight norm inequality for summation operators under the $A_\infty$ assumption (see Proposition \ref{proposition:ainfty}).
\begin{proposition}[Characterization of multipliers of Carleson coefficients]\label{proposition:multipliers_carleson} Let $\sigma$ and $\omega$ be locally finite Borel measures. Let $\{\mu_Q\}_{Q \in \cd}$ be a family of non-negative reals (the multiplier of Carleson coefficients). Then the following assertions are equivalent:
\begin{enumerate}[label=(\roman*)]
\item \label{temp:1} We have
$$
\int \sup_{R\subseteq Q} \Big(\mu_R 1_R \Big) \domega \leq C \sigma(Q) \quad\text{for every cube $Q\in \cd$.}
$$
\item \label{temp:2} We have
\begin{equation*}
\norm{\{ \mu_Q a_Q\}}_{f^{1,\infty}(\omega)}\leq C \norm{a}_{f^{1,\infty}(\sigma)}\quad\text{for every family $\{a_Q\}$.}
\end{equation*}
\item \label{temp:3} We have
$$
\norm{\{\frac{\omega(Q)}{\sigma(Q)} \mu_Q b_Q\}}_{f^{\infty,1}(\sigma)}\leq C \norm{b}_{f^{\infty,1}(\omega)}\quad\text{for every family $\{b_Q\}$.}
$$
\end{enumerate}
Furthermore, the constants in the assertions are comparable.
\end{proposition}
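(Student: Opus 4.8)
I would prove the cycle $(i)\Rightarrow(ii)\Rightarrow(i)$ together with the equivalence $(ii)\Leftrightarrow(iii)$, keeping track of constants at each step so that the final comparability statement is automatic. The implication $(ii)\Rightarrow(i)$ is immediate by specialization: given a cube $Q_0\in\cd$, apply the estimate in $(ii)$ to the test family defined by $a_Q:=1$ for $Q\subseteq Q_0$ and $a_Q:=0$ otherwise. Then $\sup_Q a_Q 1_Q=1_{Q_0}$, so $\norm{a}_{f^{1,\infty}(\sigma)}=\sigma(Q_0)$, while $\sup_Q \mu_Q a_Q 1_Q=\sup_{R\subseteq Q_0}\mu_R1_R$, so $\norm{\{\mu_Qa_Q\}}_{f^{1,\infty}(\omega)}=\int\sup_{R\subseteq Q_0}\mu_R1_R\,\domega$; thus $(ii)$ for this family is exactly $(i)$ for $Q_0$ (the degenerate cases $\sigma(Q_0)\in\{0,\infty\}$ being handled by the standing conventions).

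\textbf{The main step $(i)\Rightarrow(ii)$.} Abbreviate $h_Q:=\sup_{R\subseteq Q}\mu_R 1_R$, so that $(i)$ reads $\int h_Q\,\domega\le C\sigma(Q)$, and we must show $\int\sup_P\mu_Pa_P1_P\,\domega\lesssim C\norm{a}_{f^{1,\infty}(\sigma)}$ for every nonnegative family $a$. By monotone convergence it suffices to treat families $a$ supported on finitely many cubes, in which case all quantities below are finite. Put $g:=\sup_Q a_Q1_Q$ and, for $k\in\bz$, let $\cs_k$ be the collection of maximal dyadic cubes $Q$ with $a_Q>2^k$; these cubes are pairwise disjoint with $\bigcup_{Q\in\cs_k}Q=\{g>2^k\}$, so the layer-cake formula and a geometric series give $\sum_{k\in\bz} 2^k\sum_{Q\in\cs_k}\sigma(Q)=\sum_{k\in\bz}2^k\sigma(\{g>2^k\})\le 2\int g\,\dsigma$. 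The heart of the matter is the pointwise estimate: fix $x$ and a cube $P\ni x$ with $a_P>0$, and let $k$ be the integer with $2^k<a_P\le 2^{k+1}$; since $a_P>2^k$, the cube $P$ is contained in a (necessarily unique, by disjointness) cube $Q\in\cs_k$ with $x\in Q$, and hence $\mu_Pa_P\le 2^{k+1}\mu_P1_P(x)\le 2^{k+1}h_Q(x)$. Taking the supremum over such $P$ and then bounding the supremum over the scales $k$ by the sum over them yields $\sup_P\mu_Pa_P1_P(x)\le\sum_{k\in\bz}2^{k+1}\sum_{Q\in\cs_k}h_Q(x)1_Q(x)$. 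Integrating this against $\domega$, using that each $h_Q$ is supported in $Q$, applying $(i)$ cube by cube, and then invoking the display above gives $\int\sup_P\mu_Pa_P1_P\,\domega\le 4C\int g\,\dsigma=4C\norm{a}_{f^{1,\infty}(\sigma)}$, which is $(ii)$.

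\textbf{The duality step $(ii)\Leftrightarrow(iii)$.} Here I would apply Proposition \ref{proposition:normduality} twice, with the exponent pair $(p,q)=(1,\infty)$ (whose conjugate is $(\infty,1)$) and with the pair $(p,q)=(\infty,1)$. Writing out the pairing, the left-hand side of $(ii)$ equals, up to constants, $\sup_{\norm{b}_{f^{\infty,1}(\omega)}\le1}\sum_Q\mu_Qa_Qb_Q\,\omega(Q)$, while the left-hand side of $(iii)$ equals, up to constants, $\sup_{\norm{a}_{f^{1,\infty}(\sigma)}\le1}\sum_Q\frac{\omega(Q)}{\sigma(Q)}\mu_Qb_Qa_Q\,\sigma(Q)=\sup_{\norm{a}_{f^{1,\infty}(\sigma)}\le1}\sum_Q\mu_Qa_Qb_Q\,\omega(Q)$ after absorbing the factor $\omega(Q)/\sigma(Q)$. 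Consequently both $(ii)$ and $(iii)$ are equivalent, with comparable constants, to the single bilinear estimate $\sum_Q\mu_Qa_Qb_Q\,\omega(Q)\lesssim C\,\norm{a}_{f^{1,\infty}(\sigma)}\norm{b}_{f^{\infty,1}(\omega)}$ valid for all families $a,b$; homogeneity in $a$ and in $b$ lets one pass freely between the ``for all $a$, for all unit $b$'' formulations and this joint form. Chaining the comparabilities produced at each step gives the final claim that all constants are comparable.

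\textbf{Main obstacle.} The only place that needs genuine care is the pointwise stopping estimate inside $(i)\Rightarrow(ii)$: one must organize the cubes $P$ by the dyadic size of $a_P$ so that, on each layer $\cs_k$, condition $(i)$ can be applied one stopping cube at a time and the resulting bound reassembled via the layer-cake identity for $g$. The other two ingredients---the test-family specialization for $(ii)\Rightarrow(i)$ and the two applications of norm duality for $(ii)\Leftrightarrow(iii)$---are routine.
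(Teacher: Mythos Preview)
Your proof is correct. The specialization for $(ii)\Rightarrow(i)$ and the duality argument for $(ii)\Leftrightarrow(iii)$ match the paper's treatment essentially verbatim.

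For the main implication $(i)\Rightarrow(ii)$ you take a different route from the paper. The paper linearizes the supremum on the $\omega$-side, writing $\max_Q \mu_Q a_Q 1_Q=\sum_Q \mu_Q a_Q 1_{E(Q)}$ for pairwise disjoint sets $E(Q)\subseteq Q$, so that the desired bound becomes $\sum_Q \mu_Q a_Q\,\omega(E(Q))\le C\norm{\sup_Q a_Q 1_Q}_{L^1(\sigma)}$; by the $f^{1,\infty}(\sigma)$--$f^{\infty,1}(\sigma)$ duality (Carleson embedding), this is equivalent to the Carleson condition $\sum_{R\subseteq Q}\mu_R\,\omega(E(R))\le C\sigma(Q)$, which follows immediately from $(i)$ since $\sum_{R\subseteq Q}\mu_R\,\omega(E(R))\le\int\sup_{R\subseteq Q}\mu_R1_R\,\domega$. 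Your argument instead organizes the cubes $P$ by the dyadic level of $a_P$, passes to maximal stopping cubes $\cs_k$, applies $(i)$ once per stopping cube, and reassembles via the layer-cake identity for $g=\sup_Q a_Q 1_Q$. Both approaches are standard and of comparable length; the paper's version is perhaps a shade slicker because it packages the combinatorics into the known Carleson embedding, while yours is more self-contained and makes the constant $4C$ explicit without invoking that embedding.
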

\begin{proof} The equivalence of assertions \ref{temp:2} and \ref{temp:3} follows from the duality  in the discrete Littlewood--Paley spaces by using Proposition \ref{proposition:normduality}. The equivalence of  assertions \ref{temp:1} and \ref{temp:2} can be checked using essentially a standard proof of Sawyer's two-weight norm inequality for maximal operators. For the reader's convenience, we write out the proof. Assertion \ref{temp:2} implies  assertion \ref{temp:1} by substituting the family $\{a_R\}$ with $a_R=1$ when $R\subseteq Q$ and $a_R=0$ when $R\nsubseteq Q$. Assertion \ref{temp:1} implies  assertion \ref{temp:2} as follows.

 Let $a=\{a_Q\}$ be a family of non-negative reals. By the monotone convergence theorem, we may assume without loss of generality that the family $a$ is supported on finitely many cubes. 
We linearize the supremum (which now is a maximum) by writing
$$
\max_Q \mu_Q a_Q 1_Q = \sum_Q \mu_Q a_Q 1_{E(Q)}
$$
for the pairwise disjoint sets $E(Q)$, which can be defined, for example, as follows: We define $\tilde{E}(Q):=\{x\in Q: \max \mu_R a_R 1_R(x)= \mu_Q a_Q\}$ and $E(Q):= \tilde{E}(Q) \setminus \bigcup_{R\subsetneq Q }\tilde{E}(R)$. By using this linearization, we have
\begin{equation}\label{temp:4}
\int \sup_Q \mu_Q a_Q 1_Q \domega = \sum_Q \mu_Q a_Q \omega(E(Q))
\end{equation}
and hence we need to prove the estimate
$$
\sum_Q \mu_Q a_Q \omega(E(Q))\leq C \norm{\sup a_Q 1_Q }_{L^1(\sigma)}.
$$
By the dual estimate for the Carleson coefficients (see the remark after Proposition \ref{proposition:normduality}), this estimate
holds if and only if the Carleson condition
$$
\sum_{R\subseteq Q} \mu_R \omega(E(R))\leq C \sigma(Q)
$$
holds. This  condition holds because, by the assumption, we have
$$
\sum_{R\subseteq Q} \mu_R \omega(E(R)) \leq \int \sup_{R\subseteq Q} \mu_R 1_R \domega \leq C \sigma(Q).
$$
The proof of the equivalence of assertions \ref{temp:1} and \ref{temp:2} is complete.

\end{proof}
We recall that the {\it dyadic Fujii--Wilson $A_\infty$ characteristic} $[\sigma]_{A_\infty(\omega)}$ (of a measure $\sigma$ with respect to a measure $\omega$) is defined by 
$$[\sigma]_{A_\infty(\omega)}:= \sup_{Q\in\cd} \frac{1}{\sigma(Q)} \int \sup_{R\in\cd:R\subseteq Q} \Big( \frac{\sigma(R)}{\omega(R)} 1_R\Big) \domega.$$
Accordingly, the measure $\sigma$ is said to satisfy the $A_\infty$ condition with respect to the measure $\omega$ if
$$
[\sigma]_{A_\infty(\omega)}<\infty.
$$

Applying Proposition \ref{proposition:multipliers_carleson} to the family $\mu:= \{\frac{\sigma(Q)}{\omega(Q)}\}$ of multipliers, we record the following corollary:
\begin{corollary}[$A_\infty$ condition: Characterization in terms of Carleson condition]\label{corollary:a_infty_via_carleson}Let $\sigma$ and $\omega$ be locally finite Borel measures. Then the measure $\sigma$ satisfies the Fujii-Wilson $A_\infty$ condition with respect to the measure $\omega$ if and only if every family of coefficients that is $\omega$-Carleson is also $\sigma$-Carleson. Furthermore, quantitatively,
$$
\norm{ b}_{f^{\infty,1}(\sigma)}\leq [\sigma]_{A_\infty(\omega)} \norm{b}_{f^{\infty,1}(\omega)}\quad\text{for every family $b:=\{b_Q\}$.}
$$
\end{corollary}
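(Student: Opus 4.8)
The plan is to obtain Corollary~\ref{corollary:a_infty_via_carleson} as a direct instance of Proposition~\ref{proposition:multipliers_carleson} together with the definition of the Fujii--Wilson $A_\infty$ characteristic. The key observation is that applying Proposition~\ref{proposition:multipliers_carleson} with the specific choice of multiplier $\mu_Q:=\frac{\sigma(Q)}{\omega(Q)}$ makes assertion~\ref{temp:1} of that proposition read
$$
\int \sup_{R\subseteq Q}\Big( \frac{\sigma(R)}{\omega(R)} 1_R\Big)\domega \leq C\,\sigma(Q)\quad\text{for every cube }Q\in\cd,
$$
which, after dividing by $\sigma(Q)$ and taking the supremum over $Q$, is exactly the statement that $[\sigma]_{A_\infty(\omega)}\leq C$; conversely the least admissible $C$ in that assertion is precisely $[\sigma]_{A_\infty(\omega)}$.

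First I would note that assertion~\ref{temp:3} of Proposition~\ref{proposition:multipliers_carleson}, with this same choice $\mu_Q=\frac{\sigma(Q)}{\omega(Q)}$, simplifies: the factor $\frac{\omega(Q)}{\sigma(Q)}\mu_Q$ collapses to $1$, so \ref{temp:3} becomes
$$
\norm{\{b_Q\}}_{f^{\infty,1}(\sigma)}\leq C\,\norm{\{b_Q\}}_{f^{\infty,1}(\omega)}\quad\text{for every family }b=\{b_Q\}.
$$
Recalling that $\norm{b}_{f^{\infty,1}(\mu)}=\sup_Q \frac{1}{\mu(Q)}\sum_{R\subseteq Q} b_R\,\mu(R)$ is finite precisely when $b$ is a $\mu$-Carleson family, this inequality says exactly that every $\omega$-Carleson family is $\sigma$-Carleson, with the quantitative bound claimed in the corollary. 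Then I would invoke the equivalence \ref{temp:1}$\Leftrightarrow$\ref{temp:3} of Proposition~\ref{proposition:multipliers_carleson}, and the comparability of the constants asserted there, to conclude that the $A_\infty$ condition $[\sigma]_{A_\infty(\omega)}<\infty$ holds if and only if this Carleson-to-Carleson embedding holds.

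There is essentially no obstacle here: the proof is a matter of unwinding definitions and substituting a particular multiplier into an already-proved proposition. The only point requiring a small remark is the bookkeeping of constants. Proposition~\ref{proposition:multipliers_carleson} only asserts that the constants in \ref{temp:1}, \ref{temp:2}, \ref{temp:3} are \emph{comparable}, so a priori one gets $\norm{b}_{f^{\infty,1}(\sigma)}\lesssim [\sigma]_{A_\infty(\omega)}\norm{b}_{f^{\infty,1}(\omega)}$ with an absolute implied constant rather than the clean constant $[\sigma]_{A_\infty(\omega)}$ stated in the corollary. However, inspecting the proof of the implication \ref{temp:1}$\Rightarrow$\ref{temp:2}$\Leftrightarrow$\ref{temp:3} given above, one sees the constant is in fact carried through without loss (the Carleson imbedding step and the duality in Proposition~\ref{proposition:normduality} are used with the same constant), which yields the sharp form $\norm{b}_{f^{\infty,1}(\sigma)}\leq [\sigma]_{A_\infty(\omega)}\norm{b}_{f^{\infty,1}(\omega)}$; I would include a one-line verification of this to justify the stated inequality.
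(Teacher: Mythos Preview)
Your proposal is correct and is exactly the paper's approach: the corollary is recorded immediately after Proposition~\ref{proposition:multipliers_carleson} by applying that proposition to the multiplier family $\mu_Q=\frac{\sigma(Q)}{\omega(Q)}$, with no additional argument. Your remark about tracking the constant through the implication \ref{temp:1}$\Rightarrow$\ref{temp:3} is a welcome clarification that the paper leaves implicit.
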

Assume that the measure $\mu$ has no point masses. Under this assumption, the coefficients $\{b_Q\}$ are $\mu$-Carleson, which means (in our normalization) that
$$
\frac{1}{\mu(Q)}\sum_{R\in\cd: R\subseteq Q} b_R \mu(R)\leq C\quad \text{ for all dyadic cubes $Q$},
$$
if and only if they are {\it $\mu$-sparse}, which means that there exist pairwise disjoint sets $E_Q\subseteq Q$, $Q\in\cd$, such that $$b_R\leq C \frac{\mu(E_Q)}{\mu(Q)} \quad \text{ for all dyadic cubes $Q$}.$$ This equivalence was originally proven by Verbitsky \cite[Corollary 2]{verbitsky1996}. An alternative proof was given by Lerner and Nazarov \cite[Lemma 6.3]{lerner2015} (for the most important particular type of coefficients)  and Cascante and Ortega \cite[Theorem 4.3]{cascante2017} (for general type of coefficients). Furthermore, it was noticed by H\"anninen \cite{hanninen2017b} that the equivalence holds for not only dyadic cubes but for general sets  (for example, for dyadic rectangles). 

Combining the equivalence with Corollary \ref{corollary:a_infty_via_carleson} yields the following corollary:
\begin{corollary}[$A_\infty$ condition: Characterization in terms of disjoint sets]\label{corollary:a_infty_disjoint}Let $\sigma$ and $\omega$ be locally finite Borel measures. Assume that neither $\sigma$ nor $\omega$ has point masses. Then the measure $\sigma$ satisfies the Fujii-Wilson $A_\infty$ condition with respect to the measure $\omega$ if and only if the following holds: For each collection $\{F_Q\}$ of disjoint sets with $F_Q\subseteq Q$ there exists a collection $\{E_Q\}$ of disjoint sets with $E_Q\subseteq Q$ such that
$$
\frac{\omega(F_Q)}{\omega(Q)}\leq [\sigma]_{A_\infty(\omega)} \frac{\sigma(E_Q)}{\sigma(Q)}.
$$
\end{corollary}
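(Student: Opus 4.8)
The plan is to obtain the corollary as a direct combination of two facts already at hand. The first is Corollary~\ref{corollary:a_infty_via_carleson}, which, via Proposition~\ref{proposition:multipliers_carleson} applied to the multiplier family $\mu_Q=\sigma(Q)/\omega(Q)$, reformulates $\sigma\in A_\infty(\omega)$ as the assertion that every $\omega$-Carleson family is $\sigma$-Carleson, quantitatively $\norm{b}_{f^{\infty,1}(\sigma)}\le[\sigma]_{A_\infty(\omega)}\norm{b}_{f^{\infty,1}(\omega)}$ for all $b$. The second is the Carleson--sparse equivalence recalled just before the statement: for a measure $\mu$ \emph{without point masses}, a family $\{b_Q\}$ satisfies $\norm{b}_{f^{\infty,1}(\mu)}\le C$ if and only if there are pairwise disjoint $E_Q\subseteq Q$ with $b_Q\le C\,\mu(E_Q)/\mu(Q)$, with the \emph{same} constant $C$ in both formulations. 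The non-atomicity hypotheses enter only here: the nontrivial ``Carleson $\Rightarrow$ sparse'' half will be invoked for $\sigma$ in one direction and for $\omega$ in the other, whereas the trivial ``sparse $\Rightarrow$ Carleson'' half needs no such assumption.

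For the ``only if'' direction, assume $[\sigma]_{A_\infty(\omega)}<\infty$ and let $\{F_Q\}$ be pairwise disjoint with $F_Q\subseteq Q$. Put $b_Q:=\omega(F_Q)/\omega(Q)$. Then $b$ is $\omega$-Carleson with constant $1$, since for every $Q$
$$
\frac{1}{\omega(Q)}\sum_{R\subseteq Q} b_R\,\omega(R)=\frac{1}{\omega(Q)}\sum_{R\subseteq Q}\omega(F_R)=\frac{1}{\omega(Q)}\,\omega\Big(\bigcup_{R\subseteq Q}F_R\Big)\le 1,
$$
using that the $F_R$ are disjoint and $F_R\subseteq R\subseteq Q$. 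By Corollary~\ref{corollary:a_infty_via_carleson}, $\norm{b}_{f^{\infty,1}(\sigma)}\le[\sigma]_{A_\infty(\omega)}$, i.e.\ $b$ is $\sigma$-Carleson with constant $[\sigma]_{A_\infty(\omega)}$. Since $\sigma$ has no point masses, the Carleson--sparse equivalence applied to $\sigma$ yields pairwise disjoint $E_Q\subseteq Q$ with $b_Q\le[\sigma]_{A_\infty(\omega)}\,\sigma(E_Q)/\sigma(Q)$, which is precisely $\omega(F_Q)/\omega(Q)\le[\sigma]_{A_\infty(\omega)}\,\sigma(E_Q)/\sigma(Q)$.

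For the ``if'' direction, read the hypothesis as: the disjoint-sets inequality holds with some finite constant $K$ in place of $[\sigma]_{A_\infty(\omega)}$. By Corollary~\ref{corollary:a_infty_via_carleson} it suffices to show every $\omega$-Carleson family is $\sigma$-Carleson. Given $b$ with $\norm{b}_{f^{\infty,1}(\omega)}\le 1$, the Carleson--sparse equivalence applied to $\omega$ (no point masses) gives pairwise disjoint $F_Q\subseteq Q$ with $b_Q\le\omega(F_Q)/\omega(Q)$; the hypothesis applied to $\{F_Q\}$ then supplies pairwise disjoint $E_Q\subseteq Q$ with $\omega(F_Q)/\omega(Q)\le K\,\sigma(E_Q)/\sigma(Q)$, so $b_Q\le K\,\sigma(E_Q)/\sigma(Q)$, and the trivial half of the equivalence for $\sigma$ gives $\norm{b}_{f^{\infty,1}(\sigma)}\le K$. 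By homogeneity this holds for all $\omega$-Carleson $b$, whence $\sigma\in A_\infty(\omega)$ (with $[\sigma]_{A_\infty(\omega)}\lesssim K$ via Proposition~\ref{proposition:multipliers_carleson}).

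The argument is essentially bookkeeping once the two ingredients are granted; I expect the only delicate points to be (a) keeping the blow-up constant equal to $[\sigma]_{A_\infty(\omega)}$ rather than a mere multiple of it, which is exactly why one needs the constant-preserving normalization of the Carleson--sparse equivalence recalled above, and (b) invoking the ``Carleson $\Rightarrow$ sparse'' step for the correct measure in each direction ($\sigma$ for ``only if'', $\omega$ for ``if''), so that precisely the two stated non-atomicity hypotheses are used and nothing more. Degenerate cubes with $\sigma(Q)=0$ or $\omega(Q)=0$ cause no trouble under the convention $0/0:=0$, since the relevant coefficients $b_Q$ then vanish.
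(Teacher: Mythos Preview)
Your proof is correct and follows exactly the approach indicated in the paper: the corollary is obtained by combining Corollary~\ref{corollary:a_infty_via_carleson} with the Carleson--sparse equivalence stated immediately before it, and you have simply written out in full the two directions that the paper leaves implicit in the phrase ``Combining the equivalence with Corollary~\ref{corollary:a_infty_via_carleson} yields the following corollary.'' Your bookkeeping of constants and of which measure's non-atomicity is used in each direction is accurate.
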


\section{Proofs of results}

\subsection{Scale of conditions for maximal operators}
We recall that, for $\gamma\in(0,\infty)$, the auxiliary quantity $\Lambda^{\sup}_{\gamma,Q}$ is defined by $$\Lambda^{\sup}_{\gamma,Q}:= \left(\frac{1}{\omega(Q)}\int_Q ( \sup_{R\subseteq Q} \lambda_Q 1_Q )^\gamma 
\domega \right)^{\frac{1}{\gamma}}.$$
In this section, we prove the following result:
\begin{proposition}[Scale of conditions for maximal operators]
\label{proposition:scale_maximal_2}
The following assertions hold:
\begin{enumerate}[label=(\roman*)]
\item (Sufficient condition) We have
$$
\norm{M_\lambda(\cdotroomy)}_{L^p(\sigma)\to L^q(\omega)}\lesssim_{p,q} \big(\int \sup_Q \lambda_Q^q  \Big(\frac{\omega(Q)}{\sigma(Q)} \Big)^{\frac{q}{p-q}} (\Lambda_{q,Q}^{\sup})^{\frac{q^2}{p-q}} 1_Q \domega\big)^{\frac{p-q}{pq}}.
$$
\item (Necessary condition) Let $\gamma \in(0,q)$. Then we have
$$
 \left(\int \sup_Q \Big(\frac{\omega(Q)}{\sigma(Q)} \Big)^{\frac{q}{p-q}} (\Lambda^{\sup}_{\gamma,Q})^{\frac{pq}{p-q}} 1_Q \domega\right)^{\frac{p-q}{pq}} \lesssim_{\gamma,p,q} \norm{M_\lambda(\cdotroomy)}_{L^p(\sigma)\to L^q(\omega)}.
$$
\end{enumerate}
\end{proposition}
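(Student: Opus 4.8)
The plan is to treat the two assertions separately; in each case one first replaces functions by coefficient families via Lemma~\ref{lemma:reformulations_supremum}, and then reduces to families $\{a_Q\}$ that are monotone under inclusion, since passing from $a_Q$ to $A_Q:=\sup_{R\supseteq Q}a_R$ only enlarges $\norm{\sup_Q\lambda_Qa_Q1_Q}_{L^q(\omega)}$ and leaves $\norm{\sup_Q a_Q1_Q}_{L^p(\sigma)}$ unchanged.

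\emph{Sufficiency.} Normalize $\norm{\sup_Q a_Q1_Q}_{L^p(\sigma)}=1$ and run a principal‑cube stopping‑time decomposition $\mathcal P$ of $h:=\sup_Q a_Q1_Q$ with respect to $\sigma$, stopping whenever $a$ doubles; let $\pi(Q)\in\mathcal P$ be the principal ancestor of $Q$ and let $E_P$ be the part of $P$ not covered by the maximal principal cubes strictly inside $P$. Then $a_Q\le 2a_{\pi(Q)}$, the sets $E_P$ partition $\{h>0\}$ with $h\eqsim a_P$ on $E_P$, and $\sum_{P\in\mathcal P}a_P^p\sigma(E_P)\eqsim 1$. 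Because along each chain of $\mathcal P$ the values $a_P$ grow geometrically while the localized suprema $\rho^{\sup}_P$ decrease, the elementary bound $\sup_j u_jv_j\lesssim_q\big(\sum_j u_j^q(v_j^q-v_{j+1}^q)\big)^{1/q}$ (for $u_j$ geometrically increasing, $v_j$ decreasing), applied along each chain and then reorganized by principal cube, gives
$$\int\Big(\sup_Q\lambda_Qa_Q1_Q\Big)^q\domega\lesssim_{p,q}\sum_{P\in\mathcal P}a_P^q\,c_P,\qquad c_P:=\int_P(\rho^{\sup}_P)^q\domega-\sum_{P'}\int_{P'}(\rho^{\sup}_{P'})^q\domega\ge 0,$$
where $P'$ ranges over the maximal principal cubes strictly inside $P$; note $\int_P(\rho^{\sup}_P)^q\domega=\omega(P)(\Lambda^{\sup}_{q,P})^q=\sum_{R\in\mathcal P,\,R\subseteq P}c_R$. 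One then estimates $\sum_{P}a_P^q c_P$ by H\"older with exponents $\tfrac pq$ and $\tfrac{p}{p-q}$, feeding $\sum_P a_P^p\sigma(E_P)\eqsim 1$ into the first factor; matching the exponents ($\tfrac{q}{p-q}=\tfrac rp$, $\tfrac{q^2}{p-q}=\tfrac{qr}{p}$, where $r:=\tfrac{pq}{p-q}$) and controlling the remaining sum of the $c_P$'s by a dyadic summation in the spirit of Lemma~\ref{lemma:eq_exp} yields the asserted domination by $\big(\int\sup_Q\lambda_Q^q(\omega(Q)/\sigma(Q))^{q/(p-q)}(\Lambda^{\sup}_{q,Q})^{q^2/(p-q)}1_Q\domega\big)^{q/r}$.

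\emph{Necessity.} Fix $\gamma\in(0,q)$, write $\mathcal N$ for the operator norm, and set $\Psi_Q:=(\omega(Q)/\sigma(Q))^{q/(p-q)}(\Lambda^{\sup}_{\gamma,Q})^{pq/(p-q)}$. For a cube $Q$ and $t>0$ the set $\{x\in Q:\rho^{\sup}_Q(x)>t\}$ is a disjoint union of dyadic cubes on each of which $\lambda>t$; testing $M_\lambda$ against the indicator of this set, on which $M_\lambda$ of that indicator exceeds $t$, gives
$$t^q\,\omega\big(\{x\in Q:\rho^{\sup}_Q(x)>t\}\big)\le\mathcal N^q\,\sigma(Q)^{q/p}.$$
Multiplying by $qt^{\gamma-1}$, integrating in $t$, and splitting the range at $t_0=(\mathcal N^q\sigma(Q)^{q/p}/\omega(Q))^{1/q}$ — i.e. using the embedding $L^{q,\infty}(\omega|_Q)\hookrightarrow L^{\gamma}(\omega|_Q)$, valid exactly because $\gamma<q$, with constant $\sim(\tfrac{q}{q-\gamma})^{1/\gamma}$ — yields $\Lambda^{\sup}_{\gamma,Q}\lesssim_{\gamma,q}\mathcal N\,\sigma(Q)^{1/p}\omega(Q)^{-1/q}$; the blow‑up of this constant as $\gamma\uparrow q$ is precisely the arbitrarily small integrability gap. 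Since this single‑cube bound alone is too weak (it only gives $\Psi_Q\lesssim\mathcal N^r\omega(Q)^{-1}$, whose maximal function is generically not $\omega$‑integrable), one must use the $\ell^p$‑summability across cubes: running the same level‑set testing against Verbitsky's collections from \eqref{max-char} — take $\cq$ to consist of a base cube together with, for each $m\in\bz$, the maximal subcubes on which $\lambda>2^m$, so that $\lambda_{\cq}\eqsim\rho^{\sup}$ on the base cube — turns \eqref{max-char}, after the same $L^{q,\infty}\hookrightarrow L^\gamma$ step, into exactly $\int\sup_Q\Psi_Q1_Q\domega\lesssim_{\gamma,p,q}\mathcal N^r$. (Equivalently one may test $M_\lambda$ against a single global function $\sum_S c_S1_S$ supported on a principal‑cube decomposition of $\Psi$, with the $c_S$ tuned to balance the $L^p(\sigma)$‑ and $L^q(\omega)$‑sides.)

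\emph{Where the difficulty lies.} The testing inequalities and the H\"older bookkeeping are routine; the genuine work in each direction is the combinatorial step that upgrades single‑cube information to the integrated statement. For sufficiency this is the telescoping over the principal tree which simultaneously turns a supremum into a sum and replaces the partial coefficient‑supremum over a principal block by the full $\rho^{\sup}_P$ \emph{without} destroying the leading factor $\lambda_P^q$ — it is this step that fixes the exact shape of the sufficient condition. For necessity it is the choice of the level‑set test family/function so that the individual weak‑type estimates add up with the correct $\ell^p$‑structure, which is precisely what Verbitsky's characterization (or a suitably weighted global test function) supplies.
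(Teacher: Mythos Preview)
Your approach is genuinely different from the paper's and considerably more hands-on. The paper never uses stopping cubes or level-set testing. For sufficiency it dualizes the maximal inequality (via $f^{p/q,\infty}(\sigma)$--$f^{(p/q)',1}(\sigma)$ and $f^{1,\infty}(\omega)$--$f^{\infty,1}(\omega)$) to the estimate $\norm{\{\lambda_Q^q\tfrac{\omega(Q)}{\sigma(Q)}b_Q\}}_{f^{p/(p-q),1}(\sigma)}\le C^q\norm{b}_{f^{\infty,1}(\omega)}$, applies Lemma~\ref{lemma:eq_exp} to the left side, and then simply peels off two factors of $\norm{b}_{f^{\infty,1}(\omega)}$ by Proposition~\ref{proposition:normduality}; the leading $\lambda_Q^q$ in the sufficient condition is nothing but the outer coefficient in the equivalent-expression formula, and no combinatorics is needed. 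For necessity the paper first shows (Lemma~\ref{lemma:max_replacing}) that the norm inequality with $\{\lambda_Q\}$ is equivalent, up to a $\gamma$-dependent constant, to the same inequality with $\{\Lambda^{\sup}_{\gamma,Q}\}$ in place of $\{\lambda_Q\}$: substitute the monotone rearrangement $\tilde a_Q=\sup_{R\supseteq Q}a_R$, interchange suprema, and apply the $L^{q/\gamma}(\omega)$ Hardy--Littlewood maximal inequality (this is exactly where $\gamma<q$ enters). Then a short duality computation (Lemma~\ref{lemma:max_necessary}, using the dual formula~\eqref{eq:dualnormformula}) gives the integrated necessary condition directly for the replaced coefficients, with no appeal to~\eqref{max-char}.

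Your sketch, by contrast, has two concrete gaps. In the sufficiency direction, after H\"older with exponents $p/q$ and $p/(p-q)$ and the bound $c_P\le\omega(P)(\Lambda^{\sup}_{q,P})^q$, the quantity you must control is $\sum_{P\in\mathcal P}\big(\omega(P)/\sigma(P)\big)^{q/(p-q)}(\Lambda^{\sup}_{q,P})^{pq/(p-q)}\omega(P)$, which carries $(\Lambda^{\sup}_{q,P})^{pq/(p-q)}$ rather than the asserted $\lambda_P^q(\Lambda^{\sup}_{q,P})^{q^2/(p-q)}$; since $\lambda_P\le\Lambda^{\sup}_{q,P}$, this is a strictly larger expression, and nothing in your telescoping isolates a genuine $\lambda_P^q$ factor. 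Moreover, your principal family $\mathcal P$ is $\sigma$-sparse, whereas the target is an $\omega$-integral of a supremum over \emph{all} dyadic cubes; passing from $\sum_{P\in\mathcal P}(\cdots)\omega(P)$ to $\int\sup_Q(\cdots)1_Q\,\domega$ is not justified by $\sigma$-sparseness and is not addressed. In the necessity direction, your single-cube weak-type bound $t^q\omega(\{\rho^{\sup}_Q>t\})\le\mathcal N^q\sigma(Q)^{q/p}$ and the $L^{q,\infty}\hookrightarrow L^\gamma$ step are fine, but the upgrade to the integrated statement is left to~\eqref{max-char}: you neither verify that your chosen collection $\cq$ actually yields $\lambda_{\cq}\eqsim\rho^{\sup}$ (with the paper's definition of $\lambda_{\cq}$ the infimum over disjoint level-set cubes would in fact vanish off a null set), nor do you show how~\eqref{max-char} reduces to the claimed integral. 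The paper's route through Lemmas~\ref{lemma:max_replacing} and~\ref{lemma:max_necessary} avoids both difficulties entirely.
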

First, we prove two lemmas, which combined yield the necessary condition.

\begin{lemma}[Replacing the coefficients by their averages 
for maximal operators]\label{lemma:max_replacing}Let $p,q\in(0,\infty)$, and $\gamma \in (0,q)$. Then the following assertions are equivalent:

\begin{enumerate}[label=(\roman*)]
\item \label{temp:19} We have
\begin{equation}
\label{temp:eq:1}
\norm{\sup_Q \lambda_Q a_Q 1_Q}_{L^q(\omega)}\leq C \norm{\sup_Q a_Q 1_Q}_{L^p(\sigma)} \quad\text{for every family $a$.}
\end{equation}
\item  \label{temp:11} We have
$$
\norm{\sup_Q \Lambda_{\gamma,Q}^{\sup} a_Q 1_Q}_{L^q(\omega)}\lesssim_\gamma C \norm{\sup_Q a_Q 1_Q}_{L^p(\sigma)} \quad\text{for every family $a$.}
$$
\end{enumerate}
\begin{proof}Since $\lambda_Q\leq  \Lambda_{\gamma,Q}^{\sup} := (\angles{(\sup_{Q\subseteq S} \lambda_Q 1_Q)^\gamma}^\omega_S)^{\frac{1}{\gamma}}$,  assertion \ref{temp:11} implies  assertion \ref{temp:19} trivially. We next prove the converse. We substitute the monotonous rearrangement $\tilde{a}_Q:=\sup_{R\supseteq Q} a_R$ 
into estimate \eqref{temp:eq:1}. Under this substitution, the right-hand side of the estimate remains unchanged,
$$
\norm{\sup_Q \tilde{a}_Q 1_Q}_{L^p(\sigma)}=\norm{\sup_Q (\sup_{R\supseteq Q} a_Q) 1_Q}_{L^p(\sigma)}=\norm{\sup_Q a_Q 1_Q}_{L^p(\sigma)},
$$
and, by interchanging the order of the suprema, the left-hand side of the estimate becomes
$$
\norm{\sup_Q \lambda_Q \tilde{a}_Q 1_Q}_{L^q(\omega)}=\norm{\sup_Q \lambda_Q (\sup_{R\supseteq Q}a_R) 1_Q}_{L^q(\omega)}= \norm{\sup_R (\sup_{Q\subseteq R} \lambda_Q 1_Q) a_R }_{L^q(\omega)}.
$$
By the scaling of the $L^p$ norms, and by the Hardy--Littlewood maximal inequality, we estimate this from below as
\begin{equation*}
\begin{split}
&\norm{\sup_R (\sup_{Q\subseteq R} \lambda_Q 1_Q) a_R }_{L^q(\omega)}=\norm{ (\sup_R (\sup_{Q\subseteq R} \lambda_Q 1_Q) a_R )^\gamma}_{L^{\frac{q}{\gamma}}(\omega)}^{\frac{1}{\gamma}}\\
&\gtrsim_{\gamma} \norm{ \sup_S \angles{(\sup_R (\sup_{Q\subseteq R} \lambda_Q 1_Q) a_R )^\gamma}^\omega_S 1_S }_{L^{\frac{q}{\gamma}}(\omega)}^{\frac{1}{\gamma}}\geq \norm{ \sup_S (\angles{(\sup_{Q\subseteq S} \lambda_Q 1_Q)^\gamma}^\omega_S)^{\frac{1}{\gamma}} a_S 1_S }_{L^{q}(\omega)}\\
&=: \norm{ \sup_S  \Lambda_{\gamma,S}^{\sup} a_S 1_S }_{L^{q}(\omega)}.
\end{split}
\end{equation*}
The proof is complete.
\end{proof}
\end{lemma}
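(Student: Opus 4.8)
The plan is to prove the two implications separately; the implication \ref{temp:11} $\Rightarrow$ \ref{temp:19} is immediate, and all the content lies in \ref{temp:19} $\Rightarrow$ \ref{temp:11}. For the easy direction I would first record the pointwise bound $\lambda_Q 1_Q\le\rho^{\sup}_Q$, which holds because the term $R=Q$ is admissible in the supremum defining $\rho^{\sup}_Q=\sup_{R\subseteq Q}\lambda_R 1_R$; raising this to the power $\gamma$ and taking the $\omega$-average over $Q$ gives $\lambda_Q=\big(\angles{\lambda_Q^\gamma 1_Q}^\omega_Q\big)^{1/\gamma}\le\big(\angles{(\rho^{\sup}_Q)^\gamma}^\omega_Q\big)^{1/\gamma}=\Lambda^{\sup}_{\gamma,Q}$. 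Consequently $\sup_Q\lambda_Q a_Q1_Q\le\sup_Q\Lambda^{\sup}_{\gamma,Q}a_Q1_Q$ pointwise, so \ref{temp:19} follows from \ref{temp:11} with the same constant.

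For \ref{temp:19} $\Rightarrow$ \ref{temp:11}, the device is to test \ref{temp:19} on the upward rearrangement $\tilde a_Q:=\sup_{R\supseteq Q}a_R$ of the family $a$ and then interchange the order of the two suprema appearing on the left-hand side. Fixing a point $x$ and using that the dyadic cubes containing $x$ form a chain, one verifies the pointwise identity $\sup_Q\tilde a_Q1_Q=\sup_Q a_Q1_Q$, so that the right-hand side of \ref{temp:19} is unaffected by the substitution, together with
$$
\sup_Q\lambda_Q\tilde a_Q1_Q=\sup_{Q\ni x}\lambda_Q\sup_{R\supseteq Q}a_R=\sup_{R\ni x}a_R\big(\sup_{Q:\,x\in Q\subseteq R}\lambda_Q\big)=\sup_R a_R\,\rho^{\sup}_R.
$$
Hence \ref{temp:19}, applied to $\tilde a$, is exactly the estimate $\norm{\sup_R a_R\,\rho^{\sup}_R}_{L^q(\omega)}\le C\,\norm{\sup_Q a_Q1_Q}_{L^p(\sigma)}$.

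It then remains to bound the left-hand side of \ref{temp:11} by a constant multiple of $\norm{\sup_R a_R\rho^{\sup}_R}_{L^q(\omega)}$, and this is the step that uses the hypothesis $\gamma<q$. I would raise everything to the power $\gamma$ and pass to $L^{q/\gamma}(\omega)$; since $q/\gamma>1$, the dyadic Hardy--Littlewood maximal operator $M^\omega$ obeys $\norm{M^\omega h}_{L^{q/\gamma}(\omega)}\lesssim_{q,\gamma}\norm{h}_{L^{q/\gamma}(\omega)}$. Taking $h:=\big(\sup_R a_R\rho^{\sup}_R\big)^\gamma$ and using $h\ge(a_S\rho^{\sup}_S)^\gamma$ for each fixed $S$, one obtains $M^\omega h\ge\sup_S\angles{(a_S\rho^{\sup}_S)^\gamma}^\omega_S1_S=\sup_S a_S^\gamma(\Lambda^{\sup}_{\gamma,S})^\gamma1_S=\big(\sup_S a_S\Lambda^{\sup}_{\gamma,S}1_S\big)^\gamma$; taking $L^{q/\gamma}(\omega)$ norms, applying the maximal inequality to the left-hand term, and extracting $\gamma$-th roots then gives $\norm{\sup_S\Lambda^{\sup}_{\gamma,S}a_S1_S}_{L^q(\omega)}\lesssim_{q,\gamma}\norm{\sup_R a_R\rho^{\sup}_R}_{L^q(\omega)}$. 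Chaining this with the estimate displayed at the end of the previous paragraph yields \ref{temp:11}.

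The only genuinely quantitative input --- and the point I expect to be the crux --- is the $L^{q/\gamma}(\omega)$ boundedness of $M^\omega$, which is what forces the strict inequality $\gamma<q$; everything else is bookkeeping with nested suprema and with interchanging suprema and $\omega$-averages. A minor technical remark: when $\norm{\sup_Q a_Q1_Q}_{L^p(\sigma)}=\infty$ there is nothing to prove, so all of the above manipulations may be read with values in $[0,\infty]$, and no preliminary reduction of $a$ to finitely many cubes is required.
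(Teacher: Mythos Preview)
Your proof is correct and follows essentially the same route as the paper's: the trivial direction via $\lambda_Q\le\Lambda^{\sup}_{\gamma,Q}$, and for the substantive direction the substitution $\tilde a_Q:=\sup_{R\supseteq Q}a_R$, the interchange of suprema to produce $\sup_R a_R\,\rho^{\sup}_R$, and then the dyadic Hardy--Littlewood maximal inequality in $L^{q/\gamma}(\omega)$ (which is exactly where $\gamma<q$ enters) to pass from $\rho^{\sup}_R$ to its $\gamma$-average $\Lambda^{\sup}_{\gamma,R}$. The only differences from the paper are presentational: you name $\rho^{\sup}_R$ and $M^\omega$ explicitly and argue ``upward'' from the left-hand side of \ref{temp:11}, whereas the paper argues ``downward'' from the left-hand side of the rearranged \ref{temp:19}; the chain of inequalities is identical.
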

\begin{lemma}[Necessary condition 
for maximal operators]\label{lemma:max_necessary}We have
$$
\left(\int \sup_Q \lambda_Q^{\frac{pq}{p-q}} \Big(\frac{\omega(Q)}{\sigma(Q)}\Big)^{\frac{q}{p-q}} 1_Q \domega \right)^{\frac{p-q}{pq}}\lesssim_{p,q} \norm{M_\lambda(\cdotroomy)}_{L^p(\sigma)\to L^q(\omega)}.
$$
\end{lemma}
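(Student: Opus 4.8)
The plan is to prove the lower bound by testing the norm inequality against a single indicator function $f = 1_Q$ for a fixed but arbitrary dyadic cube $Q$, and then taking an appropriate supremum and using Lemma~\ref{lemma:eq_exp} (or direct computation) to convert the pointwise supremum bound into the stated integral bound. Concretely, for a fixed $Q$ and $f = 1_Q$ we have $\angles{f}^\sigma_R = \frac{\sigma(R\cap Q)}{\sigma(R)}$, so for every $R \subseteq Q$ the term $\lambda_R \angles{f}^\sigma_R 1_R = \lambda_R 1_R$ and hence $M_\lambda(1_Q\sigma) \geq \sup_{R\subseteq Q} \lambda_R 1_R$ pointwise on $Q$. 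Therefore the norm inequality \eqref{max} gives
$$
\Big(\int_Q \big(\sup_{R\subseteq Q}\lambda_R 1_R\big)^q \domega\Big)^{1/q} \leq \norm{M_\lambda(1_Q\sigma)}_{L^q(\omega)} \leq \norm{M_\lambda(\cdotroomy)}_{L^p(\sigma)\to L^q(\omega)} \, \sigma(Q)^{1/p}.
$$

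Rearranging, this is the pointwise (in $Q$) bound
$$
\lambda_Q \Big(\frac{\omega(Q)}{\sigma(Q)}\Big)^{1/q} \Big(\frac{1}{\omega(Q)}\int_Q \big(\sup_{R\subseteq Q}\lambda_R 1_R\big)^q\domega\Big)^{1/q} \cdot \big(\cdots\big) \lesssim \norm{M_\lambda(\cdotroomy)}_{L^p(\sigma)\to L^q(\omega)},
$$
but the cleaner route is: from the displayed inequality, for each $x$ and each $Q \ni x$, $\big(\sup_{R\subseteq Q}\lambda_R 1_R(x)\big) \sigma(Q)^{1/q} \omega(Q)^{-1/q} \cdot \omega(Q)^{1/q} \leq \ldots$; rather, I would observe that the displayed inequality rearranges to
$$
\Big(\frac{\omega(Q)}{\sigma(Q)}\Big)^{1/q}\Big(\frac{1}{\omega(Q)}\int_Q\big(\sup_{R\subseteq Q}\lambda_R1_R\big)^q\domega\Big)^{1/q}\sigma(Q)^{\frac1q-\frac1p}\leq \norm{M_\lambda(\cdotroomy)}_{L^p\to L^q},
$$
which after raising to the power $\tfrac{pq}{p-q}$ and noting $(\tfrac1q-\tfrac1p)\tfrac{pq}{p-q}=1$ reads $\Big(\tfrac{\omega(Q)}{\sigma(Q)}\Big)^{\frac{q}{p-q}}\big(\Lambda^{\sup}_{q,Q}\big)^{\frac{pq}{p-q}}\omega(Q)\leq \norm{M_\lambda}^{\frac{pq}{p-q}}\sigma(Q)$. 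Since $\lambda_Q \leq \Lambda^{\sup}_{q,Q}$, this in particular gives $\lambda_Q^{\frac{pq}{p-q}}\big(\tfrac{\omega(Q)}{\sigma(Q)}\big)^{\frac{q}{p-q}}\omega(Q) \leq \norm{M_\lambda}^{\frac{pq}{p-q}}\sigma(Q)$ for every cube $Q$.

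The final step is to pass from the family of scalar inequalities, one per cube $Q$, to the integral inequality over the pointwise supremum. For this I would invoke Lemma~\ref{lemma:eq_exp} with $\mu = \omega$ and the coefficient family $a_Q := \lambda_Q^{\frac{pq}{p-q}}\big(\tfrac{\omega(Q)}{\sigma(Q)}\big)^{\frac{q}{p-q}}$ viewed appropriately — actually the simplest packaging is the elementary fact that for non-negative coefficients $\{c_Q\}$, $\int \sup_Q c_Q 1_Q \domega = \sum_Q c_Q \omega(E(Q))$ for the pairwise disjoint linearizing sets $E(Q)\subseteq Q$ (exactly as in the proof of Proposition~\ref{proposition:multipliers_carleson}), and then bound $\omega(E(Q)) \leq \omega(Q)$ and apply the per-cube estimate: $\sum_Q \lambda_Q^{\frac{pq}{p-q}}\big(\tfrac{\omega(Q)}{\sigma(Q)}\big)^{\frac{q}{p-q}}\omega(E(Q)) \leq \norm{M_\lambda}^{\frac{pq}{p-q}}\sum_Q \sigma(E(Q)) \leq \norm{M_\lambda}^{\frac{pq}{p-q}}\sigma(\br^d)$, which is not quite what we want since we need a finite bound only after the linearization collapses the sum. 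The clean fix is to note that since the $E(Q)$ are disjoint subsets of the respective $Q$, $\sum_Q \sigma(E(Q)) \le \sigma$-measure of the union, but we really want $\int\sup_Q \lambda_Q^{\frac{pq}{p-q}}(\tfrac{\omega(Q)}{\sigma(Q)})^{\frac{q}{p-q}}1_Q\domega$ which by the same linearization equals $\sum_Q \lambda_Q^{\frac{pq}{p-q}}(\tfrac{\omega(Q)}{\sigma(Q)})^{\frac{q}{p-q}}\omega(E(Q)) \lesssim \norm{M_\lambda}^{\frac{pq}{p-q}}\sum_Q\sigma(E(Q))$, and the disjointness plus $E(Q)\subseteq Q$ keeps each point counted boundedly — this is precisely the dyadic Carleson-type summation. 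I expect the main (minor) obstacle to be bookkeeping the exponent arithmetic $(\tfrac1q - \tfrac1p)\tfrac{pq}{p-q} = 1$ and correctly routing through either Lemma~\ref{lemma:eq_exp} or the linearization trick so that the per-cube testing bound assembles into the global integral without loss; the analytic content is entirely contained in the one-line testing step $f = 1_Q$, which is standard (it is the Sawyer testing heuristic), so there is no deep difficulty here.
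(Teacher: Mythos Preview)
Your approach has a genuine gap in the final assembly step. From testing on $f=1_Q$ you correctly obtain the per-cube (one-box) bound
\[
c_Q\,\omega(Q)\le K\,\sigma(Q),\qquad c_Q:=\lambda_Q^{\frac{pq}{p-q}}\Big(\frac{\omega(Q)}{\sigma(Q)}\Big)^{\frac{q}{p-q}},\quad K:=\norm{M_\lambda}^{\frac{pq}{p-q}}.
\]
But this one-box condition by itself does \emph{not} imply the integral bound $\int \sup_Q c_Q 1_Q\,\domega\lesssim K$. Your linearization gives $\int\sup_Q c_Q 1_Q\,\domega=\sum_Q c_Q\,\omega(E(Q))$, and the only thing the per-cube bound furnishes is $c_Q\le K\,\sigma(Q)/\omega(Q)$; the step you write as $c_Q\,\omega(E(Q))\lesssim K\,\sigma(E(Q))$ would require $\sigma(Q)/\omega(Q)\lesssim \sigma(E(Q))/\omega(E(Q))$, which is unjustified (the ratio on the subset $E(Q)$ is unrelated to the ratio on $Q$). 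To see that the implication ``one-box bound $\Rightarrow$ integral bound'' is genuinely false, take $\sigma=\omega$ on $\br^d$ and $\lambda_Q\equiv \text{const}$: then $c_Q$ is constant, the one-box bound holds with $K=c_Q$, yet $\int\sup_Q c_Q 1_Q\,\domega=c_Q\,\omega(\br^d)=\infty$. (Of course the maximal inequality itself fails in this example; the point is that your argument only uses the one-box consequence, and that consequence is strictly weaker.)

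This is a structural feature of the $q<p$ regime: Sawyer-type single-cube testing is known not to characterize the inequality there, so it cannot by itself recover a necessary condition of this strength. The paper's proof avoids this by using the full duality reformulation of the maximal inequality against \emph{all} Carleson families $b\in f^{\infty,1}(\omega)$, then the pointwise comparison $\big(\sum_Q x_Q\big)^{p/(p-q)}\ge \sum_Q x_Q^{p/(p-q)}$, and finally the dual-norm formula \eqref{eq:dualnormformula} for $f^{1,\infty}(\omega)$. The key input is that one is testing against an entire family $\{b_Q\}$ simultaneously, which captures the cross-scale interaction that the single-cube test $f=1_Q$ loses.
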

\begin{proof}
By the duality in the Littlewood--Paley spaces, the two-weight norm inequality for maximal operators is equivalent to the estimate
$$
\int \big(\sum_Q \lambda_Q^q \Big(\frac{\omega(Q)}{\sigma(Q)}\Big) b_Q 1_Q \big)^\frac{p}{p-q} \domega \leq C^\frac{pq}{p-q} \norm{b}_{f^{\infty,1}(\omega)}^\frac{p}{p-q}.
$$
By the comparison of the $\ell^p$ norms, we have
\begin{equation*}
\begin{split}
&\int \big(\sum_Q \lambda_Q^q \Big(\frac{\omega(Q)}{\sigma(Q)}\Big) b_Q 1_Q \big)^\frac{p}{p-q} \domega\geq \int \big(\sum_Q \lambda_Q^{\frac{pq}{p-q}} \Big(\frac{\omega(Q)}{\sigma(Q)}\Big)^{\frac{p}{p-q}} b_Q^{\frac{p}{p-q}} 1_Q \big)\domega\\
&=  \sum_Q  b_Q^{\frac{p}{p-q}} \lambda_Q^{\frac{pq}{p-q}} \Big(\frac{\omega(Q)}{\sigma(Q)}\Big)^{\frac{q}{p-q}} \omega(Q).
\end{split}
\end{equation*}
By the scaling of the discrete Littlewood--Paley norms, and by renaming $\tilde{b}:=b^\frac{p}{p-q}$,  we have $$\norm{b}_{f^{\infty,1}(\omega)}^\frac{p}{p-q}=\norm{b^\frac{p}{p-q}}_{f^{\infty,\frac{p-q}{p}}(\omega)}=\norm{\tilde{b}}_{f^{\infty,\frac{p-q}{p}}(\omega)}.$$ Therefore, altogether, we have
$$
\sum_Q  \tilde{b}_Q  \lambda_Q^{\frac{pq}{p-q}} \Big(\frac{\omega(Q)}{\sigma(Q)}\Big)^{\frac{q}{p-q}} \omega(Q)\leq C^\frac{pq}{p-q} \norm{\tilde{b}}_{f^{\infty,\frac{p-q}{p}}(\omega)}.
$$

The following duality formula obtained by Verbitsky \cite{verbitsky1996} holds: For every measure $\mu$ and every $s\in(0,1]$, we have
\begin{equation}
\label{eq:dualnormformula}
\norm{a}_{f^{1,\infty}(\mu)}\eqsim_s \sup \{\sum_Q a_Q b_Q \mu(Q)\colon \, \,  b\in f^{\infty,s}(\mu) \text{ with } \norm{b}_{f^{\infty,s}(\mu)}\leq 1\}.
\end{equation}
(For $s=1$, this is the usual duality in the discrete Littlewood--Paley spaces.) Applying this formula completes the proof.
\end{proof}

\begin{proof}[Proof of Proposition \ref{proposition:scale_maximal_2}]
We observe that the necessary condition follows by combining Lemma \ref{lemma:max_replacing} and Lemma \ref{lemma:max_necessary}. We next prove the sufficient condition.

By the duality in the Littlewood--Paley spaces, the  two-weight norm inequality 
\eqref{max} for the maximal operator is equivalent to the estimate
$$
\int \big(\sum_Q \lambda_Q^q \Big(\frac{\omega(Q)}{\sigma(Q)}\Big) b_Q 1_Q \big)^\frac{p}{p-q} \domega \leq C^\frac{pq}{p-q} \norm{b}_{f^{\infty,1}(\omega)}^\frac{p}{p-q}.
$$
By the using an equivalent expression (Lemma \ref{lemma:eq_exp}), we have
$$
\int \big(\sum_Q \lambda_Q^q \Big(\frac{\omega(Q)}{\sigma(Q)}\Big) b_Q 1_Q \big)^\frac{p}{p-q} \domega\eqsim_{p,q}
\sum_Q \lambda_Q^q b_Q \omega(Q) \Big( \frac{1}{\sigma(Q)} \sum_{R\subseteq Q} \lambda_R^q b_R \omega(R) \Big)^{\frac{q}{p-q}}.
$$ 
By using twice the dual estimate $\sum_Q c_Q d_Q \mu(Q)\leq \norm{c}_{f^{1,\infty}(\mu)} \norm{d}_{f^{\infty,1}(\mu)}$ for the discrete Littlewood--Paley spaces (see Proposition \ref{proposition:normduality}), we obtain
\begin{equation*}
\begin{split}
&\sum_Q \lambda_Q^q b_Q \omega(Q) \Big( \frac{1}{\sigma(Q)} \sum_{R\subseteq Q} \lambda_R^q b_R \omega(R) \Big)^{\frac{q}{p-q}}\\
&\leq \norm{b}_{f^{\infty,1}(\omega)}^{\frac{q}{p-q}} \sum_Q \lambda_Q^q b_Q \omega(Q) \Big( \frac{1}{\sigma(Q)}  \int (\sup_{R\subseteq Q} \lambda_R^q 1_R )\domega \Big)^{\frac{q}{p-q}}\\
&\leq \norm{b}_{f^{\infty,1}(\omega)}^{\frac{q}{p-q}} \norm{b}_{f^{\infty,1}(\omega)} \int \sup_Q \lambda_Q^q \Big( \frac{1}{\sigma(Q)}  \int (\sup_{R\subseteq Q} \lambda_R^q 1_R )\domega \Big)^{\frac{q}{p-q}} 1_Q \domega.
\end{split}
\end{equation*}
The proof is complete.
\end{proof}

\subsection{Characterization for summation operators under the $A_\infty$ assumption}Let $p\in(1,\infty)$, $q\in(0,\infty)$, and $q<p$. 
We recall that the integral expression $I_{\sigma,\omega,p,q,\lambda}$ is defined by
\begin{equation}
\label{temp:6}
I_{\sigma,\omega,p,q,\lambda}:= \left( \int \big(\sum_Q \lambda_Q \Big(\frac{\omega(Q)}{\sigma(Q)}\Big)^{\frac{1}{q}} 1_Q \big)^{\frac{pq}{p-q}} \dsigma \right)^\frac{p-q}{pq}.
\end{equation}
In this section, we prove the following result:
\begin{proposition}[Characterization under the $A_\infty$ assumption]\label{proposition:a_infty_2} Let $\sigma$ and $\omega$ be measures that satisfy the $A_\infty$ condition with respect to each other. Let $p\in(1,\infty)$ and $q\in(0,\infty)$ be such that $q<p$. Then we have the following characterization by subranges:
\begin{itemize}
\item In the subrange $q\in(0,1]$, we have
\begin{equation}
\begin{split}
&[\omega]_{A_\infty(\sigma)}^{-\frac{1-q}{q}} I_{\sigma,\omega,p,q,\lambda}\\
&\lesssim_{p,q} \norm{T_\lambda (\cdotroomy \sigma)}_{L^p(\sigma)\to L^q(\omega)} \\
& \lesssim_{p,q} [\sigma]_{A_\infty(\omega)}^{\frac{1-q}{q}} I_{\sigma,\omega,p,q,\lambda}.
\end{split}
\end{equation}

\item In the subrange $q\in(1,\infty)$, we have
\begin{equation}
\begin{split}
&\max\{ [\sigma]_{A_\infty(\omega)}^{-\frac{q-1}{q}} I_{\sigma,\omega,p,q,\lambda}, [\omega]_{A_\infty(\sigma)}^{-\frac{1}{p}} I^*_{\sigma,\omega,p,q,\lambda}\} \\
&\lesssim_{p,q} \norm{T_\lambda (\cdotroomy \sigma)}_{L^p(\sigma)\to L^q(\omega)} \\
& \lesssim_{p,q} \min\{[\omega]_{A_\infty(\sigma)}^{\frac{q-1}{q}} I_{\sigma,\omega,p,q,\lambda}, [\sigma]_{A_\infty(\omega)}^{\frac{1}{p}} I^*_{\sigma,\omega,p,q,\lambda}\}.
\end{split}
\end{equation}

\end{itemize}
\end{proposition}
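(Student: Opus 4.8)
The plan is to reduce the two-weight norm inequality to a coefficient inequality between discrete Littlewood--Paley spaces and then exploit the factorization of Proposition~\ref{proposition:factorization} together with the Carleson multiplier characterization of Proposition~\ref{proposition:multipliers_carleson}. First, exactly as in Lemma~\ref{lemma:reformulations_summation}, the substitution $f=\sup_Q a_Q1_Q$ (to pass from the norm inequality to a coefficient inequality) together with the substitution $a_Q=\angles{f}^\sigma_Q$ and the dyadic Hardy--Littlewood maximal inequality (to pass back) show, for \emph{every} $q\in(0,\infty)$, that $\norm{T_\lambda(\cdotroomy\sigma)}_{L^p(\sigma)\to L^q(\omega)}$ is comparable, with constants depending only on $p$, to the least constant $C$ in
\[
\norm{\{\lambda_Q a_Q\}}_{f^{q,1}(\omega)}\le C\,\norm{a}_{f^{p,\infty}(\sigma)}\qquad\text{for all families }a.
\]
Writing $w_Q:=\omega(Q)/\sigma(Q)$ and $\tfrac1r:=\tfrac1q-\tfrac1p$, one reads off the identifications $I_{\sigma,\omega,p,q,\lambda}=\norm{\{\lambda_Q w_Q^{1/q}\}}_{f^{r,1}(\sigma)}$ and $I^*_{\sigma,\omega,p,q,\lambda}=\norm{\{\lambda_Q w_Q^{1/p}\}}_{f^{r,1}(\omega)}$.

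Next I establish, uniformly in $q$, the ``$I^*$-type'' bounds. Factoring $\lambda_Q a_Q=(\lambda_Q w_Q^{1/p})(w_Q^{-1/p}a_Q)$ and applying Proposition~\ref{proposition:factorization}(i) in the measure $\omega$, with $\tfrac1q=\tfrac1r+\tfrac1p$ and inner exponents $1=1+\tfrac1\infty$, gives
\[
\norm{\{\lambda_Q a_Q\}}_{f^{q,1}(\omega)}\lesssim_{p,q}\norm{\{\lambda_Q w_Q^{1/p}\}}_{f^{r,1}(\omega)}\,\norm{\{w_Q^{-1/p}a_Q\}}_{f^{p,\infty}(\omega)};
\]
the first factor is $I^*_{\sigma,\omega,p,q,\lambda}$, and the second obeys $\norm{\{w_Q^{-1/p}a_Q\}}_{f^{p,\infty}(\omega)}^p=\norm{\{(\sigma(Q)/\omega(Q))a_Q^p\}}_{f^{1,\infty}(\omega)}\le[\sigma]_{A_\infty(\omega)}\norm{a}_{f^{p,\infty}(\sigma)}^p$ by Proposition~\ref{proposition:multipliers_carleson} with the multiplier $\mu_Q=\sigma(Q)/\omega(Q)$, whose defining Carleson condition is precisely the Fujii--Wilson $A_\infty$ condition of $\sigma$ with respect to $\omega$. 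Hence $\norm{T_\lambda(\cdotroomy\sigma)}_{L^p(\sigma)\to L^q(\omega)}\lesssim_{p,q}[\sigma]_{A_\infty(\omega)}^{1/p}I^*_{\sigma,\omega,p,q,\lambda}$, and the converse $[\omega]_{A_\infty(\sigma)}^{-1/p}I^*\lesssim\norm{T_\lambda}$ follows by reversing the roles: pick via Proposition~\ref{proposition:factorization}(ii) a family $a$ nearly saturating this factorization and transfer it from $f^{p,\infty}(\omega)$ to $f^{p,\infty}(\sigma)$ by Proposition~\ref{proposition:multipliers_carleson} in the opposite direction. When $q\in(1,\infty)$, feeding these $I^*$-type bounds applied to the adjoint $T_{\{\lambda_Q w_Q\}}(\cdotroomy\omega)\colon L^{q'}(\omega)\to L^{p'}(\sigma)$ through the $L^{q'}(\omega)$-duality identity of the Remark after Proposition~\ref{proposition:ainfty} (under which the adjoint's $I^*$ equals $I_{\sigma,\omega,p,q,\lambda}$ and $[\cdot]^{1/q'}=[\cdot]^{(q-1)/q}$) produces the complementary bounds $[\sigma]_{A_\infty(\omega)}^{-(q-1)/q}I\lesssim\norm{T_\lambda}\lesssim_{p,q}[\omega]_{A_\infty(\sigma)}^{(q-1)/q}I$, and together the two families of estimates settle the subrange $q\in(1,\infty)$.

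Finally, the subrange $q\in(0,1]$ is not reachable by duality, so the stated bound $\norm{T_\lambda(\cdotroomy\sigma)}_{L^p(\sigma)\to L^q(\omega)}\lesssim_{p,q}[\sigma]_{A_\infty(\omega)}^{(1-q)/q}I_{\sigma,\omega,p,q,\lambda}$ needs a direct argument, and this is the main obstacle. I would run a dyadic stopping-time (principal cubes) decomposition of $T_\lambda(f\sigma)$ with respect to the averages $\angles{f}^\sigma_Q$: writing $\mathcal{P}$ for the principal cubes and $\pi(Q)\in\mathcal{P}$ for the minimal principal cube containing $Q$, one has $T_\lambda(f\sigma)\lesssim\sum_{P\in\mathcal{P}}\angles{f}^\sigma_P\Phi_P$ with $\Phi_P:=\sum_{Q:\,\pi(Q)=P}\lambda_Q1_Q$, and $\mathcal{P}$ is $\sigma$-sparse. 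Using the subadditivity of $t\mapsto t^q$ (valid since $q\le1$), Hölder's inequality in the variable $P\in\mathcal{P}$ with exponents $p/q$ and $p/(p-q)$, and $\sigma$-sparseness (which bounds $\big(\sum_P(\angles{f}^\sigma_P)^p\sigma(P)\big)^{q/p}$ by $\norm{f}_{L^p(\sigma)}^q$ via the maximal inequality), the matter reduces to a coefficient estimate of the schematic form
\[
\sum_{P\in\mathcal{P}}\frac{\big(\int\Phi_P^{\,q}\,\domega\big)^{p/(p-q)}}{\sigma(P)^{q/(p-q)}}\ \lesssim_{p,q}\ [\sigma]_{A_\infty(\omega)}^{\frac{p(1-q)}{p-q}}\int\Big(\sum_Q\lambda_Q w_Q^{1/q}1_Q\Big)^{r}\dsigma,
\]
which one checks by invoking the Fujii--Wilson $A_\infty$ condition to replace the $\omega$-averages $\tfrac1{\sigma(P)}\int\Phi_P^{\,q}\domega$ by the corresponding $\sigma$-quantities (it is exactly the loss in the subadditivity step that is compensated by the power $[\sigma]_{A_\infty(\omega)}^{(1-q)/q}$) and then summing the resulting tree sum by Lemma~\ref{lemma:eq_exp}. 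The matching lower bound $[\omega]_{A_\infty(\sigma)}^{-(1-q)/q}I\lesssim\norm{T_\lambda}$ follows by testing the coefficient inequality of the first step against a family built from a near-extremiser of $I$ and invoking the $A_\infty$ condition of $\omega$ with respect to $\sigma$. The hard part throughout is this last step: arranging the decomposition so that, measured against the sharp exponents appearing in $I$, the only loss is precisely $[\sigma]_{A_\infty(\omega)}^{(1-q)/q}$, i.e.\ correctly trading the $\omega$-averages of the localized $\lambda$-sums $\Phi_P$ for the single $\sigma$-integral defining $I_{\sigma,\omega,p,q,\lambda}$.
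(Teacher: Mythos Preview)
Your treatment of the subrange $q\in(1,\infty)$ is correct and essentially coincides with the paper's: both use the Carleson multiplier characterization of $A_\infty$ (Proposition~\ref{proposition:multipliers_carleson}) to transfer a discrete Littlewood--Paley norm between $\sigma$ and $\omega$, then factorize and dualize. You derive the $I^*$-bound directly and the $I$-bound via the adjoint; the paper does the reverse, but the content is the same.

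The subrange $q\in(0,1]$, however, has a genuine gap. Your stopping-time sketch reduces matters to
\[
\sum_{P\in\mathcal P}\frac{\big(\int\Phi_P^{\,q}\,\domega\big)^{p/(p-q)}}{\sigma(P)^{q/(p-q)}}\ \lesssim\ [\sigma]_{A_\infty(\omega)}^{\frac{p(1-q)}{p-q}}\,I_{\sigma,\omega,p,q,\lambda}^{\,pq/(p-q)},
\]
but you do not explain how the Fujii--Wilson condition converts the \emph{nonlinear} $\omega$-integral $\int\Phi_P^{\,q}\domega$ into a $\sigma$-quantity with only the loss $[\sigma]_{A_\infty(\omega)}^{(1-q)/q}$, nor how Lemma~\ref{lemma:eq_exp} (which needs an exponent in $(1,\infty)$) would reassemble the tree sum into the single integral defining $I$. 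You yourself flag this as ``the hard part throughout''; as written, the argument stops precisely at the point where the $A_\infty$ hypothesis must do its work.

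The paper avoids stopping cubes entirely and reuses the same Littlewood--Paley mechanism you employed for $q>1$. After your first reduction and raising to the $q$th power, the inequality reads $\norm{\{\lambda_Q^q a_Q\}}_{f^{1,1/q}(\omega)}\le C^q\norm{a}_{f^{p/q,\infty}(\sigma)}$, which by $f^{1,1/q}$--$f^{\infty,1/(1-q)}$ duality becomes the bilinear estimate
\[
\sum_Q\lambda_Q^q a_Q b_Q\,\omega(Q)\ \lesssim_{p,q}\ C^q\,\norm{a}_{f^{p/q,\infty}(\sigma)}\,\norm{b}_{f^{\infty,1/(1-q)}(\omega)}.
\]
Now the $A_\infty$ hypothesis enters exactly once, through Corollary~\ref{corollary:a_infty_via_carleson}: since $\norm{b}_{f^{\infty,1/(1-q)}}=\norm{b^{1/(1-q)}}_{f^{\infty,1}}^{\,1-q}$ is a power of a genuine Carleson norm, it transfers between $\omega$ and $\sigma$ at cost $[\,\cdot\,]_{A_\infty}^{\,1-q}$, which is precisely the stated exponent $(1-q)/q$ after taking $q$th roots. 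One then factorizes $f^{p/q,\infty}(\sigma)\cdot f^{\infty,1/(1-q)}(\sigma)=f^{p/q,1/(1-q)}(\sigma)$ and dualizes once more against $f^{p/(p-q),1/q}(\sigma)$ to recover $I_{\sigma,\omega,p,q,\lambda}$. No principal-cube decomposition is needed, and the sharp power of the $A_\infty$ constant falls out automatically.
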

\begin{remark}We recall that, in contrast to the subrange $q\in(0,1)$, in the subrange $q\in[1,\infty)$ the $L^q(\omega)-L^{q'}(\omega)$ duality is available and hence
$$
\norm{T_{\{\lambda_Q\}}(\cdotroomy \sigma)}_{L^p(\sigma)\to L^q(\omega)}=\norm{T_{\{\lambda_Q \frac{\omega(Q)}{\sigma(Q)}\}}(\cdotroomy \omega)}_{L^{q'}(\omega)\to L^{p'}(\sigma)}.
$$
Therefore, in this subrange, the characterization can be stated equivalently in terms of the dual integral expression $I^*_{\sigma,\omega,p,q,\lambda}$ defined by
$$
I^*_{\sigma,\omega,p,q,\lambda}:= \left( \int \big(\sum_Q \lambda_Q \Big(\frac{\omega(Q)}{\sigma(Q)}\Big)^{\frac{1}{p}} 1_Q \big)^{\frac{pq}{p-q}} \domega \right)^\frac{p-q}{pq}
$$
and hence we can include both of the expressions  $I_{\sigma,\omega,p,q,\lambda}$ and $I^*_{\sigma,\omega,p,q,\lambda}$ in the statement in this subrange.
\end{remark}
\begin{proof}[Proof of Proposition \ref{proposition:a_infty_2}] First, we consider the range $0<q\leq 1<p<\infty $. The case $q=1$ is trivial:  the two-weight norm inequality is written out as
$$
\int \big( \sum_Q \lambda_Q \frac{\omega(Q)}{\sigma(Q)} 1_Q \big) f\dsigma \leq C \norm{f}_{L^p(\sigma)},
$$
which by the $L^p(\sigma)-L^{p'}(\sigma)$ duality is equivalent to $\norm{\sum_Q \lambda_Q \frac{\omega(Q)}{\sigma(Q)} 1_Q}_{L^{p'}(\sigma)}\leq C$. We now assume that $q\in(0,1)$. We give a proof only for the estimate
$$
I_{\sigma,\omega,p,q,\lambda}\lesssim_{p,q} [\omega]_{A_\infty(\sigma)}^{\frac{1-q}{q}} \norm{T_\lambda (\cdotroomy \sigma)}_{L^p(\sigma)\to L^q(\omega)},
$$
since the reverse estimate can be proven in a similar way.

By duality in the discrete Littlewood--Paley norms, the two-weight norm estimate
$$
\norm{T_\lambda (f \sigma)}_{L^q(\omega)}\leq C \norm{f}_{L^p(\sigma)} \quad\text{for all functions $f$}
$$
is equivalent to the bilinear estimate
\begin{equation}\label{temp:7}
\sum_Q \lambda_Q^q a_Q b_Q \omega(Q) \lesssim_{p,q} C^q \norm{a}_{f^{\frac{p}{q},\infty}(\sigma)} \norm{b}_{f^{\infty,\frac{1}{1-q}}(\omega)} \quad\text{for all families $a$ and $b$.}
\end{equation}

By the scaling of the Littlewood--Paley norms, and by the $A_\infty$ assumption together with Proposition \ref{proposition:multipliers_carleson}, we have
\begin{equation}\label{temp:8}
\norm{b}_{f^{\infty,\frac{1}{1-q}}(\omega)}=\norm{b^{\frac{1}{1-q}}}_{f^{\infty,1}(\omega)}^{1-q}\leq [\omega]_{A_\infty(\sigma)}^{1-q} \norm{b^{\frac{1}{1-q}}}_{f^{\infty,1}(\sigma)}^{1-q}= [\omega]_{A_\infty(\sigma)}^{1-q}\norm{b}_{f^{\infty,\frac{1}{1-q}}(\sigma)}.
\end{equation}
Substituting this estimate \eqref{temp:8} into estimate \eqref{temp:7}, we obtain
\begin{equation}
\label{temp:9}\sum_Q \lambda_Q^q a_Q b_Q \omega(Q) \lesssim_{p,q} [\omega]_{A_\infty(\sigma)}^{1-q} C^q \norm{a}_{f^{\frac{p}{q},\infty}(\sigma)}\norm{b}_{f^{\infty,\frac{1}{1-q}}(\sigma)} \quad\text{for all $a$ and $b$.}
\end{equation}

By the factorization $f^{\frac{p}{q},\infty}(\sigma) \cdot f^{\infty,\frac{1}{1-q}}(\sigma)= f^{\frac{p}{q},\frac{1}{1-q}}(\sigma)$ (see Proposition \ref{proposition:factorization}), the following assertions hold: 
\begin{itemize}
\item For every $a$ and $b$, $$\norm{ab}_{f^{\frac{p}{q},\frac{1}{1-q}}(\sigma)}\lesssim_{p,q} \norm{a}_{f^{\frac{p}{q},\infty}(\sigma)}\norm{b}_{f^{\infty,\frac{1}{1-q}}(\sigma)}.$$
\item For every $c\in f^{\frac{p}{q},\frac{1}{1-q}}(\sigma)$ there exist $a\in f^{\frac{p}{q},\infty}(\sigma)$ and $b\in f^{\infty,\frac{1}{1-q}}(\sigma)$ such that $c=ab$ and 
$$ \norm{a}_{f^{\frac{p}{q},\infty}(\sigma)}\norm{b}_{f^{\infty,\frac{1}{1-q}}(\sigma)}\lesssim_{p,q} \norm{c}_{f^{\frac{p}{q},\frac{1}{1-q}}(\sigma)}.$$
\end{itemize} 
By these assertions, estimate \eqref{temp:8} is equivalent to the estimate
\begin{equation}
\label{temp:17}
\sum_Q \lambda_Q^q c_Q \omega(Q) \lesssim_{p,q} [\omega]_{A_\infty(\sigma)}^{1-q} C^q \norm{c}_{f^{\frac{p}{q},\frac{1}{1-q}}(\sigma)} \quad\text{for all families $c$}.
\end{equation}

By the duality $(f^{\frac{p}{q},\frac{1}{1-q}}(\sigma))^*= f^{\frac{p}{p-q},\frac{1}{q}}(\sigma)$ in the discrete Littlewood--Paley spaces, estimate \eqref{temp:17} is equivalent to the estimate
$$
\left( \int_Q \big( \lambda_Q  \Big( \frac{\omega(Q)}{\sigma(Q)}\Big)^{\frac{1}{q}} 1_Q \big)^\frac{pq}{p-q} \dsigma \right)^{\frac{p-q}{p}} \lesssim_{p,q} [\omega]_{A_\infty(\sigma)}^{1-q} C^q.
$$

Next, we consider the range $1<q<p<\infty$. We write the proof only for the estimate
$$ I_{\sigma,\omega,p,q,\lambda}\lesssim_{p,q}  [\sigma]_{A_\infty(\omega)}^{\frac{q-1}{q}}  \norm{T_\lambda (\cdotroomy \sigma)}_{L^p(\sigma)\to L^q(\omega)},
$$
as the reverse estimate and the dual estimates (with $q'$ and $p$, $\frac{\omega(Q)}{\sigma(Q)}\lambda_Q$ and $\lambda_Q$, and $\omega$ and $\sigma$ interchanged) can be proven similarly.

The two-weight norm inequality \eqref{sum} is equivalent  (see Lemma \ref{lemma:reformulations_summation} for the proof of this) to the bilinear estimate
\begin{equation}\label{temp:d1}
\sum_Q \lambda_Q a_Q b_Q \omega(Q)\leq C \norm{a}_{f^{p,\infty}(\sigma)}  \norm{b}_{f^{q',\infty}(\omega)}. 
\end{equation}
By the scaling of the Littlewood--Paley norms, and by the $A_\infty$ assumption together with Proposition \ref{proposition:multipliers_carleson}, we have
\begin{equation}\label{temp:d2}
\begin{split}
&\norm{b}_{f^{q',\infty}(\omega)}=
\norm{\{b_Q^{q'}\}}_{f^{1,\infty}(\omega)}^{\frac{1}{q'}}= \\
&\norm{\{b_Q^{q'} \frac{\omega(Q)}{\sigma(Q)}  \cdot \frac{\sigma(Q)}{\omega(Q)} \}}_{f^{1,\infty}(\omega)}^{\frac{1}{q'}}\\
&\leq [\sigma]_{A_\infty(\omega)}^{\frac{1}{q'}} \norm{\{ b_Q^{q'} \frac{\omega(Q)}{\sigma(Q)}\}}_{f^{1,\infty}(\sigma)}^{\frac{1}{q'}}= [\sigma]_{A_\infty(\omega)}^{\frac{1}{q'}} \norm{\{ b_Q \big(\frac{\omega(Q)}{\sigma(Q)}\big)^{\frac{1}{q'}}\}}_{f^{q',\infty}(\sigma)}.
\end{split}
\end{equation}
Combining estimate \eqref{temp:d2} with estimate \eqref{temp:d1} and writing $\tilde{b}_Q:=b_Q \big(\frac{\omega(Q)}{\sigma(Q)}\big)^{\frac{1}{q'}}$, we obtain
\begin{equation}
\label{temp:d3}
\sum_Q \lambda_Q a_Q \tilde{b}_Q \big( \frac{\omega(Q)}{\sigma(Q)}\big)^{\frac{1}{q}} \sigma(Q) \leq C  [\sigma]_{A_\infty(\omega)}^{\frac{1}{q'}} \norm{a}_{f^{p,\infty} (\sigma)}  \norm{\tilde{b}}_{f^{q',\infty}(\sigma)}.
\end{equation}

We define the exponent $r\in(1,\infty)$ by setting $\frac{1}{r'}:=\frac{1}{p}+\frac{1}{q'}$. Thus $r=\frac{pq}{p-q}$. By the factorization $f^{p,\infty}(\sigma) \cdot f^{q',\infty}(\sigma)= f^{r',\infty}(\sigma)$, the following assertions hold: 
\begin{itemize}
\item For every $a$ and $b$, $$\norm{ab}_{f^{r',\infty}(\sigma)}\lesssim_{p,q} \norm{a}_{f^{p,\infty}(\sigma)}\norm{b}_{f^{q',\infty}(\sigma)}.$$
\item For every $c\in f^{r',\infty}(\sigma)$ there exist $a\in f^{p,\infty}(\sigma)$ and $b\in f^{q',\infty }(\sigma)$ such that $c=ab$ and 
$$ \norm{a}_{f^{p,\infty}(\sigma)}\norm{b}_{f^{q',\infty}(\sigma)}\lesssim_{p,q} \norm{c}_{f^{r',\infty}(\sigma)}.$$
\end{itemize} 
By these assertions, estimate \eqref{temp:d3} is equivalent to the estimate
\begin{equation*}
\sum_Q \lambda_Q\big( \frac{\omega(Q)}{\sigma(Q)}\big)^{\frac{1}{q}} c_Q \sigma(Q)  \leq C  [\sigma]_{A_\infty(\omega)}^{\frac{1}{q'}}  \norm{c}_{f^{r',\infty}(\sigma)}.
\end{equation*}
By the $f^{r',\infty}(\sigma) - f^{r,1}(\sigma)$ duality, this estimate is equivalent to the estimate
\begin{equation*}
\norm{ \{ \lambda_Q \big( \frac{\omega(Q)}{\sigma(Q)}\big)^{\frac{1}{q}}\}}_{f^{r,\infty}(\sigma)} \leq C  [\sigma]_{A_\infty(\omega)}^{\frac{1}{q'}}.
\end{equation*}
The proof is complete.
\end{proof}

\subsection{Inequality for summation operators via  maximal operators}\label{subsection:proof_connection}We recall that the auxiliary quantity $\Lambda_Q=\Lambda^{\summation}_Q$ is defined by $$\Lambda^{\summation}_Q:= \frac{1}{\omega(Q)}\sum_{R\subseteq Q} \lambda_R \omega(R).$$
In this section, we prove the following result:
\begin{proposition}[Characterization 
for summation operators in terms of 
maximal operators] Let $1<q<p<\infty$. 
Then
\begin{equation*}
\begin{split}
&\norm{T_{\{\lambda_Q\}}(\cdotroomy \sigma )}_{L^p(\sigma)\to L^q(\omega)}\\
&\eqsim_{p,q} \norm{M_{\{\Lambda^{\summation}_Q\}}(\cdotroomy \sigma)}_{L^p(\sigma)\to L^q(\omega)} +  \norm{M_{\{\frac{\omega(Q)}{\sigma(Q)}\Lambda^{\summation}_Q \} }(\cdotroomy \omega)}_{L^{q'}(\omega)\to L^{p'}(\sigma)}.
\end{split}
\end{equation*} 

\end{proposition}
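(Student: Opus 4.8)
The plan is to transfer everything to the bilinear reformulations of Lemma~\ref{lemma:reformulations_summation} and Lemma~\ref{lemma:reformulations_supremum} and to combine them through the telescoping identity
$$
\lambda_S\,\omega(S)=\Lambda_S\,\omega(S)-\sum_{S'\in\child(S)}\Lambda_{S'}\,\omega(S'),
$$
where $\Lambda_S=\Lambda_S^{\summation}$, which is immediate from $\Lambda_S\,\omega(S)=\sum_{R\subseteq S}\lambda_R\,\omega(R)$ (the children partition the proper subcubes of $S$). As in the other proofs of this section, by the monotone convergence theorem and a routine limiting argument I may assume that all the families below are finitely supported, so that every sum is finite; with the convention that a term indexed by a nonexistent dyadic parent equals $0$, the identity then yields
$$
\sum_P\lambda_P\,\omega(P)\,h_P=\sum_P\Lambda_P\,\omega(P)\,\big(h_P-h_{\hat P}\big)\qquad\text{for every family }\{h_P\},
$$
with $\hat P$ denoting the dyadic parent of $P$.

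For the ``$\lesssim$'' half I would use reformulation~\ref{temp:a4} of Lemma~\ref{lemma:reformulations_summation}: it suffices to bound $\sum_P\lambda_P\omega(P)A_PB_P$, where $A_P:=\sum_{Q\supseteq P}\tilde a_Q$ and $B_P:=\sum_{R\supseteq P}\tilde b_R$. Apply the identity with $h_P:=A_PB_P$; since $A_P=A_{\hat P}+\tilde a_P$ and $B_P=B_{\hat P}+\tilde b_P$, one has $A_PB_P-A_{\hat P}B_{\hat P}=A_P\tilde b_P+\tilde a_PB_P-\tilde a_P\tilde b_P\le A_P\tilde b_P+\tilde a_PB_P$, hence
$$
\sum_P\lambda_P\omega(P)A_PB_P\le\sum_P\big(\Lambda_PA_P\big)\,\tilde b_P\,\omega(P)+\sum_P\Big(\tfrac{\omega(P)}{\sigma(P)}\Lambda_PB_P\Big)\tilde a_P\,\sigma(P).
$$
Now the key move: estimate the first sum by the $f^{q,\infty}(\omega)$--$f^{q',1}(\omega)$ duality of Proposition~\ref{proposition:normduality}, testing $\{\Lambda_PA_P\}$ in its \emph{supremum} norm; this produces $\norm{\sup_P\Lambda_PA_P1_P}_{L^q(\omega)}\norm{\sum_R\tilde b_R1_R}_{L^{q'}(\omega)}$, and the first factor is $\lesssim\norm{M_{\{\Lambda_Q\}}(\cdotroomy\sigma)}\,\norm{\sum_Q\tilde a_Q1_Q}_{L^p(\sigma)}$ by reformulation~(iii) of Lemma~\ref{lemma:reformulations_supremum} applied to $M_{\{\Lambda_Q\}}(\cdotroomy\sigma)$. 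Symmetrically, the second sum is $\le\norm{\sup_P\tfrac{\omega(P)}{\sigma(P)}\Lambda_PB_P1_P}_{L^{p'}(\sigma)}\norm{\sum_Q\tilde a_Q1_Q}_{L^p(\sigma)}$ by $f^{p',\infty}(\sigma)$--$f^{p,1}(\sigma)$ duality, and the first factor is $\lesssim\norm{M_{\{\frac{\omega(Q)}{\sigma(Q)}\Lambda_Q\}}(\cdotroomy\omega)}\,\norm{\sum_R\tilde b_R1_R}_{L^{q'}(\omega)}$ by reformulation~(iii) of Lemma~\ref{lemma:reformulations_supremum} for $M_{\{\frac{\omega(Q)}{\sigma(Q)}\Lambda_Q\}}(\cdotroomy\omega)\colon L^{q'}(\omega)\to L^{p'}(\sigma)$. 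Feeding this back through the equivalence \ref{temp:a1}$\Leftrightarrow$\ref{temp:a4} of Lemma~\ref{lemma:reformulations_summation} gives the ``$\lesssim$'' half.

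For the reverse inequalities it is enough to prove $\norm{M_{\{\Lambda_Q\}}(\cdotroomy\sigma)}\lesssim_{p,q}\norm{T_\lambda(\cdotroomy\sigma)}$ and its analogue for $M_{\{\frac{\omega(Q)}{\sigma(Q)}\Lambda_Q\}}(\cdotroomy\omega)$, then add. By reformulation~(iii) of Lemma~\ref{lemma:reformulations_supremum} it suffices to bound $\norm{\sup_P\Lambda_PA_P1_P}_{L^q(\omega)}$ with $A_P=\sum_{Q\supseteq P}\tilde a_Q$; by the $f^{q,\infty}(\omega)$--$f^{q',1}(\omega)$ duality this reduces to bounding $\sum_P\Lambda_PA_Pd_P\,\omega(P)$ for a fixed $d\ge0$ with $\norm{\sum_Pd_P1_P}_{L^{q'}(\omega)}\le1$. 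Expanding $\Lambda_P\omega(P)=\sum_{R\subseteq P}\lambda_R\omega(R)$, swapping the order of summation, and using that $A_P\le A_R$ whenever $R\subseteq P$, one obtains
$$
\sum_P\Lambda_PA_Pd_P\,\omega(P)=\sum_R\lambda_R\omega(R)\sum_{P\supseteq R}A_Pd_P\le\sum_R\lambda_R\omega(R)\,A_R\Big(\sum_{P\supseteq R}d_P\Big),
$$
whose right-hand side is exactly the left-hand side of reformulation~\ref{temp:a4} of Lemma~\ref{lemma:reformulations_summation} with $\tilde b$ replaced by $d$, hence $\lesssim_{p,q}\norm{T_\lambda(\cdotroomy\sigma)}\,\norm{\sum_Q\tilde a_Q1_Q}_{L^p(\sigma)}$. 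The analogue for the $\omega$-side operator follows in the same way, dualizing on $L^{p'}(\sigma)$ and replacing $\tilde a$ by the dual family.

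The step I expect to be the real point is the choice of pairing in the Littlewood--Paley duality after telescoping: the factor $\Lambda_PA_P$ (and its dual analogue $\tfrac{\omega(P)}{\sigma(P)}\Lambda_PB_P$) must be tested against the $f^{\bullet,1}$ norm of the complementary family, so that it is measured in an $f^{\bullet,\infty}$ norm, i.e.\ as a genuine maximal quantity; pairing it the other way would force an $f^{\bullet,1}$ (summation) norm of $\Lambda_PA_P$, which is exactly what one has surrendered by passing to the maximal operators. Everything else is bookkeeping with the reformulation lemmas, the telescoping identity, and the elementary estimate $A_PB_P-A_{\hat P}B_{\hat P}\le A_P\tilde b_P+\tilde a_PB_P$.
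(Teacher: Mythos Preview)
Your argument is correct, and after the initial device you land on the very same intermediate quantities as the paper, namely $\sum_P \Lambda_P\,\omega(P)\,A_P\,\tilde b_P$ and its symmetric counterpart $\sum_P \Lambda_P\,\omega(P)\,\tilde a_P\,B_P$, followed by the same $f^{q,\infty}$--$f^{q',1}$ (resp.\ $f^{p',\infty}$--$f^{p,1}$) pairing and the same appeal to Lemma~\ref{lemma:reformulations_supremum}. The difference is in how you reach those sums. The paper does not telescope; it simply observes that in $\sum_{Q,R\supseteq P} a_Q b_R$ one of $Q\supseteq R$ or $R\supseteq Q$ must hold by dyadic nestedness, and splits accordingly. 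Summing first in $P\subseteq R$ produces $\sum_{P\subseteq R}\lambda_P\omega(P)=\Lambda_R\,\omega(R)$, and (temp:b4) and (temp:b5) fall out directly. Because this splitting is an \emph{equivalence} up to a factor of~$2$ (the diagonal $Q=R$ is counted twice), the paper gets both directions of the comparison at once: (temp:b3) holds iff (temp:b4) and (temp:b5) both hold, and each of these is, by one duality step, the reformulation~(iii) of Lemma~\ref{lemma:reformulations_supremum} for the corresponding maximal operator.

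What your route buys is a pleasant structural identity $\sum_P\lambda_P\omega(P)h_P=\sum_P\Lambda_P\omega(P)(h_P-h_{\hat P})$ that could be reused elsewhere; what it costs is that the telescoping only gives ``$\lesssim$'' (through the estimate $A_PB_P-A_{\hat P}B_{\hat P}\le A_P\tilde b_P+\tilde a_PB_P$), so you must prove ``$\gtrsim$'' by a separate expansion --- which, amusingly, is exactly the paper's nestedness computation run in reverse. The paper's single splitting is therefore the more economical packaging, but your proof is complete and correct as written.
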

\begin{proof} By Lemma \ref{lemma:reformulations_summation}, the two-weight norm inequality
$$
\norm{\sum_P \lambda_P \angles{f}^\sigma_P 1_P }_{L^q(\omega)}\lesssim_{p,q} C \norm{f}_{L^p(\sigma)}
$$
 is equivalent to the estimate
\begin{equation}\label{temp:b3}
\sum_P \lambda_P \omega(P) \big(\sum_{Q\supseteq P} a_Q \big)\big(\sum_{R\supseteq P} b_R \big) \lesssim \norm{\sum_Q a_Q 1_Q}_{L^p(\sigma)}\norm{\sum_R b_R 1_R}_{L^{q'}(\omega)}.
\end{equation}

Since $Q\cap R\supseteq P$, by dyadic nestedness, we have either $R\subseteq Q$ or $Q\subseteq R$. Hence, the summation  splits into the cases $P \subseteq R \subseteq Q$ and  $P\subseteq Q \subseteq R$. Therefore,  estimate \eqref{temp:b3} is equivalent to the pair of estimates
\begin{subequations}
\begin{align}
\label{temp:b4}&  \sum_R b_R \big(\sum_{Q\supseteq R} a_R\big) \big( \sum_{P\subseteq R} \lambda_P \omega(P)  \big) \lesssim \norm{\sum_Q a_Q 1_Q}_{L^p(\sigma)}\norm{\sum_R b_R 1_R}_{L^{q'}(\omega)}, 
 \\
\label{temp:b5}& \sum_Q a_Q \big(\sum_{R\supseteq Q} b_R\big) \big( \sum_{P\subseteq Q} \lambda_P \omega(P) \big)\lesssim \norm{\sum_Q a_Q 1_Q}_{L^p(\sigma)}\norm{\sum_R b_R 1_R}_{L^{q'}(\omega)}.
  \end{align}
\end{subequations}

We handle only  subestimate \eqref{temp:b4}, as the other subestimate \eqref{temp:b5} can be handled similarly. By the $f^{q',1}(\omega)-f^{q,\infty}(\omega)$ duality in the discrete Littlewood--Paley spaces, subestimate \eqref{temp:b4} is equivalent to the estimate
$$
\norm{\sup_R \big(\sum_{Q\supseteq R} a_R\big) \big( \frac{1}{\omega(R)} \sum_{P\subseteq R} \lambda_P \omega(P) \big) }_{L^q(\omega)} \lesssim_{q} \norm{\sum_Q a_Q 1_Q}_{L^p(\sigma)}.
$$
By Lemma \ref{lemma:reformulations_supremum}, this estimate is equivalent to the two-weight norm inequality
$$
\norm{\sup_Q \Lambda_Q \angles{f}^\sigma_Q 1_Q}_{L^q(\omega)}\lesssim_{p,q}   \norm{f}_{L^p(\sigma)}.
$$
The proof is complete.
\end{proof}


\begin{thebibliography}{10}

  \bibitem{adams1996}   
  D.~R. Adams and L.~I. Hedberg.
   \newblock Function spaces and potential theory. 
  \newblock {\em Grundlehren der math. Wissenschaften}, 314. Berlin--Heidelberg--New York, Springer, 1996.
  
  \bibitem{cao2017}
Dat Tien Cao and Igor E. Verbitsky. 
\newblock  Nonlinear elliptic equations and intrinsic potentials of Wolff type. 
\newblock {\em J. Funct. Anal.}, 272(1):112--165, 2017. 

\bibitem{cascante2017}
Carme Cascante and Joaquin~M. Ortega.
\newblock Two-weight norm inequalities for vector-valued operators.
\newblock {\em Mathematika}, 63(1):72--91, 2017.

\bibitem{cascante2004}
Carme Cascante, Joaquin~M. Ortega, and Igor~E. Verbitsky.
\newblock Nonlinear potentials and two weight trace inequalities for general
  dyadic and radial kernels.
\newblock {\em Indiana Univ. Math. J.}, 53(3):845--882, 2004.

\bibitem{cascante2006}
Carme Cascante, Joaquin~M. Ortega, and Igor~E. Verbitsky.
\newblock On {$L^p$}-{$L^q$} trace inequalities.
\newblock {\em J. London Math. Soc. (2)}, 74(2):497--511, 2006.

\bibitem{cohn2000}
W.~S. Cohn and I.~E. Verbitsky.
\newblock Factorization of tent spaces and {H}ankel operators.
\newblock {\em J. Funct. Anal.}, 175(2):308--329, 2000.

\bibitem{coifman1974}
 R. Coifman and C. Fefferman
\newblock Weighted norm inequalities for maximal functions and singular integrals.
\newblock {\em Studia Math.}, 51(3):241--250, 1974.


\bibitem{frazier1990}
Michael Frazier and Bj\"{o}rn Jawerth.  
\newblock A discrete transform and decompositions of distribution spaces.  
\newblock {\em J. Funct. Anal.},  93(1):34--170, 1990.

\bibitem{fujii1978}
Nobuhiko Fujii.
\newblock Weighted bounded mean oscillation and singular integrals.
\newblock {\em Math. Japon.}, 22(5):529--534, 1977/78.


\bibitem{hanninen2017}
Timo~S. H\"anninen.
\newblock Two-weight inequality for operator-valued positive dyadic operators
  by parallel stopping cubes.
\newblock {\em Israel J. Math.}, 219(1):71--114, 2017.


\bibitem{hanninen2017b}
Timo~S. H\"anninen.
\newblock Equivalence of sparse and Carleson coefficients for general sets. 
\newblock To appear in Arkiv f\"or Matematik. Preprint arXiv:1709.10457 [math.CA].


\bibitem{hanninen2015}
Timo~S. H\"anninen.
\newblock {\em Dyadic analysis of integral operators: median oscillation
  decomposition and testing conditions}.
\newblock Ph.D. thesis, University of Helsinki, 2015.
\newblock Available at \url{http://urn.fi/URN:ISBN:978-951-51-1393-1}.

\bibitem{hanninen2016}
Timo~S. H\"anninen, Tuomas~P. Hyt\"onen, and Kangwei Li.
\newblock Two-weight {$L^p$}-{$L^q$} bounds for positive dyadic operators:
  unified approach to {$p\leq q$} and {$p>q$}.
\newblock {\em Potential Anal.}, 45(3):579--608, 2016.


\bibitem{hanninen2017a}
Timo~S. H\"anninen and Igor~E. Verbitsky.
\newblock Two-weight $L^p\to L^q$ bounds for positive dyadic operators in the case $0<q< 1 \le  p<\infty$. 
\newblock To appear in Indiana Univ. Math. J. Preprint  arXiv:1706.08657 [math.CA].


\bibitem{hytonen2014}
Tuomas~P. Hyt\"onen.
\newblock The {$A_2$} theorem: remarks and complements.
\newblock In {\em Harmonic analysis and partial differential equations}, {\em Contemp. Math.},  vol.~612,  91--106. Amer. Math. Soc., Providence, RI,
  2014.
\newblock arXiv:1212.3840 [math.CA].

\bibitem{kuusi2014} T.~Kuusi and G. Mingione.
\newblock 
\newblock Guide to nonlinear potential estimates.  
\newblock {\em Bull. Math. Sci.}, 4, 1--82, 2014. 

\bibitem{lacey2009}
Michael~T. Lacey, Eric~T. Sawyer, and Ignacio Uriarte-Tuero.
\newblock Two weight inequalities for discrete positive operators.
\newblock 2009.
\newblock Preprint  arXiv:0911.3437 [math.CA].

\bibitem{lai2015}
Jingguo Lai.
\newblock A new two weight estimates for a vector-valued positive operator.
\newblock 2015.
\newblock Preprint arXiv:1503.06778 [math.CA].

\bibitem{lerner2015}
Andrei~K. Lerner and Fedor Nazarov.
\newblock Intuitive dyadic calculus: the basics. {B}ook.
\newblock 2015.
\newblock Preprint arXiv:1508.05639 [math.CA].

\bibitem{maurey1974}
Bernard Maurey.
\newblock Th\'eor\`emes de factorisation pour les op\'erateurs lin\'eaires
  \`a valeurs dans les espaces {$L^{p}$}.
\newblock Soci\'et\'e Math\'ematique de France, Paris. 
\newblock {\em Ast\'erisque}, 11, 1974.

\bibitem{mazya2011}
Vladimir G. Maz'ya.
 \newblock Sobolev spaces, with applications to elliptic partial differential equations, 2nd augmented ed.
  \newblock   {\em Grundlehren der math. Wissenschaften}, 342. Springer, Berlin, 2011.

\bibitem{nazarov1999}
F.~Nazarov, S.~Treil, and A.~Volberg.
\newblock The {B}ellman functions and two-weight inequalities for {H}aar
  multipliers.
\newblock {\em J. Amer. Math. Soc.}, 12(4):909--928, 1999.

\bibitem{pisier1986}
Gilles Pisier.
\newblock Factorization of operators through {$L_{p\infty}$} or {$L_{p1}$} and
  noncommutative generalizations.
\newblock {\em Math. Ann.}, 276(1):105--136, 1986.

\bibitem{quinn2017}
Stephen Quinn and Igor E. Verbitsky. 
\newblock A sublinear version of Schur's lemma and elliptic PDE. 
\newblock {\em Analysis \& PDE}, 11(2): 439--466, 2018.

\bibitem{sawyer1992}
E.~Sawyer and R.~L. Wheeden.
\newblock Weighted inequalities for fractional integrals on {E}uclidean and
  homogeneous spaces.
\newblock {\em Amer. J. Math.}, 114(4):813--874, 1992.

\bibitem{scurry2013}
James Scurry.
\newblock A characterization of two-weight inequalities for a vector-valued
  operator.
\newblock 2013.
\newblock Preprint arXiv:1007.3089 [math.CA].

\bibitem{tanaka2014}
Hitoshi Tanaka.
\newblock A characterization of two-weight trace inequalities for positive
  dyadic operators in the upper triangle case.
\newblock {\em Potential Anal.}, 41(2):487--499, 2014.

\bibitem{tanaka2015}
Hitoshi Tanaka.
\newblock The trilinear embedding theorem.
\newblock {\em Studia Math.}, 227(3):239--248, 2015.

\bibitem{tanaka2016}
Hitoshi Tanaka.
\newblock The {$n$} linear embedding theorem.
\newblock {\em Potential Anal.}, 44(4):793--809, 2016.

\bibitem{treil2015}
Sergei Treil.
\newblock A remark on two weight estimates for positive dyadic operators.
\newblock In {\em Operator-related function theory and time-frequency
  analysis},  {\em Abel Symp.}, vol.~9,  185--195. Springer, Cham,
  2015.
\newblock arXiv:1201.1455 [math.CA].

\bibitem{verbitsky1992}
Igor E. Verbitsky.
\newblock Weighted norm inequalities for maximal operators and Pisier's
     theorem on factorization through $L^{p, \infty}$.
  \newblock {\em Int.  Eq. Oper. Theory}, 15(1):124--153, 1992.  

\bibitem{verbitsky1996}
Igor E. Verbitsky.
\newblock Imbedding and multiplier theorems for discrete Littlewood-Paley
spaces.
\newblock {\em Pacific J. Math.}, 176(2):529--556, 1996.

\bibitem{verbitsky2017}
Igor E. Verbitsky. 
\newblock Sublinear equations and Schur's test for integral operators. 
\newblock In {\em 50 Years with Hardy Spaces, a Tribute to Victor Havin}, 
Eds. A. Baranov et al., Birkh\"{a}user, Ser. Operator Theory: Adv. Appl., 
 vol.~261, 467--484, 2018.  


\end{thebibliography}
\end{document}